\newtheorem{teo}{Theorem}[section]
\newtheorem{thm}[teo]{Theorem}
\newtheorem{prop}[teo]{Proposition}
\newtheorem{lemma}[teo]{Lemma}
\newtheorem{cor}[teo]{Corollary}
\newtheorem{conj}[teo]{Conjecture}
\newtheorem{defn}[teo]{Definition}
\newtheorem{rmk}[teo]{Remark}
\newtheorem*{claim}{Claim}
\newtheorem{def-prop}[teo]{Definition-Proposition}
\newtheorem{notation}[teo]{Notation}
\numberwithin{equation}{section}
  \newcommand{\B}{\mathbb{B}}
  \newcommand{\C}{\mathbb{C}}
  \newcommand{\G}{\mathbb{G}}
  \renewcommand{\H}{\mathbb{H}}  
  \newcommand{\N}{\mathbb{N}}
  \newcommand{\Q}{\mathbb{Q}}
  \newcommand{\R}{\mathbb{R}}  
  \renewcommand{\S}{\mathbb{S}}
  \newcommand{\Z}{\mathbb{Z}}
  \renewcommand{\epsilon}{\varepsilon}
  \renewcommand{\cong}{\simeq}
  \renewcommand{\bar}{\overline}
  \renewcommand{\tilde}{\widetilde}
  \renewcommand{\hat}{\widehat}
  \providecommand{\frac}[1]{\operatorname{Frac}(#1)}
  \renewcommand{\hom}{\operatorname{Hom}}
  \newcommand{\rank}{\operatorname{rank}}
  \newcommand{\gal}{\operatorname{Gal}}
  \newcommand{\MT}{\operatorname{MT}}
  \newcommand{\GL}{\operatorname{GL}}
  \renewcommand{\lim}{\operatorname{lim}}
  \renewcommand{\deg}{\operatorname{deg}}
  \newcommand{\aut}{\operatorname{Aut}}
  \newcommand{\End}{\operatorname{End}}
  \newcommand{\Int}{\operatorname{int}}
  \newcommand{\sm}{\mathrm{sm}}
  \newcommand{\Zar}{\mathrm{Zar}}
  \newcommand{\GSp}{\mathrm{GSp}}
  \newcommand{\res}{\mathrm{Res}}
  \newcommand{\unif}{\mathrm{unif}}
\newcommand{\cA}{\mathcal{A}}
\newcommand{\cB}{\mathcal{B}}
\newcommand{\cC}{\mathcal{C}}
\newcommand{\cF}{\mathcal{F}}
\newcommand{\cM}{\mathcal{M}}
\newcommand{\cR}{\mathcal{R}}
\newcommand{\cX}{\mathcal{X}}
\newcommand{\cY}{\mathcal{Y}}
\newcommand{\upperarrow}[1]{
\setlength{\unitlength}{0.03\DiagramCellWidth}
\begin{picture}(0,0)(0,0)
\linethickness{1pt} 
\bezier{15}(-46,9)(-24,23)(-2,9)
\put(-24,19){\makebox(0,0)[b]{$\scriptstyle {#1}$}}
\put(-46.6,8.7){\vector(-2,-1){0}}
\end{picture}
}
\newcommand{\upperarrowtwo}[1]{
\setlength{\unitlength}{0.03\DiagramCellWidth}
\begin{picture}(0,0)(0,0)
\linethickness{1pt} 
\bezier{15}(-40,9)(-21,23)(-2,9)
\put(-21,19){\makebox(0,0)[b]{$\scriptstyle {#1}$}}
\put(-40.6,8.7){\vector(-2,-1){0}}
\end{picture}
}
\newcommand\supervisor[1]{\def\@supervisor{#1}}
\newcounter{elno}
\renewcommand{\cong}{\simeq}
\begin{document}
\title[The Andr\'{e}-Pink-Zannier conjecture]{A special point problem of Andr\'{e}-Pink-Zannier in the universal family of abelian varieties}
\author{Ziyang Gao}
\address{Institut des Hautes \'{E}tudes Scientifiques, 
Le Bois-Marie 35, route de Chartres, 
91440 Bures-sur-Yvette,
France}
\email{ziyang.gao@math.u-psud.fr}
\subjclass[2000]{11G18, 14G35}
\maketitle

\begin{abstract} The Andr\'{e}-Pink-Zannier conjecture predicts that a subvariety of a mixed Shimura variety is weakly special if its intersection with the generalized Hecke orbit of a given point is Zariski dense. It is part of the Zilber-Pink conjecture. In this paper we focus on the universal family of principally polarized abelian varieties. We explain the moduli interpretation of the Andr\'{e}-Pink-Zannier conjecture in this case and prove several different cases for this conjecture: its overlap with the Andr\'{e}-Oort conjecture; when the subvariety is contained in an abelian scheme over a curve and the point is a torsion point on its fiber; when the subvariety is a curve.
\end{abstract}


\tableofcontents

\section{Introduction}

Consider $[\pi]\colon\mathfrak{A}_g(N)\rightarrow\cA_g(N)$, the universal family of principally polarized abelian varieties of dimension $g$ with level-$N$-structure over a fine moduli space. For simplicity we drop the ``$(N)$" in the notation. The variety $\mathfrak{A}_g$ is an example of a mixed Shimura variety which is not pure. For general theory of mixed Shimura varieties, we refer to \cite{PinkThesis}. An interesting Diophantine problem related to mixed Shimura varieties is the Zilber-Pink conjecture, which concerns unlikely intersections in mixed Shimura varieties. In order to study this conjecture, Pink defined in \cite[Definition 4.1]{PinkA-Combination-o} \textbf{weakly special subvarieties} of mixed Shimura varieties. In $\mathsection$\ref{SectionWeaklySpecialSubvariety}, we shall discuss weakly special subvarieties of $\mathfrak{A}_g$. In particular we dispose of the following geometric description for weakly special subvarieties of $\mathfrak{A}_g$: let $Y$ be any irreducible subvariety of $\mathfrak{A}_g$, it is then a subvariety of $[\pi]^{-1}([\pi]Y)$ with the latter being an abelian scheme over $[\pi]Y$, whose isotrivial part we denote by $\cC$. Then we have (for proof see $\mathsection$\ref{GeometricInterpretationWeaklySpecial})

\begin{prop}\label{DescriptionWeaklySpecial}
An irreducible subvariety $Y$ of $\mathfrak{A}_g$ is weakly special iff the following holdspolarized:
\begin{enumerate}
\item $[\pi]Y$ is a totally geodesic subvariety of $\cA_g$;
\item $Y$ is the translate of an abelian subscheme of $[\pi]^{-1}([\pi]Y)$ (over $[\pi]Y$) by a torsion section and then by a section of $\cC\rightarrow[\pi]Y$.
\end{enumerate}
Moreover, this holds for any connected Shimura variety of Kuga type $S$ (i.e. mixed Shimura varieties with trivial weight $-2$ part), in which case the ``$\cA_g$" in (1) should be replaced by the pure part of $S$. See the forthcoming dissertation \cite[Section 2.2]{GaoThesis}.
\end{prop}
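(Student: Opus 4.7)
The plan is to work directly from Pink's definition \cite[Def.~4.1]{PinkA-Combination-o} applied to the mixed Shimura datum $(P_{2g},\mathcal{X}_{2g})$ attached to $\mathfrak{A}_g$, where $P_{2g}\cong V_{2g}\rtimes\GSp_{2g}$. Recall that a weakly special subvariety $Y$ is a component of the image in $\mathfrak{A}_g$ of a sub-mixed Shimura datum $(Q,\mathcal{Y})\subset(P_{2g},\mathcal{X}_{2g})$ together with a normal subgroup $N\triangleleft Q^{\mathrm{der}}$. The two conditions of the proposition will be obtained by projecting via $[\pi]$ onto the pure part and then analyzing the vertical fiber direction.

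For (1): the morphism $[\pi]$ is induced by the quotient of Shimura data $P_{2g}\twoheadrightarrow\GSp_{2g}$, so $[\pi]Y$ is the image under a morphism of Shimura varieties of a weakly special subvariety. It is therefore weakly special in the pure Shimura variety $\cA_g$, and by the classical characterization of weakly special subvarieties of pure Shimura varieties (Moonen) it is totally geodesic.

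For (2): write $Q=U\rtimes G$ with $U=Q\cap V_{2g}$ a $G$-stable subspace of $V_{2g}$, and decompose $N$ accordingly into a unipotent part $N\cap V_{2g}$ and a reductive part mapping to $G^{\mathrm{der}}$. The $G$-stable subspace $N\cap V_{2g}$ cuts out an abelian subscheme $\cA'$ of $[\pi]^{-1}([\pi]Y)\to[\pi]Y$. Pink's construction presents $Y$ as a component of the image of an $N(\R)^{+}U_{N}(\C)$-orbit inside $\mathcal{Y}$; modulo $\cA'$, what remains is a coset of $U/(N\cap V_{2g})$ lying over $[\pi]Y$, which descends to a section of the quotient abelian scheme. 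I would then show that this section splits canonically as a torsion section (arising from the rational base-point of $(Q,\mathcal{Y})$) plus a section of the maximal isotrivial subabelian scheme $\cC$ (arising from those directions in $V_{2g}/(N\cap V_{2g})$ on which the image of $G^{\mathrm{der}}$ acts trivially). For the converse, given $Y$ satisfying (1) and (2), the Shimura datum $(G,\mathcal{H})$ associated to the totally geodesic $[\pi]Y$, together with the $G$-stable subspace $U\subset V_{2g}$ cutting out $\cA'$ and an appropriate choice of $N\triangleleft Q^{\mathrm{der}}$, reassembles into a sub-mixed Shimura datum whose associated weakly special subvariety is $Y$.

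The main obstacle is the identification of the isotrivial part $\cC$ on the Hodge-theoretic side: one must prove that the abelian subquotient singled out by the directions in $V_{2g}$ on which $G^{\mathrm{der}}$ acts trivially coincides with the maximal isotrivial subabelian scheme of $[\pi]^{-1}([\pi]Y)/\cA'$, which requires unpacking the variation of Hodge structures over $[\pi]Y$ and using that isotriviality is equivalent, up to isogeny, to the constancy of the associated period map. The Kuga-type generalization is essentially formal: the same analysis applies to any sub-mixed Shimura datum of weight $\{-1,0\}$, with $\GSp_{2g}$ replaced by the reductive quotient defining the pure part, and the full details are deferred to \cite[Section~2.2]{GaoThesis}.
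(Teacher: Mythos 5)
Your approach---working from Pink's definition, decomposing the sub-datum into a pure part $G_Q$ plus a $G_Q$-stable subspace of $V_{2g}$ and a rational offset $v_0$, cutting out the abelian subscheme from the unipotent part of $N$, and identifying the isotrivial factor $\cC$ with the directions in $V_{2g}$ on which the semisimple image of $N$ acts trivially---is essentially the same route the paper takes (via Propositions~\ref{ShimuraSubDatum}, \ref{WeaklySpecialOnTheTopDescription} and the identification \eqref{IsotrivialPartCompatibility}). The one step you flag as the ``main obstacle'' is indeed the only one requiring real work, and the paper resolves it exactly as you suggest, by identifying $G_N$ with the connected algebraic monodromy group of $([\pi]Y)^{\sm}$ (Moonen) so that triviality of the $G_N$-action on a subspace of $V_{2g}$ is equivalent to isotriviality of the corresponding subfamily.
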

Let us define constant sections of $\cC\rightarrow[\pi]Y$. By definition of isotriviality, there exists a finite cover $B^\prime\rightarrow B$ such that $\cC\times_{[\pi]Y}B^\prime\cong\cC_{b_0}\times B^\prime$ for any $b_0\in[\pi]Y$. A \textbf{constant section of $\cC\rightarrow[\pi]Y$} is then defined to be the image of the graph of a constant morphism $B^\prime\rightarrow\cC_{b_0}$ in $\cC\times_{[\pi]Y}B^\prime$ under the projection $\cC\times_{[\pi]Y}B^\prime\rightarrow\cC$.

A very important case of the Zilber-Pink conjecture is the Andr\'{e}-Oort conjecture, which for $\mathfrak{A}_g$ is equivalent to the following statement: if a subvariety $Y$ of $\mathfrak{A}_g$ contains a Zariski dense subset of special points (i.e. points of $\mathfrak{A}_g$ corresponding to torsion points of CM abelian varieties), then $Y$ is a weakly special subvariety of $\mathfrak{A}_g$. By previous work of Pila-Tsimerman \cite{PilaAxLindemannAg} and Gao \cite{GaoTowards-the-And}, the only obstacle to prove the Andr\'{e}-Oort conjecture for $\mathfrak{A}_g$ (or more generally, for any mixed Shimura variety of abelian type) is the lower bound for the Galois-orbits of special points.

The goal of this article is to study another important case of the Zilber-Pink conjecture, which we call the Andr\'{e}-Pink-Zannier conjecture:
\begin{conj}\label{AndrePinkConjecture}
Let $Y$ be a subvariety of $\mathfrak{A}_g$. Let $s\in\mathfrak{A}_g$ and $\Sigma$ be the generalized Hecke orbit of $s$. If $\bar{Y\cap\Sigma}^{\Zar}=Y$, then $Y$ is weakly special.
\end{conj}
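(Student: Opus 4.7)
The plan is to carry out the Pila--Zannier strategy in the mixed Shimura setting of $\mathfrak{A}_g$. Fix the uniformization $\mathfrak{u}\colon\mathfrak{X}\to\mathfrak{A}_g^{\mathrm{an}}$ associated to the mixed Shimura datum with underlying group $P=\GSp_{2g}\ltimes\Q^{2g}$, and a fundamental set $\cF$ on which $\mathfrak{u}|_\cF$ is definable in $\R_{\mathrm{an},\exp}$. Lift $s$ to $\tilde s\in\mathfrak{u}^{-1}(s)\cap\cF$ and choose an irreducible analytic component $\tilde Y\subset\mathfrak{u}^{-1}(Y)$ meeting $\cF$. Because $\Sigma$ is the generalized Hecke orbit of $s$, every $\sigma\in\Sigma$ is of the form $\mathfrak{u}(q\cdot\tilde s)$ for some $q\in P(\Q)$, and I attach to $\sigma$ a complexity $H(\sigma)$ measuring the minimal such $q$ relative to a fixed arithmetic lattice in $P(\Q)$.

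The core of the proof is a counting dichotomy applied to the definable set $\Theta=\{q\in P(\R):q\cdot\tilde s\in\tilde Y\cap\cF\}$. The density hypothesis $\bar{Y\cap\Sigma}^{\Zar}=Y$ supplies a rational point $q_\sigma\in\Theta(\Q)$ for every $\sigma\in Y\cap\Sigma$. By the Pila--Wilkie theorem, for every $\epsilon>0$ the rational points of $\Theta$ of height $\le T$ that do \emph{not} lie on connected positive-dimensional semi-algebraic blocks number $\ll_\epsilon T^\epsilon$. Matching this upper bound against a lower bound $|\gal(\bar\Q/k)\cdot\sigma|\gg H(\sigma)^\delta$ for some $\delta>0$ and a suitable field $k$ over which $Y$ and $s$ are defined, one deduces the existence of a positive-dimensional semi-algebraic block $B\subset\Theta$. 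The $B$-translate of $\tilde s$ is a positive-dimensional algebraic subset $Z\subset\mathfrak{X}$ contained in $\tilde Y$, to which the Ax--Lindemann theorem for $\mathfrak{A}_g$ proved in \cite{PilaAxLindemannAg,GaoTowards-the-And} applies: the Zariski closure of $\mathfrak{u}(Z)$ is a weakly special subvariety $Y'\subsetneq Y$ of positive dimension.

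To conclude, I would argue by induction on $\dim Y$. By Proposition~\ref{DescriptionWeaklySpecial}, $Y'$ is the fibre of a canonical weakly-special projection of the smallest weakly-special subvariety of $\mathfrak{A}_g$ containing $Y$; quotienting by this projection, $Y$ descends to a strictly lower-dimensional subvariety whose intersection with the descended generalized Hecke orbit of (the image of) $s$ is again Zariski dense. The inductive hypothesis identifies the descended $Y$ as weakly special, and pulling back together with the fact that $Y$ is saturated by translates of $Y'$ forces $Y$ itself to be weakly special.

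The hard part is the arithmetic lower bound on $|\gal(\bar\Q/k)\cdot\sigma|$ in terms of the Hecke complexity $H(\sigma)$. Points of the \emph{generalized} Hecke orbit do not in general correspond to CM or to torsion data, so neither the Tsimerman--Masser--W\"ustholz Galois estimates for special points nor the standard height-of-torsion machinery directly applies; controlling the fields of definition of arbitrary Hecke translates in the mixed setting is precisely where Conjecture~\ref{AndrePinkConjecture} remains open. The three cases resolved in the body of the paper correspond exactly to the situations in which this Galois lower bound is available: overlap with Andr\'{e}--Oort (CM input via Tsimerman); $s$ a torsion section in an abelian scheme over a curve (Silverman-type specialization together with height bounds on torsion); and $Y$ a curve (where one-dimensional functional transcendence suffices and the Galois problem collapses to a classical isogeny estimate).
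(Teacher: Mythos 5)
The statement you are proving is stated in the paper as a \emph{conjecture}: the paper itself proves only the special cases listed in Theorem~\ref{MainTheoremZannier} and Theorem~\ref{MainTheoremNonTorsion}, so there is no complete proof to compare yours against. Your outline does follow the same Pila--Zannier skeleton the paper uses in those cases (a definable set of the form $\sigma^{-1}(\tilde Y)$ with $\sigma(v,h)=(v,h)\cdot\tilde s$, Pila--Wilkie in block form, a Galois lower bound, then Ax--Lindemann), and you are candid that the Galois lower bound for points of a \emph{generalized} Hecke orbit of an arbitrary point $s$ is exactly what is missing. That is indeed the decisive gap: without it the counting dichotomy never produces the positive-dimensional block, and no amount of functional transcendence substitutes for it. So, as written, this is a strategy sketch for an open conjecture, not a proof, by your own admission.

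Beyond that admitted gap, two steps of your outline would not survive scrutiny even in the cases where the Galois input is available. First, the ``complexity'' cannot simply be the height of a minimal $q\in P(\Q)$ with $\mathfrak{u}(q\cdot\tilde s)=\sigma$: to compare the Galois bound with Pila--Wilkie one needs the complexity to be an isogeny-theoretic invariant (minimal degree of a polarized isogeny, order of a torsion point, the integers $n_t$, $N(\delta_t)$) and then a nontrivial estimate (the paper's Lemma~\ref{TranslateWRTComplexity} and Lemma~\ref{NonTorsionCaseRationalImage}, resting on Orr's bound for rational representations and, in the curve case, on Proposition~\ref{GoodParameterProposition}, i.e.\ a Silverman--Tate/Moriwaki N\'eron--Tate height argument) showing that some rational point of polynomially bounded height hits $\tilde t$. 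In particular the curve hypothesis in Theorem~\ref{MainTheoremNonTorsion} is used to make a pulled-back bundle ample in that height comparison, not because ``one-dimensional functional transcendence suffices.'' Second, your inductive step ``quotient by the canonical weakly-special projection and induct on $\dim Y$'' is not justified as stated: the paper's descent (in the torsion case) chooses a maximal normal subgroup $N$ of the ambient group so that $Y=[\rho]^{-1}(Y^\prime)$ and weakly special subvarieties are \emph{not} dense in $Y^\prime$, and then inducts on $g$ by embedding the non-isotrivial part into a smaller universal family $\mathfrak{A}_{g^\prime}$; one must also check that the image of the generalized Hecke orbit remains a set to which the hypothesis applies and that weak specialness of $Y^\prime$ pulls back to $Y$, which uses Proposition~\ref{DescriptionWeaklySpecial} and the specific choice of $N$. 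As written, your descent assumes both points without argument.
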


Several cases of this conjecture had been studied by Andr\'{e} before its final form was made by Pink \cite[Conjecture 1.6]{PinkA-Combination-o}. It is also closely related to a problem (Conjecture~\ref{ConjectureZannier}) proposed by Zannier. Pink has also proved \cite[Theorem 5.4]{PinkA-Combination-o} that Conjecture~\ref{AndrePinkConjecture} implies the Mordell-Lang conjecture.

Conjecture~\ref{AndrePinkConjecture} for $\cA_g$, the pure part of $\mathfrak{A}_g$, has been intensively studied by Orr in \cite{OrrFamilies-of-abe, MartinThesis}, generalizing the previous work of Habegger-Pila \cite[Theorem 3]{HabeggerSome-unlikely-i} in the Pila-Zannier method. This paper is based on the work of Orr \cite{OrrFamilies-of-abe, MartinThesis} and the author's previous work on the mixed Andr\'{e}-Oort conjecture \cite{GaoTowards-the-And}.

The set $\Sigma$ has good moduli interpretation: by Corollary \ref{GeneralizedHeckeIsogenyStep1},
\begin{equation}\label{ModuliInterpretationOfSigma}
\begin{array}{ll}
\Sigma&=\text{division points of the polarized isogeny orbit of }s \\
&=\{t\in\mathfrak{A}_g|~\exists n\in\N\text{ and a polarized isogeny }f\colon(\mathfrak{A}_{g,[\pi]s},\lambda_{[\pi]s})\rightarrow(\mathfrak{A}_{g,[\pi]t},\lambda_{[\pi]t})\text{ such that }nt=f(s)\}.
\end{array}
\end{equation}
There are authors who consider isogenies instead of polarized isogenies. However this does not essentially improve the result because of Zarhin's trick (see \cite[Proposition~4.4]{MartinThesis}): for any isogeny $f\colon A\rightarrow A^\prime$ between polarized abelian varieties, there exists $u\in\End(A^4)$ such that $f^4\circ u\colon A^4\rightarrow A^{\prime4}$ is a polarized isogeny. See $\mathsection$\ref{SectionVariantsOfTheMainConjecture} for more details.

Although Conjecture~\ref{AndrePinkConjecture} and the Andr\'{e}-Oort conjecture do not imply each other, they do have some overlap. The overlap of these two conjectures is the same statement of Conjecture~\ref{AndrePinkConjecture} with $\Sigma$ replaced by \textit{the set of points of $\mathfrak{A}_g$ corresponding to torsion points of CM abelian varieties admitting a polarized isogeny to a given principally polarized CM abelian variety}. A main result of this paper is to prove this overlap, partially generalizing existing result of Edixhoven-Yafaev \cite{YafaevSous-varietes-d, EdixhovenSubvarieties-of} and Klingler-Ullmo-Yafaev \cite{KlinglerThe-Andre-Oort-, UllmoGalois-orbits-a} for pure Shimura varieties  (see Theorem~\ref{MainTheoremZannier}.(2)).

We shall divide Conjecture~\ref{AndrePinkConjecture} into two cases: when $s$ is a torsion point of $\mathfrak{A}_{g,[\pi]s}$ and when $s$ is not a torsion point of $\mathfrak{A}_{g,[\pi]s}$. The diophantine estimates for both cases are not quite the same.

\subsection{The torsion case}
When $s$ is a torsion point of $\mathfrak{A}_{g,[\pi]s}$, this conjecture is related to a special-point problem proposed by Zannier. We define the following ``special topology" proposed by Zannier:
\begin{defn}
Fix a point $a\in\cA_g$. Then $a$ corresponds to a principally polarized abelian variety $(A_a,\lambda_a)$ of dimension $g$.
\begin{enumerate}
\item
We say that a point $t\in\mathfrak{A}_g$ is \textbf{$A_a$-special} (or \textbf{$a$-special}) if there exists an isogeny $A_a\rightarrow\mathfrak{A}_{g,[\pi]t}$ and that $t$ is a torsion point on the abelian variety $\mathfrak{A}_{g,[\pi]t}$. We shall denote by $\Sigma^\prime_a$ (or $\Sigma^\prime$ when there is no confusion) the set of $a$-special points.
\item
We say that a point $t\in\mathfrak{A}_g$ is \textbf{$(A_a,\lambda_a)$- special} if there exists a polarized isogeny $(A_a,\lambda_a)\rightarrow(\mathfrak{A}_{g,[\pi]t},\lambda_{[\pi]t})$ and that $t$ is a torsion point on the abelian variety $\mathfrak{A}_{g,[\pi]t}$. We shall denote by $\Sigma_a$ (or $\Sigma$ when there is no confusion) the set of $a$-strongly special points.
\item
We say that a subvariety $Z$ of $\mathfrak{A}_g$ is \textbf{$a$-special} if $Z$ contains an $a$-special point, $[\pi]Z$ is a totally geodesic subvariety of $\cA_g$ and $Z$ is an irreducible component of a subgroup of $[\pi]^{-1}([\pi]Z)$.
\end{enumerate}
\end{defn}

In view of Proposition~\ref{DescriptionWeaklySpecial}, every $a$-(strongly) special subvariety is weakly special. The following conjecture is proposed by Zannier.

\begin{conj}\label{ConjectureZannier}
Let $Y$ be a subvariety of $\mathfrak{A}_g$ and let $a\in\cA_g$. If $\bar{Y\cap\Sigma^\prime_a}^{\Zar}=Y$, then $Y$ is $a$-special.
\end{conj}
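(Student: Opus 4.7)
The plan is to attack Conjecture~\ref{ConjectureZannier} by the Pila--Zannier method, adapted to the mixed Shimura variety $\mathfrak{A}_g$. The three main ingredients are the mixed Ax--Lindemann theorem for $\mathfrak{A}_g$ (due to Pila--Tsimerman and the author, see \cite{PilaAxLindemannAg, GaoTowards-the-And}), the Pila--Wilkie counting theorem in the o-minimal structure $\R_{\mathrm{an,exp}}$, and a Galois-orbit lower bound for $a$-special points in the spirit of Orr's isogeny estimates \cite{OrrFamilies-of-abe, MartinThesis}.

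First I would fix a uniformization $\unif\colon\cX^+\to\mathfrak{A}_g$ of the mixed Shimura datum, together with a semi-algebraic fundamental set $\cF\subset\cX^+$, so that $Z:=\unif^{-1}(Y)\cap\cF$ is definable in $\R_{\mathrm{an,exp}}$. For each $t\in Y\cap\Sigma^\prime_a$ choose a lift $\tilde t\in Z$: if the isogeny $A_a\to\mathfrak{A}_{g,[\pi]t}$ realising $t$ as $a$-special has degree $d(t)$, and $t$ has order $N(t)$ on its fibre, then in the Siegel-type realisation of $\cX^+$ the coordinates of $\tilde t$ are rational with denominators polynomially bounded in $d(t)N(t)$, hence $\tilde t$ has controlled height. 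On the other hand, combining Orr's Masser--W\"ustholz-type bound on $\cA_g$ with classical estimates for Galois orbits of torsion points of abelian varieties, one expects an inequality
\[
[\Q(t):\Q(a)]\;\gg_a\;(d(t)N(t))^{\delta}
\]
for some uniform $\delta>0$ independent of $t$.

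With these inputs in place, Pila--Wilkie applied to $Z$ forces $Z$ to contain a connected positive-dimensional semi-algebraic set, and the mixed Ax--Lindemann theorem then converts such pieces into positive-dimensional weakly special subvarieties of $\mathfrak{A}_g$ contained in $Y$; running the argument over a Zariski-dense set of $t$ and assembling the resulting weakly special subvarieties (as in the Pila--Ullmo--Yafaev reduction) yields that $Y$ itself is weakly special. Proposition~\ref{DescriptionWeaklySpecial} then exhibits $Y$ as a translate of an abelian subscheme of $[\pi]^{-1}([\pi]Y)$ over a totally geodesic base $[\pi]Y\subset\cA_g$, by a torsion section and then by a constant section of the isotrivial part $\cC$. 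Since $Y$ contains an $a$-special point, the constant-section shift must itself be torsion and is absorbed into the torsion section; moreover, $[\pi]Y$ contains a point in the isogeny orbit of $a$. This is precisely the definition of $a$-special.

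The main obstacle is the Galois-orbit lower bound. Orr's work supplies the factor $d(t)^\delta$ controlling the $\cA_g$-direction, and the torsion contribution $N(t)^\delta$ is classical on a \emph{fixed} fibre, but coupling the two uniformly when both $[\pi]t$ and $N(t)$ vary simultaneously requires controlling how $N(t)$-torsion splits over the field of definition of the isogenous fibre. This uniform coupling appears to be the essential reason the results of the paper are restricted to special cases (the subvariety contained in an abelian scheme over a curve, or a curve) rather than the full statement.
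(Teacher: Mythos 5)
You are attempting to prove Conjecture~\ref{ConjectureZannier}, but note that the paper does not claim to prove it in full generality: it only establishes the two special cases of Theorem~\ref{MainTheoremZannier} ($\dim[\pi]Y\leqslant 1$, or $a$ special). Your setup --- uniformization, definable fundamental set, height control for lifts of $a$-special points, Pila--Wilkie, mixed Ax--Lindemann, Proposition~\ref{DescriptionWeaklySpecial} to pass from weakly special to $a$-special --- matches the architecture of $\mathsection$\ref{SectionDiophantineEstimateForTheTorsionCase}--$\mathsection$\ref{SectionEndOfTheProofForTheTorsionCase}, and your observation that a weakly special $Y$ through an $a$-special (hence torsion) point must have torsion, not merely constant, section is correct.

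However, your diagnosis of \emph{where} the argument breaks down is wrong, and this is the substantive gap. You claim the missing ingredient is the Galois-orbit lower bound coupling the isogeny degree $d(t)$ with the torsion order $N(t)$. In fact the paper proves exactly this bound in full generality, with no hypothesis on $Y$: Proposition~\ref{GaloisOrbitNumberField} handles $\bar\Q$-points by splitting into the two cases $n=d(t)$ and $n=N(t)$, and in the second case the coupling you worry about is resolved by Faltings' height comparison under isogeny ($h_F(\mathfrak{A}_{g,[\pi]t})\leqslant h_F(A_a)+\tfrac12\log n$) combined with Gaudron--R\'emond's decomposition into simple factors and David's torsion lower bound; Corollary~\ref{GaloisOrbitGeneralCase} then extends it by specialization. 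The true obstruction lies in the final step you summarize as ``assembling the resulting weakly special subvarieties\dots yields that $Y$ itself is weakly special.'' Theorem~\ref{WeaklySpecialZariskiDense} (proved unconditionally) only gives that the union of positive-dimensional weakly special subvarieties contained in $Y$ is Zariski dense; passing from this to $Y$ being weakly special requires the quotient construction of \cite[Theorem 12.2]{GaoTowards-the-And}, which reduces to a $Y'$ in which weakly special subvarieties are \emph{not} Zariski dense, and one must then show $Y'$ is a point. The paper only succeeds here under the extra hypotheses: case (1) uses an induction on $g$ together with the Manin--Mumford theorem, available precisely because $\dim[\pi]Y\leqslant 1$ forces the intermediate family to have strictly smaller relative dimension; case (2) uses that when $a$ is CM all $a$-special points are genuine special points, so the mixed Andr\'e--Oort machinery applies. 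Your sketch elides this concluding step, which is where the real difficulty sits.
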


By \eqref{ModuliInterpretationOfSigma}, Conjecture~\ref{AndrePinkConjecture} when $s$ is a torsion point of $\mathfrak{A}_{g,[\pi]s}$ is equivalently to a weaker version of Conjecture~\ref{ConjectureZannier}, i.e. replace $\Sigma^\prime_a$ by $\Sigma_a$ in Conjecture~\ref{ConjectureZannier}. However by \cite[Proposition 4.4]{MartinThesis}, Conjecture~\ref{AndrePinkConjecture} for $\mathfrak{A}_{4g}$ also implies Conjecture~\ref{ConjectureZannier} for $\mathfrak{A}_g$. Our first main result is:

\begin{thm}\label{MainTheoremZannier}
Conjecture~\ref{ConjectureZannier} holds if one of the following conditions holds:
\begin{enumerate}
\item either $\dim([\pi](Y))\leqslant 1$;
\item or the point $a$ is a special point of $\cA_g$ (which is the overlap of Conjecture~\ref{AndrePinkConjecture} and the Andr\'{e}-Oort conjecture for $\mathfrak{A}_g$).
\end{enumerate}
\end{thm}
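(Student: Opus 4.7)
The plan is to adapt the Pila--Zannier strategy to the mixed Shimura variety $\mathfrak{A}_g$. Write $\unif$ for the uniformization of $\mathfrak{A}_g$ by its mixed Shimura domain, fix a semialgebraic fundamental domain $\cF$ which is definable in $\R_{\mathrm{an,exp}}$, and set $Z := \unif^{-1}(Y) \cap \cF$. The three ingredients are: the hyperbolic Ax--Lindemann theorem for $\mathfrak{A}_g$ proved in \cite{PilaAxLindemannAg,GaoTowards-the-And}; the Pila--Wilkie counting theorem applied to $Z$; and, for each $t \in Y \cap \Sigma'_a$, a polynomial lower bound of the shape $|\gal(\bar\Q/K)\cdot t| \gg (n(t)\, d(t))^{\delta}$, where $K$ is a common field of definition, $d(t)$ is the isogeny degree from $A_a$ to $\mathfrak{A}_{g,[\pi]t}$ and $n(t)$ the torsion order of $t$ on its fiber. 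By Zarhin's trick (\cite[Proposition~4.4]{MartinThesis}) we may assume throughout that the isogenies in play are polarized, at the cost of replacing $g$ by $4g$.

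For Case~(2), $A_a$ is CM, hence every $\mathfrak{A}_{g,[\pi]t}$ with $t \in \Sigma'_a$ is CM of the same CM type as $A_a$; in particular $\Sigma'_a$ sits inside the set of special points of $\mathfrak{A}_g$, which is why this is the overlap with Andr\'{e}--Oort. Tsimerman's theorem, applied uniformly in this fixed CM type, supplies the needed polynomial bound on $|\gal \cdot [\pi]t|$ in the isogeny degree, and Masser's theorem on torsion points of abelian varieties supplies the vertical factor $n(t)$. Combining the Galois bound with Pila--Wilkie applied to $Z$ produces, through each lift $\tilde t$ of sufficiently large complexity, a positive-dimensional semialgebraic arc inside $Z$; Ax--Lindemann then shows that the $\unif$-image of each such arc is a positive-dimensional weakly special subvariety of $Y$, and a monodromy/algebraic-group argument as in \cite{GaoTowards-the-And} promotes this to the conclusion that $Y$ is weakly special.

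For Case~(1) we split on $\dim[\pi]Y$. If $[\pi]Y = \{b\}$, then $Y \subset \mathfrak{A}_{g,b}$ contains a Zariski dense set of torsion points, so by Raynaud's theorem $Y$ is a torsion translate of an abelian subvariety of $\mathfrak{A}_{g,b}$; any $a$-special point of $Y$ witnesses an isogeny $A_a \to \mathfrak{A}_{g,b}$, making the whole fibre, and hence $Y$, $a$-special. If $[\pi]Y$ is a curve, then the Zariski dense subset $[\pi](Y \cap \Sigma'_a) \subset [\pi]Y$ consists of points $b$ with $\mathfrak{A}_{g,b}$ isogenous to $A_a$, and Orr's theorem \cite{OrrFamilies-of-abe,MartinThesis} forces $[\pi]Y$ to be a totally geodesic curve in $\cA_g$. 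With the base now totally geodesic, the Pila--Wilkie plus Ax--Lindemann argument runs on the abelian scheme $[\pi]^{-1}([\pi]Y) \to [\pi]Y$, with the vertical Galois bound provided once more by Masser's torsion estimate, yielding that $Y$ is weakly special.

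Once $Y$ is weakly special, Proposition~\ref{DescriptionWeaklySpecial} writes it as a translate, by a torsion section and then by a constant section of the isotrivial part $\cC$, of an abelian subscheme of $[\pi]^{-1}([\pi]Y)$. Any $a$-special $t_0 \in Y$ is torsion on its fibre, which forces the constant section to agree modulo the abelian subscheme with a torsion section, so $Y$ is a component of a subgroup scheme of $[\pi]^{-1}([\pi]Y)$; and $\mathfrak{A}_{g,[\pi]t_0}$ being isogenous to $A_a$ places $[\pi]Y$ in the isogeny locus of $a$. Hence $Y$ is $a$-special. The principal obstacle throughout is the Galois lower bound: the virtue of the two hypotheses of the theorem is precisely that they allow one to sidestep the still-open uniform Galois estimate for special points of $\mathfrak{A}_g$ and replace it by an accessible input --- Tsimerman combined with Masser in Case~(2), Orr combined with Masser in Case~(1).
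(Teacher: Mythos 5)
The high-level strategy you describe --- definable fundamental set, Pila--Wilkie, Ax--Lindemann, and a polynomial Galois lower bound in a complexity built from isogeny degree and torsion order --- is indeed the one the paper follows (Theorem~\ref{WeaklySpecialZariskiDense} packages exactly this). But two of your key inputs are not the ones the proof actually uses, and one gap is substantive.

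First, in Case~(2) you invoke Tsimerman's theorem on Galois orbits of CM points. The paper never uses Tsimerman. The Galois estimate it needs is of a different (and weaker) flavour: it does not bound $|\gal\cdot[\pi]t|$ from below in terms of the discriminant of the CM field, but rather bounds the minimal polarized isogeny degree from $A_a$ to $\mathfrak{A}_{g,[\pi]t}$ polynomially from above in terms of $[\Q([\pi]t):\Q]$. That is Orr's theorem \cite[Theorem 5.1]{OrrFamilies-of-abe} (together with Gaudron--R\'emond in Proposition~\ref{GaloisOrbitNumberField}), valid for arbitrary $a$, not only CM $a$. The point of Theorem~\ref{MainTheoremZannier}(2) being ``accessible'' is precisely that one does not need the Andr\'e--Oort-strength lower bound; so attributing the input to Tsimerman not only mis-cites the proof, it obscures why the statement is easier than Andr\'e--Oort. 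Moreover the torsion-order factor in the complexity is handled by David's bound \cite[Corollaire 1.5]{DavidMinorations-de-} (after the standard specialization argument), not by Masser; Masser enters only in the non-torsion case (Theorem~\ref{MainTheoremNonTorsion}), which is a different estimate.

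Second, and more seriously, for Case~(1) you go from ``Pila--Wilkie plus Ax--Lindemann runs on the abelian scheme over $[\pi]Y$'' directly to ``$Y$ is weakly special.'' Pila--Wilkie plus Ax--Lindemann plus the Galois bound delivers Theorem~\ref{WeaklySpecialZariskiDense}: the union of positive-dimensional weakly special subvarieties contained in $Y$ is Zariski dense in $Y$. That is strictly weaker than $Y$ itself being weakly special. The paper closes the gap by an induction on $g$: using \cite[Theorem 12.2]{GaoTowards-the-And} it produces the Shimura morphism $[\rho]\colon S\to S'=S/N$ with $Y=[\rho]^{-1}(Y')$ and with the union of positive-dimensional weakly specials in $Y'$ \emph{not} Zariski dense; then it splits according to $\dim([\pi'](Y'))\in\{0,1\}$, using Manin--Mumford in the first subcase and, in the second subcase, that $N$ is unipotent (forced by $\dim[\pi]Y=1$) so one lands in some $\mathfrak{A}_{g'}$ with $g'<g$ and can invoke the induction hypothesis. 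This reduction step --- including the observation that $N/\cR_u(N)$ must be trivial because the base is one-dimensional --- is where the hypothesis $\dim([\pi]Y)\le 1$ does real work, and your proposal contains no substitute for it. The phrase ``a monodromy/algebraic-group argument as in \cite{GaoTowards-the-And} promotes this'' names the right reference but does not supply the argument; without it the proof does not close.

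Finally, your last paragraph, deducing $a$-special from weakly special plus the presence of an $a$-special point, is a reasonable and correct cleanup step, and is in line with Proposition~\ref{DescriptionWeaklySpecial}, though the paper leaves this mostly implicit.
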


The proof of this theorem will be presented in $\mathsection$\ref{SectionDiophantineEstimateForTheTorsionCase} and $\mathsection$\ref{SectionEndOfTheProofForTheTorsionCase}.
Remark that by Corollary \ref{GeneralizedHeckeIsogeny}, the case where $\dim([\pi]Y)=0$ (i.e. $[\pi](Y)$ is a point) is nothing but the Manin-Mumford conjecture, which is proved by many people (the first proof was given by Raynaud). On the other hand, with a similar proof, Theorem~\ref{MainTheoremZannier}.(2) holds for more general cases (more details will be given in the forthcoming dissertation \cite[Theorem 14.2]{GaoThesis}). In this paper we only present the proof for the case $\mathfrak{A}_g$.

\subsection{The non-torsion case}
The situation becomes more complicated when $s$ is not a torsion point of $\mathfrak{A}_{g,[\pi]s}$. In this case we prove (in $\mathsection$\ref{SectionProofOfTheNonTorsionCase}):
\begin{thm}\label{MainTheoremNonTorsion}
Conjecture~\ref{AndrePinkConjecture} holds if $Y$ is a curve.
\end{thm}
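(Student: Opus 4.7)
The plan is to apply the Pila-Zannier strategy: a lower bound for the Galois-orbit complexity of points $t\in\Sigma$ is confronted with the Pila-Wilkie counting theorem inside the $\R_{\mathrm{an},\exp}$-definable uniformization of $\mathfrak{A}_g$, and the mixed Ax-Lindemann theorem of \cite{GaoTowards-the-And} is used to promote positive-dimensional algebraic blocks in the lift of $Y$ to weakly special subvarieties of $\mathfrak{A}_g$ containing $Y$.

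Since $Y$ is a curve, $[\pi]Y$ is either a point $b$ or a curve $C$. In the first case $Y$ lies inside the abelian variety $\mathfrak{A}_{g,b}$, and the moduli description~\eqref{ModuliInterpretationOfSigma} shows that $\Sigma\cap\mathfrak{A}_{g,b}$ is contained in the division group of the finitely generated subgroup $\Gamma=\hom^{\mathrm{pol}}((A_s,\lambda_s),(A_b,\lambda_b))\cdot s$ of $\mathfrak{A}_{g,b}$. The Mordell-Lang theorem for division groups of finitely generated subgroups (Faltings-Hindry) then forces the curve $Y$ to be a torsion translate of an abelian subvariety of $\mathfrak{A}_{g,b}$, which by Proposition~\ref{DescriptionWeaklySpecial} is weakly special in $\mathfrak{A}_g$. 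Combined with Theorem~\ref{MainTheoremZannier}.(1), which already covers the case where $s$ is a torsion point of $\mathfrak{A}_{g,[\pi]s}$, it remains to consider the case $s$ non-torsion and $\dim([\pi]Y)=1$. In that case $Y\to C$ is finite, $C\cap[\pi]\Sigma$ is Zariski dense in $C$, and Orr's theorem for $\cA_g$ (\cite{OrrFamilies-of-abe, MartinThesis}) yields that $C$ is a weakly special, hence totally geodesic, curve of $\cA_g$. I may therefore replace $\mathfrak{A}_g$ by the abelian scheme $[\pi]^{-1}(C)\to C$, which is itself weakly special.

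Working inside this weakly special subvariety, I fix a definable fundamental set $\cF$ in the universal cover above $C$, using the $\R_{\mathrm{an},\exp}$-definability of the mixed uniformization established in \cite{GaoTowards-the-And}, and translate $Y\cap\Sigma$ into rational points in theta coordinates along the fiber and period coordinates along the base. The arithmetic input is a polynomial lower bound for $[\Q(t):\Q]$ when $t\in\Sigma$, expressed in terms of the degree of a polarized isogeny $f:(A_s,\lambda_s)\to(A_{[\pi]t},\lambda_{[\pi]t})$ and the torsion denominator $n$ with $nt=f(s)$. This bound combines a Masser-W\"ustholz isogeny estimate controlling the field of definition of $f$ with a Kummer-type bound for division points of the non-torsion section $s$, along the lines of Orr's extension \cite{MartinThesis} of Habegger-Pila to the non-torsion isogeny-orbit setting. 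The Pila-Wilkie theorem supplies a matching sub-polynomial upper bound on rational points of bounded height in $\cF\cap\mathrm{unif}^{-1}(Y)$ outside its algebraic part; combining the two and letting the complexity parameter grow forces $\mathrm{unif}^{-1}(Y)$ to contain a positive-dimensional semi-algebraic block. The mixed Ax-Lindemann theorem of \cite{GaoTowards-the-And} then identifies the Zariski closure of the image of this block with a weakly special subvariety $Y'\subseteq\mathfrak{A}_g$ of positive dimension contained in $Y$, and since $Y$ is a curve one has $Y=Y'$.

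The main obstacle will be the Galois-orbit lower bound in the non-torsion regime: both the isogeny complexity of $f$ (via Masser-W\"ustholz) and the height of the non-torsion translate $f(s)$ (via N\'eron-Tate functoriality across the isogeny orbit) must be controlled and related to the torsion denominator $n$ so that the resulting lower bound strictly dominates the Pila-Wilkie contribution $O(T^\epsilon)$. A secondary technical point is the presence of the isotrivial factor $\cC\to C$ in Proposition~\ref{DescriptionWeaklySpecial}: the constant-section freedom forces $\cF$ to be chosen after splitting off $\cC$, so that the Ax-Lindemann step is not absorbed by a constant-section translation.
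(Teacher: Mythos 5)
Your high-level strategy is the right one and matches the paper: define a complexity parameter for points of the generalized Hecke orbit, prove a polynomial lower bound on Galois orbits, apply Pila--Wilkie inside the $\R_{\mathrm{an},\exp}$-definable fundamental set, and invoke the mixed Ax--Lindemann theorem to promote a positive-dimensional block to a weakly special subvariety. You also correctly identify the two arithmetic inputs for the Galois bound: a Masser--W\"ustholz/Gaudron--R\'emond isogeny estimate (Proposition~\ref{GaloisOrbitNonTorsionCase}, \emph{Case ii} via Orr) and a Kummer-theoretic bound for the non-torsion division points (Hindry, McQuillan). The reduction via Mordell--Lang when $[\pi]Y$ is a point, and the $Y=Y'$ conclusion when a positive-dimensional block is found, are both as in the paper.

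However, the proposal does not contain the ingredient that actually makes the curve case close, and it is also the place where the curve hypothesis is used most essentially. When you rewrite a point $t\in\Sigma$ in terms of the \emph{minimal-degree} polarized isogeny $f_t$ (which is what the Gaudron--R\'emond/Orr estimate controls), Corollary~\ref{GeneralizedHeckeIsogeny} introduces an endomorphism $\varphi_t\in\End(A,\lambda)$, and in Lemma~\ref{NonTorsionCaseRationalImage} the height of the rational element $(v,h)\in P_{2g}(\Q)^+$ sending $\tilde{s}$ to $\tilde{t}$ is bounded in terms of $\deg f_t$, $\deg\varphi_t$, $n_t$ and $N(\delta_t)$. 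A priori $\deg\varphi_t$ and $\deg f_t$ are unbounded independently of the torsion denominator $n_t$, so one cannot feed the Galois bound and the Pila--Wilkie bound into the same complexity parameter. The paper's Proposition~\ref{GoodParameterProposition} resolves this by showing $\deg f_t$ and $\deg\varphi_t$ are polynomially bounded in $n_t$; the key step is Lemma~\ref{FirstDevissageNonTorsionCase}, a Silverman--Tate type comparison (in Moriwaki's finitely-generated-fields setting) of the canonical N\'eron--Tate height in the family with the height on the base $[\pi]Y$, combined with Lin--Wang's observation that when $Y$ is a curve not contained in a fiber, the finiteness of $Y\to[\pi]Y$ makes $\epsilon_{Y'}^*p_1^*\mathfrak{L}|_X$ ample and hence gives $h_{\mathfrak{A}_g,\mathfrak{L}}^{\bar\B}(t)\leq c_{17}\,h_{\cA_g,\epsilon^*\mathfrak{L}}^{\bar\B}([\pi]t)+c_{18}$. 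Without this step your ``complexity'' would have to carry $\deg f$ as a separate variable, and the matching between the Kummer lower bound and the Pila--Wilkie upper bound fails. Your remark that ``the height of the non-torsion translate $f(s)$ $\ldots$ must be controlled'' points at the right obstacle, but you should make explicit that this is precisely where the curve hypothesis enters (and not only in the final $Y=Y'$ step). Two smaller points: the paper does not first reduce to a weakly special base curve via Orr's theorem on $\cA_g$---it runs Pila--Zannier directly in $\mathfrak{A}_g$, which avoids having to redo the o-minimal setup on a sub-Shimura variety; and the worry about the isotrivial factor $\cC\to C$ does not arise in the paper's argument, since the final Ax--Lindemann step (Theorem~\ref{Ax-Lindemann}) already produces a weakly special subvariety of $\mathfrak{A}_g$ regardless of how the isotrivial part sits.
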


\subsection*{Structure of the paper} In $\mathsection$\ref{SectionUniversalFamily} we define the universal family of abelian varieties in the language of mixed Shimura varieties of Pink \cite{PinkThesis}. In $\mathsection$\ref{SectionWeaklySpecialSubvariety} we discuss weakly special subvarieties of $\mathfrak{A}_g$. In particular we prove Proposition~\ref{DescriptionWeaklySpecial} and recall the Ax-Lindemann theorem in this section. Then we shall lay the base of the study for Conjecture~\ref{AndrePinkConjecture} in $\mathsection$\ref{SectionGeneralizedHeckeOrbit}, where matrix expressions of polarized isogenies are given and generalized Hecke orbits are computed. After these preliminaries, we will start proving Theorem~\ref{MainTheoremZannier} and Theorem~\ref{MainTheoremNonTorsion}. The proof of Theorem~\ref{MainTheoremZannier} will be executed in $\mathsection$\ref{SectionDiophantineEstimateForTheTorsionCase} and $\mathsection$\ref{SectionEndOfTheProofForTheTorsionCase}, with the former section devoted to the Diophantine estimate and the latter section devoted to the rest of the proof. In $\mathsection$\ref{SectionProofOfTheNonTorsionCase} the proof for Theorem~\ref{MainTheoremNonTorsion} will be presented. In the last section $\mathsection$\ref{SectionVariantsOfTheMainConjecture}, we discuss the following situation: replace the subset $\Sigma$ (which is \eqref{ModuliInterpretationOfSigma}) in Conjecture~\ref{AndrePinkConjecture} by the isogeny orbit of a finitely generated subgroup of one fiber. We will prove that although this change a priori seems to generalize Conjecture~\ref{AndrePinkConjecture}, it can in fact be implied by Conjecture~\ref{AndrePinkConjecture}. For more details see Corollary \ref{AndrePinkFinitelyGeneratedSubgroupIsogenyOrbit}.

\subsection*{Achknowledgements} I am grateful to my supervisor Emmanuel Ullmo for regular discussions during the preparation of this paper. I would like to thank Martin Orr a lot for answering my questions related to his previous work \cite{MartinThesis, OrrFamilies-of-abe}. I would like to thank Nicolas Ratazzi for pointing out the paper of David \cite{DavidMinorations-de-} to me. This article was merged into my PhD thesis \cite{GaoThesis}. I would like to thank Yves Andr\'{e}, Bas Edixhoven and Bruno Klingler for their careful reading and suggestions to improve the presentation of the paper. I would also like to thank Daniel Bertrand, Marc Hindry and David Holmes for relevant discussion. Finally I would like to thank the referee for their reading of the manuscript and their helpful comments.

\section{Universal family of abelian varieties}\label{SectionUniversalFamily}
Let $\S:=\res_{\C/\R}\G_{m,\C}$. 
Let $g\in\N_{>0}$. Let $V_{2g}$ be a $\Q$-vector space of dimension $2g$ and let
\begin{equation}\label{AlternatingForm}
\Psi\colon V_{2g}\times V_{2g}\rightarrow U_{2g}:=\G_{a,\Q}
\end{equation}
be a non-degenerate alternating form. Define
\[
\GSp_{2g}:=\{g\in\GL(V_{2g})|\Psi(gv,gv^\prime)=\nu(g)\Psi(v,v^\prime)\text{ for some }\nu(g)\in\G_m\},
\]
and $\H_g^+$ the set of all homomorphisms
\[
\S\rightarrow\GSp_{2g,\R}
\]
which induce a pure Hodge structure of type $\{(-1,0),(0,-1)\}$ on $V_{2g}$ and for which $\Psi$ defines a polarization. The action of $\GSp_{2g}(\R)^+$ on $\H_g^+$ is given by the conjugation, i.e. for any $h\in\GSp_{2g}(\R)^+$ and any $x\in\H_g^+$, $h\cdot x$ is the morphism
\begin{align*}
h\cdot x\colon\S &\rightarrow\GSp_{2g,\R} \\
y &\mapsto hx(y)h^{-1}
\end{align*}
It is well known that $\H^+_g$ can be identified with the Siegel upper half space (of genus $g$)
\[
\{Z=X+\sqrt{-1}Y\in M_{g\times g}(\C)|~Z=Z^t,~Y>0\}
\]
and the action of $\GSp_{2g}(\R)^+$ on $\H_g^+$ is given by
\[
\left(\begin{array}{cc}
A & B \\
C & D
\end{array}\right)Z:=(AZ+B)(CZ+D)^{-1}.
\]

The action of $\GSp_{2g}$ on $V_{2g}$ induces a Hodge structure of type $\{(-1,0),(0,-1)\}$ on $V_{2g}$. Let
\[
\cX_{2g,a}^+:=V_{2g}(\R)\rtimes\H_g^+\subset\hom(\S,V_{2g,\R}\rtimes\GSp_{2g,\R})
\]
denote the conjugacy class under $(V_{2g}\rtimes\GSp_{2g})(\R)^+$ generated by $\H_g^+$ (recall that every point of $\H_g^+$ gives rise to a homomorphism $\S\rightarrow\GSp_{2g,\R}\subset V_{2g,\R}\rtimes\GSp_{2g,\R}$). The notion $V_{2g}(\R)\rtimes\H_g^+$ is justified by the natural bijection
\begin{equation}\label{UniformizingSpace}
V_{2g}(\R)\times\H_g^+\xrightarrow{\sim}V_{2g}(\R)\rtimes\H_g^+,~~(v^\prime,x)\mapsto\mathrm{int}(v^\prime)\circ x.
\end{equation}
Under this bijection the action of $(v,h)\in(V_{2g}\rtimes\GSp_{2g})(\R)^+$ is given by $(v,h)\cdot(v^\prime,x):=(v+hv^\prime,hx)$.

Denote by $(P_{2g,\mathrm{a}},\cX_{2g,a}^+):=(V_{2g}\rtimes\GSp_{2g},V_{2g}(\R)\times\H_g^+)$. This is a connected mixed Shimura datum (\cite[2.25]{PinkThesis}). There is a natural morphism
\[
\pi\colon(P_{2g,\mathrm{a}},\cX_{2g,a}^+)\rightarrow(\GSp_{2g},\H_g^+)
\]
induced by $P_{2g,\mathrm{a}}=V_{2g}\rtimes\GSp_{2g}\rightarrow\GSp_{2g}$.

Let $\Gamma_V(N):=NV(\Z)$ and $\Gamma_G(N):=\{h\in \GSp_{2g}(\Z)|h\equiv 1\pmod N\}$ for any integer $N\geqslant 3$. Define $\Gamma(N):=\Gamma_V(N)\rtimes \Gamma_G(N)$, then it is a neat subgroup of $P_{2g}(\Q)^+$. Define
\[
\mathfrak{A}_g(N):=\Gamma(N)\backslash\cX_{2g,a}^+
\]
and
\[
\cA_g(N):=\Gamma_G(N)\backslash\H_g^+.
\]
Then $\mathfrak{A}_g(N)$ is a connected mixed Shimura variety and $\cA_g(N)$ is a connected pure Shimura variety. The morphism $\pi$ induces a Shimura morphism
\begin{equation}\label{UniversalFamily}
[\pi]\colon\mathfrak{A}_g(N)\rightarrow\cA_g(N).
\end{equation}

\begin{thm}\label{UniversalFamilyTheorem}
\begin{enumerate}
\item The morphism \eqref{UniversalFamily} is the universal family of principally polarized abelian varieties of dimension $g$ over the fine moduli space $\cA_g(N)$.
\item Both $\mathfrak{A}_g(N)$ and $\cA_g(N)$ are both defined over $\bar{\Q}$.
\item Let $\cF:=[0,N)^{2g}\times\cF_G\subset V_{2g}(\R)\times\H_g^+\cong\cX^+_{2g,\mathrm{a}}$, where $\cF_G$ is a fundamental Siegel set for the action of $\Gamma_G(N)$ on $\H_g^+$. Then $\cF$ is a fundamental set for the action of $\Gamma(N)$ on $\cX^+_{2g,\mathrm{a}}$ such that $\unif|_{\cF}$ is definable in the o-minimal theory $\R_{an,\exp}$.
\end{enumerate}
\begin{proof} See \cite[10.5, 10.9, 10.10, 11.16]{PinkThesis} for (1) and (2). (3) is the main result of \cite{PeterzilDefinability-of} (see \cite[Remark 4.4]{GaoTowards-the-And}).
\end{proof}
\end{thm}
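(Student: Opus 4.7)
The plan is to establish the three assertions separately. Parts (1) and (2) are essentially consequences of Pink's construction of mixed Shimura varieties of Kuga type together with their canonical models; part (3) splits into a straightforward check that $\cF$ is a fundamental set and an appeal to the deep definability theorem of Peterzil--Starchenko.

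For part (1), I would unwind the quotient construction directly. Over a point $[x]\in\cA_g(N)$, the fibre of $[\pi]$ is identified with $V_{2g}(\R)/\Gamma_V(N)=V_{2g}(\R)/NV_{2g}(\Z)$, which is a real torus carrying the complex structure coming from the Hodge decomposition of $V_{2g,\C}$ determined by $x\colon\S\to\GSp_{2g,\R}$, and the principal polarization induced by $\Psi$ (positive by definition of $\H_g^+$). The level-$N$ structure is given by the inclusion $V_{2g}(\Z)/NV_{2g}(\Z)\hookrightarrow V_{2g}(\R)/NV_{2g}(\Z)$ picking out the $N$-torsion. Universality then reduces to the standard representability of the moduli functor of principally polarized abelian varieties of dimension $g$ with level-$N$-structure for $N\geqslant 3$, whose fine moduli scheme is realized by $\cA_g(N)$. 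For part (2), the pure base $\cA_g(N)$ has a canonical model over its reflex field $\Q$ by the theory of canonical models of pure Shimura varieties; the total space $\mathfrak{A}_g(N)$ inherits a model over $\Q$ as the associated universal abelian scheme, compatibly with $[\pi]$, so both are defined over $\bar\Q$.

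For part (3), the combinatorial step is that $[0,N)^{2g}$ is a fundamental domain for the translation action of $\Gamma_V(N)=NV_{2g}(\Z)$ on $V_{2g}(\R)$, and $\cF_G$ is by hypothesis a fundamental Siegel set for $\Gamma_G(N)$ on $\H_g^+$; using the semi-direct product action $(v,h)\cdot(v',x)=(v+hv',hx)$, any point of $V_{2g}(\R)\times\H_g^+$ is brought into $\cF$ by first applying a suitable element of $\Gamma_G(N)$ to move the $\H_g^+$-coordinate into $\cF_G$ and then a suitable element of $\Gamma_V(N)$ to move the translated $V_{2g}(\R)$-coordinate into $[0,N)^{2g}$. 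For the analytic step, Peterzil--Starchenko show that the restriction of the uniformization $\H_g^+\to\cA_g(N)$ to $\cF_G$ is definable in $\R_{an,\exp}$, and their argument extends through a theta-embedding description of the relative abelian scheme to the mixed uniformization $V_{2g}(\R)\times\H_g^+\to\mathfrak{A}_g(N)$ restricted to $\cF$.

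The main obstacle is the analytic step of part (3): globally the uniformization has infinite monodromy and cannot be definable in any o-minimal structure, so the point is to control the behaviour of the period and theta maps near the cusps of the Siegel set. This delicate analysis is precisely what is carried out in the Peterzil--Starchenko paper, and once invoked the mixed case follows without further difficulty; the rest of the theorem is bookkeeping inside Pink's formalism.
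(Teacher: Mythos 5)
Your proposal is correct and takes essentially the same route as the paper, which simply cites Pink's thesis for parts (1) and (2) and Peterzil--Starchenko (via Remark 4.4 of the author's earlier paper) for part (3); you have unpacked those references into a sketch of what they contain. The one place you add content not in the paper's one-line proof is the combinatorial check that $\cF$ is a fundamental set via the semidirect-product action, and that check is correct.
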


Let $N\geqslant 3$ be even. Pink has also constructed an ample $\G_m$-torsor over $\mathfrak{A}_g(N)$ in terms of mixed Shimura varieties in \cite{PinkThesis}. In our purpose we only need:

\begin{thm}\label{AmpleLineBundleOverTheUniversalFamily}
There exists a $\G_m$-torsor $\mathfrak{L}_g(N)\rightarrow\mathfrak{A}_g(N)$, which is totally symmetric and relatively ample with respect to $\mathfrak{A}_g(N)\rightarrow\cA_g(N)$. Furthermore, any point $a\in\cA_g(N)$ corresponds to the principally polarized abelian variety $(\mathfrak{A}_g(N)_a,\mathfrak{L}_g(N)_a)$ with some level-$N$-structure.
\begin{proof} See \cite[2.25, 3.21, 10.5, 10.10]{PinkThesis}.
\end{proof}
\end{thm}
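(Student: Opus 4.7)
The plan is to realize $\mathfrak{L}_g(N)$ as a connected mixed Shimura variety sitting over $\mathfrak{A}_g(N)$, associated to a mixed Shimura datum obtained from $(P_{2g,\mathrm{a}},\cX_{2g,\mathrm{a}}^+)$ by enlarging the unipotent radical with a one-dimensional weight $-2$ piece. Concretely, take $U_{2g}=\G_{a,\Q}$ as in \eqref{AlternatingForm} and form the Heisenberg group $W_{2g}$, the central extension $1\to U_{2g}\to W_{2g}\to V_{2g}\to 1$ whose commutator is prescribed by $\Psi$. Set $P_{2g}:=W_{2g}\rtimes \GSp_{2g}$, where $\GSp_{2g}$ acts on $U_{2g}$ through the similitude character $\nu$. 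The resulting algebraic $\Q$-group has weight filtration $W_{-2}=U_{2g}$, $W_{-1}=W_{2g}$, $W_0=P_{2g}$.

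Next, lift each $x\in \cX_{2g,\mathrm{a}}^+$ to a homomorphism $\S\to P_{2g,\R}$ by specifying, on $U_{2g}(\R)\otimes_\R\C$, a pure Hodge structure of type $(-1,-1)$ normalized so that $\Psi$ defines a polarization. The collection $\cX_{2g}^+$ of such lifts is naturally a $U_{2g}(\C)$-torsor over $\cX_{2g,\mathrm{a}}^+$, and $(P_{2g},\cX_{2g}^+)$ is a connected mixed Shimura datum in the sense of \cite[2.1]{PinkThesis}. Setting $\Gamma_U(N):=N\Z\subset U_{2g}(\Q)$ and $\Gamma'(N):=\Gamma_U(N)\rtimes\Gamma_V(N)\rtimes\Gamma_G(N)$, one defines
\[
\mathfrak{L}_g(N):=\Gamma'(N)\backslash \cX_{2g}^+.
\]
The exponential $z\mapsto \exp(2\pi\sqrt{-1}z/N)$ identifies $\Gamma_U(N)\backslash U_{2g}(\C)$ with $\G_{m,\C}$, so the projection $\mathfrak{L}_g(N)\to\mathfrak{A}_g(N)$ induced by $P_{2g}\to P_{2g,\mathrm{a}}$ is canonically a $\G_m$-torsor.

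It remains to identify the associated line bundle on each fiber of $[\pi]$ with a polarization and to verify total symmetry. For $a\in\cA_g(N)$ the complex structure on $U_{2g}(\C)$ encoded by any lift $x$ above $a$ coincides with the one defining the theta bundle attached to $\Psi$ on $\mathfrak{A}_g(N)_a$; hence $\mathfrak{L}_g(N)_a$ is the $\G_m$-torsor of this theta bundle, which is ample because $\Psi$ is a principal polarization. Relative ampleness follows from this together with the smoothness and properness of $[\pi]$ on each fiber. Total symmetry, namely $[-1]^*\mathfrak{L}_g(N)\cong \mathfrak{L}_g(N)$ compatibly with the rigidification, is a consequence of the $[-1]$-invariance of $\Psi$ together with the choice of even level $N$, which kills the $2$-torsion sign obstruction to total symmetry (the same mechanism by which $L^{\otimes 2}$ is always totally symmetric for symmetric $L$).

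The main obstacle in turning this sketch into a complete proof lies in the last step: matching the Hodge-theoretic parameters in $\cX_{2g}^+$ with the analytic theta-function construction on the universal abelian scheme, and in particular fixing the cocycle normalization so that the resulting $\G_m$-torsor descends along $\Gamma'(N)$ and is totally symmetric rather than merely symmetric up to translation. All of this is carried out in \cite[2.25, 3.21, 10.5, 10.10]{PinkThesis}.
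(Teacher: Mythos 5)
Your sketch reconstructs exactly Pink's construction — the Heisenberg extension $P_{2g}=W_{2g}\rtimes\GSp_{2g}$ with weight $-2$ piece $U_{2g}$, the lifted Hodge parameters forming $\cX_{2g}^+$, and the quotient $\Gamma'(N)\backslash\cX_{2g}^+$ giving the $\G_m$-torsor — which is precisely the content of \cite[2.25, 3.21, 10.5, 10.10]{PinkThesis} that the paper simply cites as its proof. Since the paper offers no independent argument beyond this reference, your proposal matches its approach in substance and in the acknowledged reliance on Pink for the theta-bundle identification and total-symmetry normalization.
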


\begin{notation} In the rest of the paper, we shall always take $N$ to be even and larger than 3. Furthurmoer we write $\cA_g$, $\mathfrak{A}_g$ and $\mathfrak{L}_g$ for $\cA_g(N)$, $\mathfrak{A}_g(N)$ and $\mathfrak{L}_g(N)$ for simplicity.
\end{notation}

\section{Weakly special subvarieties of $\mathfrak{A}_g$}\label{SectionWeaklySpecialSubvariety}
In this section, we discuss weakly special subvarieties of $\mathfrak{A}_g$ (or more generally, of mixed Shimura varieties of Kuga type).

\subsection{}
The following definition is not exactly the original one given by Pink \cite[Definition 4.1(b)]{PinkA-Combination-o}, but it is not hard to verify their equivalence (see \cite[Proposition~4.4(a)]{PinkA-Combination-o} and \cite[Proposition~5.7]{GaoTowards-the-And}):
\begin{defn}\label{WeaklySpecialDefinition}
A subvariety $Y$ of $\mathfrak{A}_g$ is called \textbf{weakly special} if there exist a connected mixed Shimura subdatum $(Q,\cY^+)$ of $(P_{2g,\mathrm{a}},\cX^+_{2g,\mathrm{a}})$, a connected normal subgroup $N$ of $Q$ possessing no non-trivial torus quotient and a point $\tilde{y}\in\cY^+$ such that $Y=\unif(N(\R)^+\tilde{y})$.
\end{defn}
\begin{rmk}
\begin{enumerate}
\item Weakly special subvarieties of $\mathfrak{A}_g$ defined as above are automatically irreducible (\cite[Remark 5.3]{GaoTowards-the-And}).
\item
For an arbitrary connected mixed Shimura variety $S$ of Kuga type, its weakly special subvarieties are defined in the same way with $(P_{2g,\mathrm{a}},\cX^+_{2g,\mathrm{a}})$ replaced by the connected mixed Shimura datum associated with $S$. For more general connected mixed Shimura varieties, the ``$N(\R)^+$" in the definition should be replaced by ``$N(\R)^+U_N(\C)$" where $U_N$ is the so-called weight $-2$ part of $N$. We shall not go into details on this.
\end{enumerate}
\end{rmk}

\subsection{}\label{GeometricInterpretationWeaklySpecial}
The goal of this subsection is to prove Proposition~\ref{DescriptionWeaklySpecial}. Recall that $P_{2g,\mathrm{a}}$ is defined to be $V_{2g}\rtimes\GSp_{2g}$ with the natural representation of $\GSp_{2g}$ on $V_{2g}$. Therefore this induces the zero-section $\epsilon\colon(\GSp_{2g},\H^+_g)\hookrightarrow(P_{2g,\mathrm{a}},\cX^+_{2g,\mathrm{a}})$ of $\pi$. Remark that $\epsilon$ corresponds to the zero-section of $[\pi]\colon\mathfrak{A}_g\rightarrow\cA_g$.

\begin{prop}\label{IntersectionWithWeaklySpecial}
Let $B$ be an irreducible subvariety of $\cA_g$ and $X:=[\pi]^{-1}(B)$. Define $\cC$ to be the isotrivial part of $X\rightarrow B$, i.e. the largest isotrivial abelian subscheme of $X$ over $B$. Then
\begin{align*}
\{\text{translates of an abelian subscheme of }X\rightarrow B\text{ by a torsion section and then }\\
\text{by a constant section of }\cC\rightarrow B\}
=\{X\cap E|~\text{E weakly special in }\mathfrak{A}_g\}.
\end{align*}
\end{prop}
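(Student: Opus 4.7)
The plan is to establish both set inclusions, leveraging the semidirect product decomposition $P_{2g,\mathrm{a}}=V_{2g}\rtimes\GSp_{2g}$ and the uniformization \eqref{UniformizingSpace}.

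For the inclusion $(\supseteq)$, I would take a weakly special $E=\unif(N(\R)^+\tilde{y})$ with $N\triangleleft Q$ having no non-trivial torus quotient, and fix decompositions $Q=V_Q\rtimes G_Q$ and $N=V_N\rtimes H$ where $H\triangleleft G_Q$ is semisimple and $V_N\subset V_Q$ is a $G_Q$-invariant $\Q$-subspace. The vanishing of torus quotients of $N$ forces $V_N$ to contain no $H$-trivial subrepresentation. Writing $\tilde{y}=(v_0,x_0)\in V_{2g}(\R)\times\H_g^+$, the orbit $N(\R)^+\tilde{y}$ equals $\{(hv_0+v,hx_0)\mid v\in V_N(\R),\ h\in H(\R)^+\}$, so its projection to $\H_g^+$ is the totally geodesic orbit $H(\R)^+x_0$ and the fiber over each point is a translate of $V_N(\R)$ by $hv_0$. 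Intersecting with $X$ restricts to $B':=[\pi]E\cap B$: the subspace $V_N$ yields an abelian subscheme of $X|_{B'}\to B'$, the translation class $v_0\bmod V_N$ is $H$-fixed (using algebraicity of $E$ together with the absence of $H$-trivial subrepresentations in $V_N$), hence lies in the $H$-trivial subspace of $V_{2g}/V_N$, which corresponds precisely to the isotrivial part $\cC|_{B'}$ of $X|_{B'}\to B'$. Decomposing this $H$-fixed class into its rational and real parts yields the desired torsion section and constant section, and the latter extends to a constant section of $\cC\to B$ by definition.

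For the converse inclusion, I would reverse the construction: a translate of an abelian subscheme of $X|_{B'}\to B'$ by a torsion section and a (restriction of a) constant section of $\cC\to B$ is encoded by data $(B',V_N,v_0)$ where $B'\subset B$ is totally geodesic (hence lifts to a Shimura subdatum $(G_Q,Y_{G_Q}^+)\hookrightarrow(\GSp_{2g},\H_g^+)$), $V_N\subset V_{2g}$ is a $G_Q$-invariant $\Q$-subspace with no $G_Q^{\der}$-trivial subrepresentation (cutting out the abelian subscheme), and $v_0\in V_{2g}(\R)$ has $G_Q^{\der}$-fixed class modulo $V_N$ (encoding the torsion-plus-constant translation, since constant sections of $\cC$ come from the $G_Q^{\der}$-invariants in $V_{2g}$). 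Setting $N:=V_N\rtimes G_Q^{\der}$, which is normal in $Q:=V_Q\rtimes G_Q$ (for $V_Q$ chosen so that $V_N\subset V_Q$ and $v_0\in V_Q(\R)$) and has no non-trivial torus quotient, and taking $\tilde{y}:=(v_0,x_0)$ for $x_0$ a suitable lift in $Y_{G_Q}^+$, the weakly special $E:=\unif(N(\R)^+\tilde{y})$ satisfies $X\cap E$ equal to the given translate.

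The main obstacle I anticipate is the structural dictionary between the group-theoretic side (trivial subrepresentations of $H$, normal subgroups without torus quotient) and the geometric side (the isotrivial abelian subscheme $\cC$, torsion sections versus constant sections). Specifically, in the forward direction one must rigorously show that $v_0\bmod V_N$ is $H$-fixed and that its real part really does produce a constant section of $\cC|_{B'}$ that extends to $\cC\to B$; in the reverse direction one must build the Shimura subdatum so that $N$ is normal with no torus quotient. Establishing this precise dictionary between the Hodge-theoretic notion of isotriviality and the Mumford-Tate-theoretic invariants underlying weakly special subvarieties is the structural heart of the proof, and once in place it will simultaneously yield Proposition~\ref{DescriptionWeaklySpecial} as the case $B=[\pi]Y$.
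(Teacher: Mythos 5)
Your overall strategy --- encoding a weakly special $E=\unif(N(\R)^+\tilde{y})$ through a decomposition $N=V_N\rtimes H$ and matching this to the geometric triple (abelian subscheme, torsion section, constant section of $\cC$) --- is structurally the right one, but the mechanism you invoke is wrong. Your pivotal claim that ``the vanishing of torus quotients of $N$ forces $V_N$ to contain no $H$-trivial subrepresentation'' is false: for $N=V_N\rtimes H$ with $H$ semisimple, $N/[N,N]\cong V_N/[H,V_N]$ is a vector group, so $N$ has no non-trivial torus quotient no matter what $V_N$ is. The condition only forces $H=G_N$ to be semisimple and imposes nothing on $V_N$. Consequently your deduction that ``the translation class $v_0\bmod V_N$ is $H$-fixed'' is unjustified, and in fact false in general. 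The correct picture, established in Propositions~\ref{ShimuraSubDatum} and~\ref{WeaklySpecialOnTheTopDescription} of the paper, is that $N$ is in general only a \emph{conjugate} of $V_N\rtimes G_N$ by an element $(v_0,1)$ with $v_0\in V_{2g}(\Q)$, the Levi twist $\bar{v}_0\in(V_{2g}/V_Q)(\Q)$. After undoing this conjugation the orbit $N(\R)^+\tilde{y}$ becomes $\bigl(v_0+v+V_N(\R)\bigr)\times G_N(\R)^+\tilde{y}_G$, where $v$ is the component of $\tilde{y}_V-v_0$ in a $G_Q$-complement $V_N^\perp$ of $V_N$ in $V_Q$ on which $G_N$ acts trivially. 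Only $v$ is $G_N$-fixed; the rational part $v_0$ need not be, yet it still yields a torsion section after uniformization. Writing $N(\R)^+\tilde{y}=\{(hv_0+v,hx_0)\}$ with $N=V_N\rtimes H$ on the nose, as you do, already presupposes exactly the $H$-fixedness you then try to extract.

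The same misconception damages your converse direction, where you require $V_N$ to have ``no $G_Q^{\der}$-trivial subrepresentation.'' Abelian subschemes of $X\to B$ can perfectly well have isotrivial factors (any abelian subscheme of $\cC$ itself, for instance), whose $V_N$ is entirely $G_N$-trivial; your construction would miss all translates of such subschemes. You also leave unaddressed the monodromy step needed to identify the isotrivial part $\cC$ of $X\to B$ with the restriction to $X$ of the isotrivial part $\cC'$ of $X'\to B'$, where $B'$ is the smallest weakly special subvariety of $\cA_g$ containing $B$. This is equation~\eqref{IsotrivialPartCompatibility} in the paper, and it relies on $G_N$ being the connected algebraic monodromy group of $(B')^{\sm}$ (via \cite[3.6, 3.7]{MoonenLinearity-prope}), so that the $G_N$-invariants of $V_{2g}$ coincide with the invariants of the topological monodromy of $B$; without this identification your dictionary between ``$H$-trivial subspace of $V_{2g}/V_N$'' and $\cC$ remains unproved.
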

The constant sections of $\cC\rightarrow B$ are defined as follows: By definition of isotriviality, there exists a finite cover $B^\prime\rightarrow B$ such that $\cC\times_{B}B^\prime\cong\cC_{b_0}\times B^\prime$ for any $b_0\in B$. A \textbf{constant section of $\cC\rightarrow B$} is then defined to be the image of the graph of a constant morphism $B^\prime\rightarrow\cC_{b_0}$ in $\cC\times_{B}B^\prime$ under the projection $\cC\times_{B}B^\prime\rightarrow\cC$.

It is clear that Proposition~\ref{DescriptionWeaklySpecial} follows immediately from Proposition~\ref{IntersectionWithWeaklySpecial} and \cite[4.3]{MoonenLinearity-prope}.

The following proposition is not hard to prove using Levi decomposition \cite[Theorem 2.3]{AlgebraicGroupBible}. Another (partial) proof can be found in \cite[Section 5.1]{LopuhaaPinks-conjectur}.

\begin{prop}\label{ShimuraSubDatum}
To give a Shimura subdatum $(Q,\cY^+)$ of $(P_{2g,\mathrm{a}},\cX^+_{2g,\mathrm{a}})$ is equivalent to giving:
\begin{itemize}
\item a pure Shimura subdatum $(G_Q,\cY^+_{G_Q})$ of $(\GSp_{2g},\H^+_g)$;
\item a $G_Q$-submodule $V_Q$ of $V_{2g}$ ($V_{2g}$ is a $\GSp_{2g}$-module, and therefore a $G_Q$-module);
\item an element $\bar{v}_0\in (V_{2g}/V_Q)(\Q)$.
\end{itemize}
\begin{proof} We only give the constructions here.
\begin{enumerate}
\item Given $(Q,\cY^+)\subset(P_{2g,\mathrm{a}},\cX^+_{2g,\mathrm{a}})$, we have $V_Q:=\cR_u(Q)<\cR_u(P_{2g,\mathrm{a}})=V_{2g}$. Therefore the inclusion $(Q,\cY^+)\subset(P_{2g,\mathrm{a}},\cX^+_{2g,\mathrm{a}})$ induces
\[
(G_Q,\cY^+_{G_Q}):=(Q,\cY^+)/V_Q\subset(\GSp_{2g},\H^+_g)=(P_{2g,\mathrm{a}},\cX^+_{2g,\mathrm{a}})/V_{2g}.
\]
The fact that $V_Q$ is a $G_Q$-submodule of $V_{2g}$ is clear. Now it suffices to find $\bar{v}_0\in(V_{2g}/V_Q)(\Q)$.

Consider the group $Q^\natural:=(V_{2g}/V_Q)\rtimes G_Q$, where the action is induced by the natural one of $G_Q$ on $V_{2g}$. By definition, $Q^\natural=\pi^{-1}(G_Q)/V_Q$. Now the inclusion $(Q,\cY^+)\subset(P_{2g,\mathrm{a}},\cX^+_{2g,\mathrm{a}})$ also induces an inclusion (which we call $i^\prime$)
\[
G_Q=Q/V_Q\subset \pi^{-1}(G_Q)/V_Q=Q^\natural.
\]
We have the following diagram, whose solide arrows commute:
\[
\begin{diagram}
1 &\rTo &1 &\rTo &G_Q &\rTo^{=} &G_Q &\rTo &1 \\
& &\dTo & &\dTo^{i^\prime} & & \dTo \\
1 &\rTo &V_{2g}/V_Q &\rTo &Q^\natural &\rTo\upperarrow{s_Q} &G_Q &\rTo &1
\end{diagram}
\]
where $s_Q$ is the homomorphism $G_Q=\{0\}\rtimes G_Q<(V_{2g}/V_Q)\rtimes G_Q=Q^\natural$. Now $i^\prime$ and $s_Q$ are two Levi-decompositions for $Q^\natural$. By \cite[Theorem~2.3]{AlgebraicGroupBible}, $s_Q$ equals the conjugation of $i^\prime$ by an element $\bar{v}_0\in(V_{2g}/V_Q)(\Q)$. Moreover, the choice of $\bar{v}_0$ is unique.

\item
Conversely, given the three data as in the Proposition, the underlying group $Q$ is the conjugate of $V_Q\rtimes G_Q<V_{2g}\rtimes\GSp_{2g}$ (compatible Levi-decompositions) by $(v_0,1)$ in $P_{2g,\mathrm{a}}$. The space
\[
\cY^+=\big(v_0+V_Q(\R)\big)\times\cY^+_{G_Q}\subset V_{2g}(\R)\times\H^+_g\cong\cX^+_{2g,\mathrm{a}}
\]
where $v_0$ is any lift of $\bar{v}_0$ to $V_{2g}(\Q)$.
\end{enumerate}
\end{proof}
\end{prop}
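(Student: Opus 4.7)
The plan is to establish the correspondence by the standard recipe for mixed Shimura data: extract the pure part from $Q$ by modding out the unipotent radical, and then measure the failure of $Q$ to sit as the ``standard'' semidirect product inside $\pi^{-1}(G_Q)$ by a single cohomology class realized by an element of $(V_{2g}/V_Q)(\Q)$.

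For the forward direction, starting from $(Q,\cY^+)\subset(P_{2g,\mathrm{a}},\cX^+_{2g,\mathrm{a}})$, I would first define $V_Q:=\cR_u(Q)$. Since $\cR_u(P_{2g,\mathrm{a}})=V_{2g}$ and unipotent radicals are preserved under inclusions of algebraic groups of the relevant type, $V_Q$ sits inside $V_{2g}$, and conjugation by $G_Q:=Q/V_Q$ makes it a $G_Q$-submodule. Quotienting the whole datum by $V_Q$ gives $(G_Q,\cY^+_{G_Q})\hookrightarrow(\GSp_{2g},\H_g^+)$, which one verifies is a pure Shimura subdatum using that the weight filtration on $V_{2g}$ is inherited. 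The subtle point is producing $\bar{v}_0$. Here I would consider the intermediate group $Q^\natural:=\pi^{-1}(G_Q)/V_Q=(V_{2g}/V_Q)\rtimes G_Q$; inside it one has two copies of $G_Q$, namely the tautological Levi $s_Q\colon G_Q\hookrightarrow\{0\}\rtimes G_Q$ and the image $i'$ of $Q/V_Q=G_Q$ coming from the embedding $Q\hookrightarrow\pi^{-1}(G_Q)$. Both are Levi subgroups of $Q^\natural$, so by Mostow's theorem on conjugacy of Levi decompositions (over $\Q$, since both subgroups are defined over $\Q$) they differ by conjugation by a unique element $\bar{v}_0$ of the unipotent radical $(V_{2g}/V_Q)(\Q)$.

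For the reverse direction, given the three pieces of data, I would lift $\bar{v}_0$ to some $v_0\in V_{2g}(\Q)$ and define $Q$ as the conjugate of $V_Q\rtimes G_Q\subset V_{2g}\rtimes\GSp_{2g}=P_{2g,\mathrm{a}}$ by $(v_0,1)$; the ambiguity of the lift is absorbed in $V_Q$, so $Q$ is well defined. Then $\cY^+$ is forced to be the $Q(\R)^+$-orbit of $\mathrm{int}(v_0,1)$ applied to $\cY^+_{G_Q}$, and under the identification $\cX^+_{2g,\mathrm{a}}\cong V_{2g}(\R)\rtimes\H_g^+$ of \eqref{UniformizingSpace} this is exactly $(v_0+V_Q(\R))\times\cY^+_{G_Q}$. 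Finally I would check the Shimura-datum axioms for $(Q,\cY^+)$: the Hodge cocharacters take values in $Q_\R$ because $Q$ is $\GSp_{2g}(\R)$-stable up to the $v_0$-translation, and the polarization and weight conditions descend from the ambient datum.

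The main obstacle, in my view, is twofold. First, one must argue carefully that $\bar{v}_0$ can be chosen over $\Q$ rather than just over $\R$ — this uses the fact that both $i'$ and $s_Q$ are $\Q$-morphisms and that Mostow's theorem on conjugacy of Levi decompositions works over any field of characteristic zero, so the conjugating element is automatically $\Q$-rational; in particular, there is no obstruction coming from Galois cohomology of unipotent groups (which is trivial). Second, one should verify uniqueness: $\bar{v}_0$ is independent of the chosen lift $v_0$ precisely because translating $v_0$ by an element of $V_Q(\Q)$ does not change the resulting $Q$, so that the element of $(V_{2g}/V_Q)(\Q)$ is canonically attached to $(Q,\cY^+)$ and the two constructions are mutually inverse. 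Beyond these, the rest amounts to checking that the natural isomorphisms $V_{2g}\rtimes\GSp_{2g}\supset V_Q\rtimes G_Q$ and the Levi structure are compatible with the Shimura-datum structure, which is essentially formal once the group-theoretic picture is in place.
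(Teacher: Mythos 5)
Your proposal follows the same approach as the paper's proof: define $V_Q=\cR_u(Q)$, pass to $(G_Q,\cY^+_{G_Q})=(Q,\cY^+)/V_Q$, form $Q^\natural=\pi^{-1}(G_Q)/V_Q=(V_{2g}/V_Q)\rtimes G_Q$, compare the two Levi copies of $G_Q$ inside it via Mostow's conjugacy theorem over $\Q$ to extract $\bar v_0$, and reverse the construction by conjugating $V_Q\rtimes G_Q$ by a lift $(v_0,1)$. The reasoning and the cited tool (Levi conjugacy, which is exactly \cite[Theorem 2.3]{AlgebraicGroupBible}) match the paper's, and your additional remarks on $\Q$-rationality and uniqueness are consistent with what the paper asserts.
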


\begin{prop}\label{WeaklySpecialOnTheTopDescription}
A subvariety $Y$ of $\mathfrak{A}_g$ is weakly special iff there exist
\begin{itemize}
\item a pure Shimura subdatum $(G_Q,\cY^+_{G_Q})$ of $(\GSp_{2g},\H^+_g)$;
\item a point $v_0\in V_{2g}(\Q)$;
\item a normal semi-simple connected subgroup $G_N$ of $G_Q$ and a point $\tilde{y}_G\in\cY^+_{G_Q}$;
\item a $G_Q$-submodule $V_N$ of $V_{2g}$;
\item a $G_Q$-submodule $V_N^\perp$ of $V_{2g}$ on which $G_N$ acts trivially, and a point $v\in V_N^\perp(\R)$
\end{itemize}
such that
\[
Y=\unif\Big(\big(v_0+v+V_N(\R)\big)\times G_N(\R)^+\tilde{y}_G\Big).
\]
Here $\big(v_0+v+V_N(\R)\big)\times G_N(\R)^+\tilde{y}_G\subset V_{2g}(\R)\times\H^+_g\cong\cX^+_{2g,\mathrm{a}}$.
\begin{proof}
\begin{enumerate}
\item Given a weakly special subvariety $Y$ of $\mathfrak{A}_g$, let $(Q,\cY^+)$, $N$ and $\tilde{y}$ be as in Definition~\ref{WeaklySpecialDefinition}. By Proposition~\ref{ShimuraSubDatum}, $(Q,\cY^+)$ corresponds to a Shimura subdatum $(G_Q,\cY^+_{G_Q})$ of $(\GSp_{2g},\H^+_g)$, a $G_Q$-submodule $V_Q$ of $V_{2g}$ and a point $\bar{v}_0\in(V_{2g}/V_Q)(\Q)$. Let $v_0$ be any lift of $\bar{v}_0$ to $V_{2g}(\Q)$. Let $G_N:=N/(V_Q\cap N)$, then $G_N$ is a connected normal subgroup of $G_Q$, and hence is reductive. Since $N$ possesses no non-trivial torus quotient, $G_N$ is semi-simple. Let $\tilde{y}_G:=\pi(\tilde{y})$.

Let $V_N:=V_Q\cap N$, then $V_N$ is a $G_Q$-submodule of $V_Q$ since $N$ is normal in $Q$. By \cite[Corollary 2.14]{GaoTowards-the-And}, there exists a $G_Q$-submodule $V_N^\perp$ of $V_Q$ such that $V_Q=V_N\oplus V_N^\perp$ and $G_N$ acts trivially on $V_N^\perp$. Write $\tilde{y}=(\tilde{y}_V,\tilde{y}_G)\in(v_0+V_Q(\R))\times\cY^+_{G_Q}=\cY^+\subset\cX^+_{2g,\mathrm{a}}$ (here we use the second part of the proof of Proposition~\ref{ShimuraSubDatum}).

To simplify the computation below, we introduce a new Shimura subdatum $(Q^\prime,\cY^{\prime})$ of $(P_{2g,\mathrm{a}},\cX^+_{2g,\mathrm{a}})$: $(Q^\prime,\cY^{\prime})$ is defined to be the conjugate of $(Q,\cY^+)$ by $(-v_0,1)$. By the second part of the proof of Proposition~\ref{ShimuraSubDatum}, $(Q^\prime,\cY^{\prime})=(V_Q\rtimes G_Q,V_Q(\R)\times\cY^+_{G_Q})\subset(V_{2g}\rtimes\GSp_{2g},\cX^+_{2g,\mathrm{a}})$.
Let $N^\prime:=V_N\rtimes G_N<V_{2g}\rtimes\GSp_{2g}$, then $N^\prime$ is the conjugate of $N$ by $(-v_0,1)$. Let $\tilde{y}^\prime:=(\tilde{y}_V-v_0,\tilde{y}_G)\in\cY^{\prime+}$.

Let $v$ be the $V_N^\perp(\R)$-factor of $\tilde{y}_V$. Then since $G_N$ acts trivially on $V_N^\perp$, we have
\[
N^\prime(\R)^+\tilde{y}^\prime=\big(v+V_N(\R)\big)\times G_N(\R)^+\tilde{y}_G\subset\cY^{\prime+}.
\]
Hence $N(\R)^+\tilde{y}=\big(v_0+v+V_N(\R)\big)\times G_N(\R)^+\tilde{y}_G$.
Now the conclusion follows.

\item Conversely given all these data, let the Shimura subdatum $(Q,\cY^+)$ be the one obtained from $(G_Q,\cY^+_{G_Q})$, $V_N\oplus V_N^\perp$ and $v_0$ by Proposition~\ref{ShimuraSubDatum}. 
Let $N$ be the subgroup of $Q$ which is defined to be $V_N\rtimes G_N$ conjugated by $(v_0,1)$ in $P_{2g,\mathrm{a}}$. Then since $G_N$ acts trivially on $V_N^\perp$, $N\lhd Q$. Let $\tilde{y}:=(v_0+v,\tilde{y}_G)$. Now we have
\[
\big(v_0+v+V_N(\R)\big)\times G_N(\R)^+\tilde{y}_G=N(\R)^+\tilde{y}.
\]
The group $N$ is by definition connected and possessing no non-trivial torus quotient since $G_N$ is semi-simple. Hence $Y$ is weakly special by definition.
\end{enumerate}
\end{proof}
\end{prop}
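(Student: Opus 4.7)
The plan is to translate the abstract data of Definition~\ref{WeaklySpecialDefinition} — a connected mixed Shimura subdatum $(Q,\cY^+)\subset(P_{2g,\mathrm{a}},\cX^+_{2g,\mathrm{a}})$, a connected normal subgroup $N\lhd Q$ with no non-trivial torus quotient, and a base point $\tilde y\in\cY^+$ — into the more concrete package listed in the proposition via Proposition~\ref{ShimuraSubDatum}. The dictionary runs in both directions through the same data, so I would set up the forward direction in full and then reverse the construction.

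For the forward direction, start with $(Q,\cY^+)$, $N$, $\tilde y$ as in the definition. Proposition~\ref{ShimuraSubDatum} attaches to $(Q,\cY^+)$ a triple $\bigl((G_Q,\cY^+_{G_Q}),\,V_Q,\,\bar v_0\bigr)$ with $V_Q=\cR_u(Q)\le V_{2g}$; pick any lift $v_0\in V_{2g}(\Q)$ of $\bar v_0$. The constructive part of that proof identifies $\cY^+$ with $(v_0+V_Q(\R))\times\cY^+_{G_Q}$, so I write $\tilde y=(\tilde y_V,\tilde y_G)$ accordingly and set $V_N:=V_Q\cap N=\cR_u(N)$; this is a $G_Q$-submodule of $V_Q$ because $N\lhd Q$. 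Define $G_N:=N/V_N$, which injects into $G_Q$ as a connected normal subgroup; reductivity of $G_N$ together with the no-torus-quotient hypothesis on $N$ forces $G_N$ to be semi-simple. Apply \cite[Corollary 2.14]{GaoTowards-the-And} to split $V_Q=V_N\oplus V_N^\perp$ as $G_Q$-modules with $G_N$ acting trivially on $V_N^\perp$, and let $v$ be the $V_N^\perp(\R)$-component of $\tilde y_V-v_0$. Finally compute the orbit: after conjugating by $(-v_0,1)$ so that $Q$ becomes an honest semi-direct product and $\tilde y$ becomes $(\tilde y_V-v_0,\tilde y_G)$, the $N$-orbit is $(v+V_N(\R))\times G_N(\R)^+\tilde y_G$, and undoing the conjugation produces the desired expression.

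For the backward direction, given the listed data, I would reverse the procedure: set $V_Q:=V_N\oplus V_N^\perp$, use Proposition~\ref{ShimuraSubDatum} to build $(Q,\cY^+)$ from $(G_Q,\cY^+_{G_Q})$, $V_Q$ and $v_0$, and inside $Q$ take $N$ to be the conjugate of $V_N\rtimes G_N$ by $(v_0,1)$. Normality of $N$ in $Q$ reduces to normality of $V_N\rtimes G_N$ in $V_Q\rtimes G_Q$, which uses that $V_N$ is $G_Q$-stable, that $G_N\lhd G_Q$, and that $G_N$ acts trivially on $V_N^\perp$; the absence of a non-trivial torus quotient of $N$ follows from $V_N$ being unipotent and $G_N$ being semi-simple. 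Setting $\tilde y:=(v_0+v,\tilde y_G)$ and rerunning the orbit computation recovers the claimed set.

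The one place requiring care is the orbit computation and the role of the trivial action of $G_N$ on $V_N^\perp$. Conjugation by $(v_0,1)$ is harmless only because $G_N$ fixes $V_N^\perp$ pointwise; otherwise translating the base point by a vector with non-zero $V_N^\perp$-component would fail to commute with the $G_N$-action and the orbit would not split as a product of an affine $V_N$-coset with a $G_N(\R)^+$-orbit in $\cY^+_{G_Q}$. Securing the existence of a $G_Q$-equivariant complement $V_N^\perp$ on which $G_N$ acts trivially — invoking reductivity of $G_Q$ and the fact that the $G_N$-action on $V_Q/V_N$ is trivial because $G_N=N/V_N$ acts on itself by inner automorphisms — is the technical crux.
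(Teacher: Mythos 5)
Your proposal is correct and follows essentially the same route as the paper: translate via Proposition~\ref{ShimuraSubDatum}, conjugate by $(\mp v_0,1)$ to reduce to an honest semidirect product, split $V_Q=V_N\oplus V_N^\perp$ using Corollary~2.14 of \cite{GaoTowards-the-And}, and compute the $N$-orbit using that $G_N$ acts trivially on $V_N^\perp$. One small slip in your closing paragraph: the conjugation by $(v_0,1)$ is harmless unconditionally (it is just a coordinate change), and the trivial $G_N$-action on $V_N^\perp$ is needed for the separate reason that it makes the $N$-orbit of $\tilde y'$ factor as $\bigl(v+V_N(\R)\bigr)\times G_N(\R)^+\tilde y_G$; likewise, your heuristic that triviality of the $G_N$-action on $V_Q/V_N$ comes from $G_N$ ``acting on itself by inner automorphisms'' is not the real reason (it follows from $N\lhd Q$, which forces $(1-h)V_Q\subset V_N$ for all $h\in G_N$), but since you only offered it as a gloss on the cited corollary this does not affect the argument.
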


Now we can prove Proposition~\ref{IntersectionWithWeaklySpecial}:
\begin{proof}[Proof of Proposition~\ref{IntersectionWithWeaklySpecial}]
\begin{enumerate}
\item Prove ``$\supset$". For this it suffices to prove:

\textit{~~For any weakly special subvariety $Y$ of $\mathfrak{A}_g$,
$Y$ is the translate of an abelian subscheme of $[\pi]^{-1}([\pi]Y)$ (over $[\pi]Y$) by a torsion section and then by a section of the isotrivial part of $[\pi]^{-1}[\pi]Y\rightarrow[\pi]Y$.}

Let $Y$ be a weakly special subvariety of $\mathfrak{A}_g$. Then associated to $Y$ there are data as in Proposition~\ref{WeaklySpecialOnTheTopDescription} and
\[
Y=\unif\Big(\big(v_0+v+V_N(\R)\big)\times G_N(\R)^+\tilde{y}_G\Big).
\]
Let $B^\prime:=[\pi]Y$ and $X^\prime:=[\pi]^{-1}(B^\prime)$.

Now $X^\prime\rightarrow B^\prime$ is an abelian scheme.
Since $V_N$ is a $G_Q$-submodule of $V_{2g}$, $\unif\big(V_N(\R)\times G_N(\R)^+\tilde{y}_G\big)$ is an abelian subscheme of $X^\prime$ over $B^\prime$. Therefore,
\[
\unif\Big(\big(v_0+V_N(\R)\big)\times G_N(\R)^+\tilde{y}_G\Big)
\]
is the translate of $B^\prime$ by a torsion section of $X^\prime\rightarrow B^\prime$. But $v\in V_N^\perp(\R)$ and $G_N$ acts trivially on $V_N^\perp$, so $\unif\big(V_N^\perp(\R)\times G_N(\R)^+\tilde{y}_G\big)$ is an isotrivial abelian scheme over $B^\prime$. Therefore $Y$ is the translate of an abelian subscheme of $X^\prime\rightarrow B^\prime$ by a torsion section and then by a section of the isotrivial part of $X^\prime\rightarrow B^\prime$.

\item Prove ``$\subset$". 
Let $Y$ be a subvariety of $X$ such that $Y$ is the translate of an abelian subscheme of $X\rightarrow B$ translated by a torsion section and then by a section of $\cC\rightarrow B$, where $\cC\rightarrow B$ is the isotrivial part of $X\rightarrow B$. Let us find a weakly special subvariety $E$ of $\mathfrak{A}_g$ associated with the data in Proposition~\ref{WeaklySpecialOnTheTopDescription} such that $Y=E\cap X$.

Let $B^\prime$ be the smallest weakly special subvariety of $\cA_g$ containing $B$. Then by definition there exist a Shimura subdatum $(G_Q,\cY^+_{G_Q})$, a connected semi-simple normal subgroup $G_N$ of $G_Q$ and a point $\tilde{y}_G\in\cY^+_{G_Q}$ such that $B^\prime=\unif_G\big(G_N(\R)^+\tilde{y}_G\big)$. Moreover by \cite[3.6, 3.7]{MoonenLinearity-prope}, $G_N$ is the connected algebraic monodromy group of $(B^{\prime})^{\sm}$, i.e. the neutral component of the Zariski closure of $\Gamma_{B^{\prime\sm}}:=$the image of $\pi_1((B^\prime)^{\sm})\rightarrow\pi_1(\cA_g)=\Gamma_G$.

Let $X^\prime:=[\pi]^{-1}(B^\prime)$. Then the isotrivial part $\cC^\prime$ of $X^\prime\rightarrow B^\prime$ is
\[
\unif\big(V^\prime(\R)\times G_N(\R)^+\tilde{y}_G\big),
\]
where $V^\prime$ is the largest $G_Q$-submodule of $V_{2g}$ on which $G_N$ acts trivially. This $V^\prime$ is the $V_N^\perp$ we want in Proposition~\ref{WeaklySpecialOnTheTopDescription}.

A key step is to prove that as subvarieties of $\mathfrak{A}_g$, we have
\begin{equation}\label{IsotrivialPartCompatibility}
\cC=\cC^\prime\cap X
\end{equation}
It is clear that $\cC^\prime\cap X\subset\cC$. For the other inclusion, suppose that $\cC$ is defined by the $G_Q$-submodule $V^{\prime\prime}$ of $V_{2g}$ (i.e. $\cC=\unif(V^{\prime\prime}(\R)\times\tilde{B})$ for $\tilde{B}:=\unif_G^{-1}(B)$), then $\Gamma_{B^{\prime\sm}}$ acts trivially on $V^{\prime\prime}$. However the action of $G$ on $V_{2g}$ is algebraic, therefore $\bar{\Gamma_{B^{\prime\sm}}}^{\Zar}$ acts trivially on $V^{\prime\prime}$. So $G_N$ acts trivially on $V^{\prime\prime}$. By the maximality of $V^\prime$, $V^{\prime\prime}\subset V^\prime$. So $\cC\subset\cC^\prime$. Now \eqref{IsotrivialPartCompatibility} follows.

Now since $Y$ is the translate of an abelian subscheme by a torsion section and then by a section of $\cC\rightarrow B$, there exists, by \eqref{IsotrivialPartCompatibility}, a $G_Q$-submodule $V_N$ of $V_{2g}$ such that
\[
Y=\unif\Big(\big(v_0+v+V_N(\R)\big)\times\tilde{B}\Big)
\]
where $v_0\in V_{2g}(\Q)$ corresponds to the torsion section and $v\in V^\prime(\R)$ corresponds to the section of $\cC\rightarrow B$.
In other words,
\[
Y=E\cap X\text{, where }E=\unif\Big(\big(v_0+v+V_N(\R)\big)\times G_N(\R)^+\tilde{y}_G\Big)
\]
and $E$ is the weakly special subvariety of $\mathfrak{A}_g$ we desire.
\end{enumerate}
\end{proof}

\subsection{Ax-Lindemann} In this subsection, we summarize some results regarding the mixed Ax-Lindemann theorem. All the results stated in this subsection hold for arbitrary connected mixed Shimura varieties, and in particular for $\mathfrak{A}_g$.

In this subsection, let $S$ be a connected mixed Shimura variety associated with $(P,\cX^+)$ and let $\unif\colon\cX^+\rightarrow S$ be the uniformization. An example for this is $\mathfrak{A}_g$ and $(P_{2g,\mathrm{a}},\cX^+_{2g,\mathrm{a}})$. As is explained in \cite[Proposition 4.1]{GaoTowards-the-And}, there exists a complex algebraic variety $\cX^\vee$, which is the total space of a holomorphic vector bundle (of rank $g$ in the case of $(P_{2g,\mathrm{a}},\cX^+_{2g,\mathrm{a}})$) over a complex projective variety, such that $\cX^+\hookrightarrow\cX^\vee$ makes $\cX^+$ a semi-algebraic\footnote{For any positive integer $N$, a semi-algebraic set of $\R^N$ is a subset defined by a finite sequence of $\R$-polynomial equations and inequalities, or any finite union of such sets.} and open (in the usual topology) subset of $\cX^\vee$.
\begin{defn}\label{AlgebraicityOnTheTop}
Let $\tilde{Y}$ be an analytic subvariety of $\cX^+$, then
\begin{enumerate}
\item $\tilde{Y}$ is called an irreducible algebraic subset of $\cX^+$ if it is an analytically irreducible component of the intersection of its Zariski closure in $\cX^\vee$ and $\cX^+$;
\item $\tilde{Y}$ is called algebraic if it is a finite union of irreducible algebraic subsets of $\cX^+$.
\end{enumerate}
\end{defn}

The following Ax-Lindemann theorem is due to Gao \cite{GaoTowards-the-And}:
\begin{thm}\label{Ax-Lindemann}
Let $\tilde{Z}$ be a semi-algebraic subset of $\cX^+$. Then any irreducible component of $\bar{\unif(\tilde{Z})}^{\Zar}$ is a weakly special subvariety of $S$.
\begin{proof} (see the forthcoming thesis \cite[Theorem~7.4]{GaoThesis})
Recall that a connected semi-algebraic subset of $\cX^+$ is called \textbf{irreducible} if its $\R$-Zariski closure in $\cX^\vee$ is an irreducible real algebraic variety. Note that any semi-algebraic subset of $\cX^+$ has only finitely many connected irreducible components. Let $\tilde{Z}^\prime$ be any connected irreducible component of $\tilde{Z}$. It suffices to prove that every irreducible component of $\bar{\unif(\tilde{Z})^\prime}^{\Zar}$ is weakly special.

Let $Y:=\bar{\unif(\tilde{Z}^\prime)}^{\Zar}$ and let $\tilde{W}$ be a connected irreducible semi-algebraic subset of $\cX^+$ which contains $\tilde{Z}^\prime$ and is contained in $\unif^{-1}(Y)$, maximal for these properties. Then
\[
Y=\bar{\unif(\tilde{W})}^{\Zar}.
\]
Now \cite[Lemma~4.1]{PilaAbelianSurfaces} claims that $\tilde{W}$ is algebraic in the sense of Definition~\ref{AlgebraicityOnTheTop}. Then any complex analytic irreducible component $\tilde{W}^\prime$ of $\tilde{W}$ is an irreducible algebraic subset of $\cX^+$ which is contained in $\unif^{-1}(Y)$, maximal for these properties. But then \cite[Theorem~1.2]{GaoTowards-the-And} tells us that $\unif(\tilde{W}^\prime)$ is a weakly special subvariety of $S$, and in particular a closed irreducible algebraic subvariety of $S$. Now $Y$ is the Zariski closure of $\unif(\tilde{W}^\prime)$ for $\tilde{W}^\prime$ running over the complex analytic irreducible components of $\tilde{W}$. Hence any irreducible component of $Y$ equals $\unif(\tilde{W}^\prime)$ for some $\tilde{W}^\prime$, and hence is a weakly special subvariety of $S$.
\end{proof}
\end{thm}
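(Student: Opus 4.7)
The plan is to follow the Pila--Zannier template: bootstrap the semi-algebraic set $\tilde Z$ to a genuinely complex-algebraic one by means of the o-minimal counting machinery, and then invoke the already established Ax--Lindemann theorem for complex-algebraic subsets of the uniformizing space.

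First I would reduce to the irreducible case. Any semi-algebraic subset of $\cX^+$ has only finitely many connected irreducible components, so it suffices to treat a single connected irreducible semi-algebraic piece $\tilde Z^\prime$ of $\tilde Z$. Fix an irreducible component $Y$ of $\overline{\unif(\tilde Z^\prime)}^{\Zar}$ and let $\tilde W \supset \tilde Z^\prime$ be a connected irreducible semi-algebraic subset of $\cX^+$ contained in $\unif^{-1}(Y)$, maximal for these properties. By construction we still have $Y = \overline{\unif(\tilde W)}^{\Zar}$, and it is now enough to prove that each irreducible component of this Zariski closure is weakly special.

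The core of the argument, and the step I expect to be the main obstacle, is to upgrade $\tilde W$ from semi-algebraic to complex-algebraic in the sense of Definition~\ref{AlgebraicityOnTheTop}. The standard route, going back to Pila (Lemma~4.1 of \cite{PilaAbelianSurfaces}), is to study the set of $g \in P(\R)$ such that a positive-dimensional piece of $g\tilde W$ still lies in $\unif^{-1}(Y)$. One observes that every $g\in\Gamma(N)$ with $g\tilde W \cap \tilde W$ of full dimension in $\tilde W$ contributes, combines this with the definability of $\unif|_{\cF}$ in $\R_{an,\exp}$ provided by Theorem~\ref{UniversalFamilyTheorem}.(3) to produce a definable family containing many integer points, and then applies the Pila--Wilkie counting theorem. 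This forces a positive-dimensional algebraic stabilizer; iterating and invoking the maximality of $\tilde W$, one concludes that the complex Zariski closure of $\tilde W$ in $\cX^\vee$ is still contained in $\unif^{-1}(Y)$, which is exactly the statement that $\tilde W$ is complex-algebraic.

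Once this upgrade is in place, pick any complex analytic irreducible component $\tilde W^\prime$ of $\tilde W$; it is an irreducible algebraic subset of $\cX^+$ contained in $\unif^{-1}(Y)$ and maximal for these properties. Applying \cite[Theorem~1.2]{GaoTowards-the-And}, which is the Ax--Lindemann theorem for complex-algebraic subsets in the uniformizing space of a mixed Shimura variety of Kuga type, we obtain that $\unif(\tilde W^\prime)$ is a weakly special subvariety of $S$, hence a closed irreducible algebraic subvariety. Finally, $Y$ is the Zariski closure of the (finite) union of the $\unif(\tilde W^\prime)$ as $\tilde W^\prime$ runs over the analytic irreducible components of $\tilde W$; since each $\unif(\tilde W^\prime)$ is already closed and irreducible, any irreducible component of $Y$ must coincide with one such $\unif(\tilde W^\prime)$, and is therefore weakly special. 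This completes the plan.
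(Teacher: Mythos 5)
Your proposal follows the paper's argument step for step: reduce to connected irreducible semi-algebraic pieces, maximalize to a semi-algebraic $\tilde{W}$, upgrade it to a complex-algebraic set via the counting argument behind \cite[Lemma~4.1]{PilaAbelianSurfaces}, and finish with the complex-algebraic Ax--Lindemann theorem \cite[Theorem~1.2]{GaoTowards-the-And}. One small slip in the setup, though: you fix an \emph{irreducible component} $Y$ of $\overline{\unif(\tilde{Z}')}^{\Zar}$ before constructing $\tilde{W}$, and then ask for $\tilde{Z}'\subset\tilde{W}\subset\unif^{-1}(Y)$; this implicitly requires $\unif(\tilde{Z}')\subset Y$, i.e.\ that the Zariski closure of $\unif(\tilde Z')$ is already irreducible, which need not hold. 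The paper instead takes $Y$ to be the full (possibly reducible) Zariski closure $\overline{\unif(\tilde{Z}')}^{\Zar}$, performs the maximality argument there, and only passes to irreducible components of $Y$ at the very end. Your closing paragraph in fact already treats $Y$ in this latter way, so dropping the word ``component'' from its initial definition makes the proposal match the paper exactly.
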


\section{Generalized Hecke orbit}\label{SectionGeneralizedHeckeOrbit}
In this section, we discuss the matrix expression of a polarized isogeny and then compute the generalized Hecke orbit of a point of $\mathfrak{A}_g$.

\subsection{Polarized isogenies and their matrix expressions}\label{SectionRationalRepresentationIsogeny}

Let $b\in\cA_g$. Denote by $A_b=\mathfrak{A}_{g,b}$ and denote by $\lambda_b\colon A_b\xrightarrow{\sim}A_b^\vee$ the principal polarization induced by $\mathfrak{L}_{g,b}$. Then the point $b$ corresponds to the polarized abelian variety $(A_b,\lambda_b)$. Let $\cB$ be a symplectic basis of $H_1(A_b,\Z)$ with respect to the polarization $\lambda_b$. Let $\tilde{b}\in\H_g^+$ be the period matrix of $A_b$ with respect to the basis $\cB$. In this subsection, we fix $\cB$ to be the $\Q$-basis of $V_{2g}$.

Consider all points $b^\prime\in\cA_g$ such that there exists a polarized isogeny
\[
f\colon (A_b,\lambda_b)\rightarrow(A_{b^\prime},\lambda_{b^\prime})
\]
where $(A_{b^\prime},\lambda_{b^\prime})=(\mathfrak{A}_{g,b^\prime},A_{b^\prime}\xrightarrow{\sim}A_{b^\prime}^\vee\text{ induced by }\mathfrak{L}_{g,b^\prime})$. Let $\cB^\prime$ be a symplectic basis of $H_1(A_{b^\prime},\Z)$ with respect to the polarization $\lambda_{b^\prime}$ and let $\tilde{b}^\prime\in\H_g^+$ be the period matrix of $A_{b^\prime}$ with respect to the basis $\cB^\prime$.

\begin{defn} The matrix $\alpha\in\GSp_{2g}(\Q)^+\cap\mathrm{M}_{2g\times 2g}(\Z)$ associated to
\[
f_*\colon H_1(A_b,\Z)\rightarrow H_1(A_{b^\prime},\Z)
\]
in terms of $\cB$ and $\cB^\prime$ is called the \textbf{rational representation of $f$} with respect to $\cB$ and $\cB^\prime$.
\end{defn}

The periods $\tilde{b}$ and $\tilde{b}^\prime$ are related by $\alpha$ in the following way:
\[
\tilde{b}=\alpha^t\cdot\tilde{b}^\prime=(A\tilde{b}^\prime+B)(C\tilde{b}^\prime+D)^{-1},\quad\text{ where }\alpha^t=\left(
\begin{array}{cc}
A &B \\
C &D \\
\end{array}\right)\text{ and }\tilde{b},\tilde{b}^\prime\in\H_g^+\subset M_{g\times g}(\C).
\]
Under the $\Q$-basis $\cB$ of $V_{2g}$, the matrix $\alpha^t$ corresponds to the dual isogeny of $f$, i.e. the following diagram commutes:
\begin{equation}\label{RationalRepresentationForfDual}
\begin{diagram}
(\cX^+_{2g,\mathrm{a}})_{\tilde{b}^\prime} &\rTo^{\alpha^t\cdot} &(\cX^+_{2g,\mathrm{a}})_{\tilde{b}} &,\quad &(v,\tilde{b}^\prime)\mapsto\left(\alpha^t v,\alpha^t\tilde{b}^\prime\right)=\left(\alpha^t v,\tilde{b}\right) \\
\dTo^{\unif} & &\dTo^{\unif} \\
A_{b^\prime} & &A_b\\
\dTo^{\lambda_b}_{\wr} & &\dTo^{\lambda_{b^\prime}}_{\wr} \\
A_{b^\prime}^\vee &\rTo^{f^\vee} &A_b^\vee
\end{diagram}.
\end{equation}

However, since $f$ is a polarized isogeny, $f^*\mathfrak{L}_{g,b^\prime}=\mathfrak{L}_{g,b}^{\otimes (\deg f)^{1/g}}$. So the following diagram commutes: 
\begin{equation}\label{CompositeOffAndfDual}
\begin{diagram}
A_b &\rTo^{f} &A_{b^\prime} \\
\dTo^{[(\deg f)^{1/g}]\circ\lambda_b} & &\dTo^{\lambda_{b^\prime}}_{\wr} \\
A_b^\vee &\lTo^{f^\vee} &A_{b^\prime}^\vee
\end{diagram}.
\end{equation}

Therefore by \eqref{RationalRepresentationForfDual} and \eqref{CompositeOffAndfDual}, we get the following commutative diagram:
\begin{equation}\label{RationalRepresentationPolarizedIsogeny}
\begin{diagram}
(\cX^+_{2g,\mathrm{a}})_{\tilde{b}} &\rTo^{(\deg f)^{1/g}(\alpha^t)^{-1}\cdot} &(\cX^+_{2g,\mathrm{a}})_{\tilde{b}^\prime} \\
\dTo^{\unif} & &\dTo^{\unif} \\
A_b &\rTo^f &A_{b^\prime}
\end{diagram}.
\end{equation}

\begin{defn} The matrix $(\deg f)^{1/g}(\alpha^t)^{-1}$ is called the \textbf{matrix expression of $f$} in coordinates $\cB$ with respect to $\cB^\prime$.
\end{defn}

\begin{rmk}
It is good to give the matrix $(\deg f)^{1/g}(\alpha^t)^{-1}$ a name because we will use it several times in the proof of Theorem~\ref{MainTheoremNonTorsion}. The name ``matrix expression" is given by the author. Remark that this definition only works for polarized isogenies because \eqref{CompositeOffAndfDual} fails for general non-polarized isogenies.
\end{rmk}

\subsection{Generalized Hecke orbit}
\begin{lemma}\label{AutomorphismGroupOfTheDatum}
Let $\varphi\in\aut\left((P_{2g,\mathrm{a}},\cX^+_{2g,\mathrm{a}})\right)$. Then there exist $g^\prime\in\GSp_{2g}(\Q)^+$ and $v_0\in V_{2g}(\Q)$ such that the action of $\varphi$ on $\cX^+_{2g,\mathrm{a}}$ is given by
\[
\varphi\left((v,x)\right)=(g^\prime v+v_0,g^\prime x).
\]
\begin{proof} We have $\varphi(V_{2g})=\varphi(\cR_u(P_{2g,\mathrm{a}}))\subset\cR_u(P_{2g,\mathrm{a}})=V_{2g}$. Since every two Levi decompositions of $P_{2g,\mathrm{a}}$ differs by the conjugation of an element $v_0\in V_{2g}(\Q)$, there exists a $v_0\in V_{2g}(\Q)$ such that $\psi:=\Int(v_0)^{-1}\circ\varphi$ maps $(\GSp_{2g},\H^+_g)$ to itself. Now $\psi$ maps $V_{2g}$ and $(\GSp_{2g},\H_g^+)$ to themselves. So $\psi$ can be written as $(A,B)$, where $A\in \GL_{2g}(\Q)$ and $B\in \aut\left((\GSp_{2g},\H^+_g)\right)=\GSp_{2g}(\Q)^+$. Remark that $\psi\in \aut(P_{2g,\mathrm{a}})$, so we can do the following computation: 

For any $v\in V_{2g}(\Q)$ and $h\in\GSp_{2g}(\Q)^+$,
\begin{align*}
(Ahv,BhB^{-1})=\psi((hv,h))=\psi((0,h)(v,1))=\psi(0,h)\psi(v,1) \\
=(0,BhB^{-1})(Av,1)=(BhB^{-1}Av,BhB^{-1}).
\end{align*}
Because $v$ is an arbitrary element of $V_{2g}(\Q)$, this implies that $Ah=BhB^{-1}A$ for any $h\in\GSp_{2g}(\Q)^+$. But this tells us that $A^{-1}B$ commutes with any element of $\GSp_{2g}(\Q)^+$, and hence $A^{-1}B\in\G_m(\Q)$. So $\psi$ acts on the group $P_{2g,\mathrm{a}}$ as $\psi((v,h))=(cBv,BhB^{-1})$ where $c\in\Q^*$ and $B\in\GSp_{2g}(\Q)^+$. Therefore $\psi$ acts on $\cX_{2g,a}^+$ as $\psi((v,x))=(cBv,Bx)=(cBv,cBx)$. Denote by $g^\prime:=cB\in\GSp_{2g}(\Q)^+$, then the action of $\varphi$ on $\cX^+_{2g,\mathrm{a}}$ is given by
\[
\varphi\left((v,x)\right)=(g^\prime v+v_0,g^\prime x).
\]
\end{proof}
\end{lemma}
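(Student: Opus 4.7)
The plan is to reduce the question to understanding automorphisms of the pure datum $(\GSp_{2g},\H_g^+)$ by straightening $\varphi$ with the Levi decomposition, then to carry out a direct calculation with the semi-direct product.

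First, the unipotent radical $V_{2g}=\cR_u(P_{2g,\mathrm{a}})$ is characteristic in $P_{2g,\mathrm{a}}$, so $\varphi(V_{2g})=V_{2g}$. Since any two Levi decompositions of $P_{2g,\mathrm{a}}$ differ by conjugation by a unique element of $V_{2g}(\Q)$ (\cite[Theorem~2.3]{AlgebraicGroupBible}), there exists $v_0\in V_{2g}(\Q)$ such that $\psi:=\Int(v_0)^{-1}\circ\varphi$ preserves the fixed Levi subgroup $\GSp_{2g}\hookrightarrow P_{2g,\mathrm{a}}$, and hence the fixed subdatum $(\GSp_{2g},\H_g^+)\hookrightarrow(P_{2g,\mathrm{a}},\cX^+_{2g,\mathrm{a}})$. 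A short calculation (using that $V_{2g}$ is abelian) shows that $\Int(v_0)$ acts on $\cX^+_{2g,\mathrm{a}}=V_{2g}(\R)\times\H_g^+$ by $(v,x)\mapsto(v+v_0,x)$, so proving the lemma for $\psi$ with vanishing translation term will immediately yield it for $\varphi$ with translation $v_0$.

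Second, $\psi$ restricts to a $\Q$-linear automorphism $A\in\GL(V_{2g}(\Q))$ of the unipotent radical and to an automorphism of $(\GSp_{2g},\H_g^+)$ which, by the standard description of the automorphism group of a pure Shimura datum, is inner conjugation by some $B\in\GSp_{2g}(\Q)^+$. Compatibility of $\psi$ with the semi-direct product law forces, for every $h\in\GSp_{2g}(\Q)^+$ and every $v\in V_{2g}(\Q)$, the identity $Ahv=BhB^{-1}Av$, so $A^{-1}B$ centralizes $\GSp_{2g}(\Q)^+$ inside $\GL(V_{2g})$. Because the standard $2g$-dimensional representation of $\GSp_{2g}$ is irreducible, Schur's lemma implies $A=cB$ for some $c\in\Q^*$. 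Setting $g':=cB$, we have $\nu(cB)=c^2\nu(B)>0$, so $g'\in\GSp_{2g}(\Q)^+$; and since the central scalar $c$ acts trivially on $\H_g^+$ by conjugation, $\psi((v,x))=(g'v,g'x)$, whence the desired formula $\varphi((v,x))=(g'v+v_0,g'x)$.

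The only real obstacle is the Levi decomposition step; once that reduction is made, the remainder is a direct computation resting on the known shape of $\aut(\GSp_{2g},\H_g^+)$ and on Schur's lemma applied to the standard representation.
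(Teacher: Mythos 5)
Your proof is correct and follows the same route as the paper: straighten $\varphi$ by the Levi decomposition to get $\psi=\Int(v_0)^{-1}\circ\varphi$ preserving the pure subdatum, decompose $\psi$ into the pair $(A,B)$, use compatibility with the semidirect product to deduce that $A^{-1}B$ centralizes $\GSp_{2g}(\Q)^+$, and conclude $A$ and $B$ differ by a scalar. You spell out a few steps the paper leaves implicit — invoking absolute irreducibility of the standard representation and Schur's lemma for the centralizer argument, checking that $\nu(cB)=c^2\nu(B)>0$ so $g'=cB$ stays in $\GSp_{2g}(\Q)^+$, and observing that the scalar $c$ acts trivially on $\H_g^+$ — but these are the paper's own implicit justifications made explicit, not a different argument.
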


Let $s\in\mathfrak{A}_g$, then $[\pi]s\in\cA_g$ corresponds to a polarized abelian variety $(\mathfrak{A}_{g,[\pi]s},\lambda_{[\pi]s})$.

\begin{cor}\label{GeneralizedHeckeIsogenyStep1}
Let $s\in\mathfrak{A}_g$. Then a point $t$ is in the generalized Hecke orbit of $s$ iff there exist a polarized isogeny $f\colon(\mathfrak{A}_{g,[\pi]s},\lambda_{[\pi]s})\rightarrow(\mathfrak{A}_{g,[\pi]t},\lambda_{[\pi]t})$ and $n^\prime\in\N$ such that $f(s)=n^\prime t$.
\begin{proof} Let $(v,x)\in\cX^+_{2g,\mathrm{a}}$ (resp. $(v_t,x_t)\in\cX^+_{2g,\mathrm{a}}$) be such that $s=\unif\left((v,x)\right)$ (resp. $t=\unif\left((v_t,x_t)\right)$). Then by Lemma~\ref{AutomorphismGroupOfTheDatum}, $t$ is in the generalized Hecke orbit of $s$ iff
\begin{equation}\label{GeneralizedHeckeCorrespondence}
(v_t,x_t)=(g^\prime v+v_0,g^\prime x)
\end{equation}
for some $g^\prime\in\GSp_{2g}(\Q)^+$ and $v_0\in V_{2g}(\Q)$.

If \eqref{GeneralizedHeckeCorrespondence} is satisfied, then there exists $c\in\G_m(\Q)=\Q^*$ s.t $h:=c^{-1}g^\prime\in\GSp_{2g}(\Q)^+$ is a $\Z$-coefficient matrix. Hence $h$ corresponds to a polarized isogeny $f\colon(\mathfrak{A}_{g,[\pi]s},\lambda_{[\pi]s})\rightarrow(\mathfrak{A}_{g,[\pi]t},\lambda_{[\pi]t})$. By \eqref{GeneralizedHeckeCorrespondence}, we have $t=\unif\left((chv+v_0,x_t)\right)$, and therefore
\[
n^\prime t=m^\prime f(s)+\unif\left((v_0,x_t)\right)
\]
where $c=m^\prime/n^\prime$. But $\unif\left((v_0,x_t)\right)$ is a torsion point of $\mathfrak{A}_{g,[\pi]t}$ since $v_0\in V_{2g}(\Q)$, and therefore can be removed by replacing $m^\prime$ and $n^\prime$ by sufficient large multiples. On the other hand $m^\prime f$ is still a polarized isogeny, and hnce replacing $f$ by $m^\prime f$, we may assume $m^\prime=1$. Finally we may assume $n^\prime\in\N$ by possibly replacing $f$ by $-f$.

On the other hand, suppose there exist a polarized isogeny $f\colon(\mathfrak{A}_{g,[\pi]s},\lambda_{[\pi]s})\rightarrow(\mathfrak{A}_{g,[\pi]t},\lambda_{[\pi]t})$ and $n^\prime\in\N$ such that $f(s)=n^\prime t$.
Let $\cB_s$ (resp. $\cB_t$) be a symplectic basis of $H_1(\mathfrak{A}_{g,[\pi]s},\Z)$ (resp. $H_1(\mathfrak{A}_{g,[\pi]t},\Z)$) and let $h$ be the matrix expression of $f$ in coordiante $\cB_s$ with respect to $\cB_t$. Then $h\in\GSp_{2g}(\Q)^+$ and there exists $(\gamma_V,\gamma_G)\in\Gamma$ such that
\[
(n^\prime v_t,x_t)=(\gamma_V,\gamma_G)(hv,hx)=(\gamma_V+\gamma_Ghv,\gamma_Ghx).
\]
Now $g^\prime:=\gamma_Gh/n^\prime \in\GSp_{2g}(\Q)^+$ and $v_0:=\gamma_V/n^\prime\in V_{2g}(\Q)$ satisfy \eqref{GeneralizedHeckeCorrespondence}.
\end{proof}
\end{cor}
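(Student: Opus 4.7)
The plan is to translate the statement into a purely group-theoretic statement on $\cX^+_{2g,\mathrm{a}}$ via Lemma~\ref{AutomorphismGroupOfTheDatum} and then compare with the matrix expression of a polarized isogeny established in $\mathsection$\ref{SectionRationalRepresentationIsogeny}. Pick lifts $(v,x),(v_t,x_t)\in\cX^+_{2g,\mathrm{a}}$ of $s$ and $t$. By Lemma~\ref{AutomorphismGroupOfTheDatum}, $t$ lies in the generalized Hecke orbit of $s$ if and only if
\[
(v_t,x_t)\equiv(g^\prime v+v_0,g^\prime x)\pmod{\Gamma},\qquad g^\prime\in\GSp_{2g}(\Q)^+,\ v_0\in V_{2g}(\Q),
\]
so the whole statement reduces to comparing such pairs $(g^\prime,v_0)$ with matrix expressions of polarized isogenies.

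For the direction ``$\Rightarrow$'', first clear denominators: write $g^\prime=ch$ with $c=m^\prime/n^\prime\in\Q^*$ and $h\in\GSp_{2g}(\Q)^+\cap\mathrm{M}_{2g\times 2g}(\Z)$. Such an $h$ is, up to a scalar, the matrix expression of a polarized isogeny $f\colon(\mathfrak{A}_{g,[\pi]s},\lambda_{[\pi]s})\rightarrow(\mathfrak{A}_{g,[\pi]t},\lambda_{[\pi]t})$ by the diagram \eqref{RationalRepresentationPolarizedIsogeny}. Plugging back, one reads off $n^\prime t=m^\prime f(s)+\unif(v_0,x_t)$, and $\unif(v_0,x_t)$ is a torsion point because $v_0\in V_{2g}(\Q)$; multiplying $m^\prime$ and $n^\prime$ by a common integer kills this torsion, replacing $f$ by $m^\prime f$ (still a polarized isogeny) reduces to $m^\prime=1$, and finally replacing $f$ by $-f$ if necessary guarantees $n^\prime>0$.

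For the reverse direction, start from $f(s)=n^\prime t$ and let $h\in\GSp_{2g}(\Q)^+$ be the matrix expression of $f$ in suitable symplectic bases. By \eqref{RationalRepresentationPolarizedIsogeny} the action $v\mapsto hv$ on $\cX^+_{2g,\mathrm{a}}$ realises $f$ after $\unif$, so $(hv,hx)$ is a lift of $f(s)=n^\prime t$; multiplying by $\Gamma=\Gamma_V\rtimes\Gamma_G$ gives $(n^\prime v_t,x_t)=(\gamma_V+\gamma_G hv,\gamma_G hx)$ for some $(\gamma_V,\gamma_G)\in\Gamma$. Dividing by $n^\prime$ produces $g^\prime:=\gamma_G h/n^\prime\in\GSp_{2g}(\Q)^+$ and $v_0:=\gamma_V/n^\prime\in V_{2g}(\Q)$ satisfying the Hecke relation above.

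The only genuine point of care is the scalar bookkeeping in the $\Rightarrow$ direction (polarized isogenies correspond to \emph{integral} symplectic similitudes, not merely rational ones, and the residual torsion shift by $\unif(v_0,x_t)$ must be absorbed); everything else is a direct translation between the two descriptions.
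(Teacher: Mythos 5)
Your proof is correct and follows essentially the same route as the paper's: reduce via Lemma~\ref{AutomorphismGroupOfTheDatum} to the affine relation on lifts modulo $\Gamma$, clear denominators to produce an integral symplectic similitude and hence a polarized isogeny in the forward direction, and insert the matrix expression of $f$ in the backward direction. The only cosmetic difference is that you make the $\pmod\Gamma$ ambiguity explicit and phrase the integral $h$ as being ``up to a scalar'' the matrix expression of an isogeny, whereas the paper simply says $h$ corresponds to a polarized isogeny — both come down to the same diagram~\eqref{RationalRepresentationPolarizedIsogeny} and the scalar is absorbed into $c=m^\prime/n^\prime$ anyway.
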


\begin{cor}\label{GeneralizedHeckeIsogeny}
Let $s\in\mathfrak{A}_g$ and $t$ be a point in the generalized Hecke orbit of $s$. Let $f_t\colon(\mathfrak{A}_{g,[\pi]s},\lambda_{[\pi]s})\rightarrow(\mathfrak{A}_{g,[\pi]t},\lambda_{[\pi]t})$ be a polarized isogeny of minimal degree. Then there exist
\begin{itemize}
\item a point $s_0\in \mathfrak{A}_{g,[\pi]s}$;
\item $\varphi\in\End\left((\mathfrak{A}_{g,[\pi]s},\lambda_{[\pi]s})\right)$;
\item $n_0\in\N$
\end{itemize}
such that $s=n_0s_0$ and 
\[
f_t(\varphi(s_0)+p)=t
\]
for some torsion point $p\in\mathfrak{A}_{g,[\pi]s}$.
\begin{proof} By Corollary \ref{GeneralizedHeckeIsogenyStep1}, there exist a polarized isogeny $f\colon(\mathfrak{A}_{g,[\pi]s},\lambda_{[\pi]s})\rightarrow(\mathfrak{A}_{g,[\pi]t},\lambda_{[\pi]t})$ and $m^\prime,n^\prime\in\N$ such that $p_1:=m^\prime f(s)-n^\prime t$ is a torsion point of $\mathfrak{A}_{g,[\pi]t}$.
Now $f_t^{-1}\circ f\in\End\left((\mathfrak{A}_{g,[\pi]s},\lambda_{[\pi]s})\right)\otimes\Q$, i.e. there exist $\varphi^\prime\in\End\left((\mathfrak{A}_{g,[\pi]s},\lambda_{[\pi]s})\right)$ and $n_0^\prime\in\N$ such that $f_t^{-1}\circ f=\varphi^\prime\otimes(1/n_0)$. So $n_0^\prime\circ f=f_t\circ\varphi^\prime$ and hence
\[
m^\prime f_t(\varphi^\prime(s))=m^\prime n_0^\prime f(s)=n_0^\prime(n^\prime t+p_1)=n_0^\prime n^\prime t+n_0p_1.
\]
Let $\varphi:=m^\prime\circ\varphi^\prime\in\End\left((\mathfrak{A}_{g,[\pi]s},\lambda_{[\pi]s})\right)$ and $n_0:=n_0^\prime n^\prime\in\N$, then there exists a torsion point $p_2\in\mathfrak{A}_{g,[\pi]t}$ such that
\[
f_t(\varphi(s))=n_0t+p_2.
\]
Hence the conclusion follows.
\end{proof}
\end{cor}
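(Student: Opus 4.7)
My plan is to invoke Corollary~\ref{GeneralizedHeckeIsogenyStep1} to produce \emph{some} polarized isogeny $f$ (a priori different from $f_t$) together with integers $m',n'\in\N$ for which $m'f(s)-n't$ is a torsion point of $\mathfrak{A}_{g,[\pi]t}$, and then to compare $f$ with the minimal polarized isogeny $f_t$ by passing to $\End\otimes\Q$. The key mechanism is that any isogeny becomes invertible in the endomorphism algebra over $\Q$, so $f_t^{-1}\circ f$ will sit in $\End((\mathfrak{A}_{g,[\pi]s},\lambda_{[\pi]s}))\otimes\Q$, and clearing denominators will produce the desired $\varphi$.

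More precisely, Corollary~\ref{GeneralizedHeckeIsogenyStep1} gives a polarized isogeny $f\colon(\mathfrak{A}_{g,[\pi]s},\lambda_{[\pi]s})\to(\mathfrak{A}_{g,[\pi]t},\lambda_{[\pi]t})$ and $m',n'\in\N$ with $p_1:=m'f(s)-n't$ torsion. Write $f_t^{-1}\circ f=\varphi'/n_0'$ with $\varphi'\in\End((\mathfrak{A}_{g,[\pi]s},\lambda_{[\pi]s}))$ and $n_0'\in\N$, so that $n_0' f=f_t\circ\varphi'$. Combining with the first equation,
\[
m'f_t(\varphi'(s))=m'n_0'f(s)=n_0'n't+n_0'p_1,
\]
hence setting $\varphi:=m'\varphi'$ and $n_0:=n_0'n'$ gives $f_t(\varphi(s))=n_0 t+p_2$ with $p_2:=n_0'p_1$ a torsion point of $\mathfrak{A}_{g,[\pi]t}$.

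To match the exact form of the statement I would then exploit divisibility of the complex abelian variety $\mathfrak{A}_{g,[\pi]s}$: choose any $s_0$ with $n_0 s_0=s$, so $n_0\varphi(s_0)=\varphi(s)$ and
\[
n_0\bigl(f_t(\varphi(s_0))-t\bigr)=p_2.
\]
Thus $q:=f_t(\varphi(s_0))-t$ is a torsion point of $\mathfrak{A}_{g,[\pi]t}$; using surjectivity of the isogeny $f_t$, I would lift $-q$ to some $p\in\mathfrak{A}_{g,[\pi]s}$ with $f_t(p)=-q$, which is automatically torsion because $\ker f_t$ is finite. This produces $f_t(\varphi(s_0)+p)=t$ as required.

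I do not anticipate a genuine obstacle. The only subtle point is checking that $\varphi'$ can be taken in $\End((\mathfrak{A}_{g,[\pi]s},\lambda_{[\pi]s}))$ rather than merely $\End(\mathfrak{A}_{g,[\pi]s})\otimes\Q$; this is automatic because $f$ and $f_t$ are both polarized isogenies, so their ratio respects the polarization data up to a rational factor which is absorbed in $n_0'$. Note also that although the statement features the minimal-degree $f_t$, the argument itself only uses that $f_t$ is \emph{some} polarized isogeny — the minimality plays no substantive role in the construction, only in the way one parametrizes the generalized Hecke orbit.
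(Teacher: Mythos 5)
Your proof follows exactly the same route as the paper's: invoke Corollary~\ref{GeneralizedHeckeIsogenyStep1} for a polarized isogeny $f$ and integers $m',n'$ with $m'f(s)-n't$ torsion, compare $f$ with $f_t$ via $f_t^{-1}\circ f\in\End\otimes\Q$, clear denominators to get $n_0'f=f_t\circ\varphi'$, and then set $\varphi=m'\varphi'$, $n_0=n_0'n'$ to arrive at $f_t(\varphi(s))=n_0t+p_2$. The only difference is that you spell out the final step (the paper just says ``Hence the conclusion follows''): choosing $s_0$ with $n_0s_0=s$ by divisibility, observing that $n_0\bigl(f_t(\varphi(s_0))-t\bigr)=p_2$ forces $q:=f_t(\varphi(s_0))-t$ to be torsion, and lifting $-q$ through the finite-kernel surjection $f_t$ to a torsion point $p$ with $f_t(\varphi(s_0)+p)=t$. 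This is a correct and welcome elaboration, not a different argument. You also avoid the two minor typos in the paper's proof (writing $n_0'$ rather than $n_0$ in the expression $\varphi'\otimes(1/n_0')$ and in $n_0'p_1$). Your parenthetical about why $\varphi'$ lands in $\End((\mathfrak{A}_{g,[\pi]s},\lambda_{[\pi]s}))$ rather than merely $\End(\mathfrak{A}_{g,[\pi]s})\otimes\Q$ is slightly hand-wavy — the precise point is that both $f$ and $f_t$ pull back the polarization to a rational multiple of $\lambda_{[\pi]s}$, so $f_t^{-1}\circ f$ does too, and the scalar is absorbed when clearing denominators — but this matches the level of detail the paper itself supplies.
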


\section{Diophantine estimate for the torsion case}\label{SectionDiophantineEstimateForTheTorsionCase}

\subsection{Preliminary}
In this subsection, we fix some definitions and notation used inminimal degree the proof of Theorem~\ref{MainTheoremZannier}.

Let $a\in\cA_g$. We use $\Sigma$ instead of $\Sigma_a$ to denote the set of all $a$-strongly special points of $\mathfrak{A}_g$. Let $\unif\colon \cX^+_{2g,\mathrm{a}}\rightarrow\mathfrak{A}_g$ be the uniformization map and let $\cF$ be the fundamental set in $\cX^+_{2g,\mathrm{a}}$ defined as in Theorem~\ref{UniversalFamilyTheorem}.(3). Let
\[
\tilde{Y}:=\unif^{-1}(Y)\cap\cF \text{ and } \tilde{\Sigma}:=\unif^{-1}(\Sigma)\cap\cF.
\]
The point $a\in\cA_g$ corresponds to the polarized abelian variety $(A_a,\lambda_a):=(\mathfrak{A}_{g,a},\lambda_a)$. Let $\cB$ be a symplectic basis for $H_1(A_a,\Z)$ with respect to the polarization $\lambda_a$. Let $\tilde{a}$ be the period matrix of $A_a$ with respect to the chosen basis $\cB$. In the rest of the paper, we shall sometimes identify $\tilde{a}\in\H_g^+$ and $(0,\tilde{a})\in\{0\}\times\H_g^+\subset V_{2g}(\R)\times\H_g^+\cong\cX^+_{2g,\mathrm{a}}$.

For any $t\in\Sigma$, there exists by definition of $\Sigma_a$ a polarized isogeny $(A_a,\lambda_a)\rightarrow(\mathfrak{A}_{g,[\pi]t},\lambda_{[\pi]t})$. Besides, $t$ is a torsion point of $A_{[\pi]t}:=\mathfrak{A}_{g,[\pi]t}$, whose order we denote by $N(t)$.

\begin{defn} For any $t\in\Sigma$, define its \textbf{complexity} to be
\[
\max\left(\text{minimal degree of polarized isogenies }(A_a,\lambda_a)\rightarrow(A_{[\pi]t},\lambda_{[\pi]t}),N(t)\right).
\]
In addition, define the \textbf{complexity} of any point of $\tilde{\Sigma}$ to be the complexity of its image in $\Sigma$.
\end{defn}

\subsection{Application of Pila-Wilkie}
The goal of this subsection is to prove the following proposition:
\begin{prop}\label{PilaWilkieForComplexity}
Let $Y$, $\tilde{a}$ be as in the last subsection. Let $\epsilon>0$. There exists a constant $c=c(Y,\tilde{a},\epsilon)>0$ with the following property:

For every $n\geqslant 1$, there exist at most $cn^{\epsilon}$ definable blocks $B_i\subset\tilde{Y}$ such that $\cup B_i$ contains all points of complexity at most $n$ in $\tilde{Y}\cap\tilde{\Sigma}$. 
\end{prop}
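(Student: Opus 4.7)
The plan is to apply the Pila--Wilkie counting theorem (in its definable-block form) to an auxiliary definable set parametrising $\tilde Y$, then push the resulting blocks down to $\tilde Y$. Two ingredients are required: definability of the relevant sets in $\R_{an,\exp}$, and a polynomial-in-$n$ height bound for a rational point encoding each element of $\tilde Y\cap\tilde\Sigma$ of complexity at most $n$.

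For definability: by Theorem~\ref{UniversalFamilyTheorem}(3), $\unif|_{\cF}$ is definable in $\R_{an,\exp}$; since $Y$ is algebraic, $\tilde Y=\unif|_{\cF}^{-1}(Y)$ is definable. I would then introduce the definable set
\[
D:=\bigl\{(\beta,v)\in\GSp_{2g}(\R)^+\times V_{2g}(\R)\;:\;\beta\tilde a\in\cF_G\text{ and }(v,\beta\tilde a)\in\tilde Y\bigr\},
\]
with the definable projection $p\colon D\to\tilde Y$, $(\beta,v)\mapsto(v,\beta\tilde a)$, whose image covers exactly the part of $\tilde Y$ lying above $\GSp_{2g}(\Q)^+\cdot[\tilde a]$ in $\cA_g$.

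The key claim would be that there exists $\kappa=\kappa(g)$ such that every $\tilde t\in\tilde Y\cap\tilde\Sigma$ of complexity $\leq n$ admits a preimage $(\beta,v)\in D\cap(\Q^{4g^2}\times\Q^{2g})$ of height $\leq n^{\kappa}$. To produce it, let $t=\unif(\tilde t)$ and fix a polarised isogeny $f\colon(A_a,\lambda_a)\to(A_{[\pi]t},\lambda_{[\pi]t})$ of minimal degree $d\leq n$. For any symplectic basis $\cB_t$ of $H_1(A_{[\pi]t},\Z)$, $\mathsection$\ref{SectionRationalRepresentationIsogeny} produces a rational representation $\alpha_0\in\GSp_{2g}(\Q)^+\cap M_{2g\times 2g}(\Z)$ and identifies the period matrix of $A_{[\pi]t}$ in $\cB_t$ with $(\alpha_0^t)^{-1}\tilde a$; changing $\cB_t$ replaces $\alpha_0$ by $\gamma\alpha_0$ for $\gamma\in\Gamma_G(N)$. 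Invoking Orr's reduction-theoretic analysis of representatives of polarised-isogeny orbits meeting the Siegel fundamental domain (\cite{OrrFamilies-of-abe,MartinThesis}), one may choose $\cB_t$ so that $\beta:=(\alpha_0^t)^{-1}$ satisfies $\beta\tilde a\in\cF_G$ and has matrix entries bounded by a polynomial in $d$. Since $\alpha_0$ is integral of determinant at most $n$, $\beta$ is rational of denominator $\leq n$, hence of height $\leq n^{\kappa_0}$ for some $\kappa_0(g)$. Simultaneously, since $t$ is a torsion point of order $N(t)\leq n$, its fibre coordinate lifts uniquely to some $v\in\tfrac1n V_{2g}(\Z)\cap[0,N)^{2g}$, of height $\leq n$ for $n$ large. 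Thus $(\beta,v)\in D$ is a rational point of height $\leq n^{\kappa}$, with $p(\beta,v)=\tilde t$.

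Finally, applying Pila--Wilkie to $D$ with exponent $\epsilon/\kappa$: there is $c_0=c_0(D,\epsilon)$ so that the rational points of height $\leq n^\kappa$ in $D$ lie in at most $c_0n^\epsilon$ definable blocks $B'_i\subset D$. Their images $B_i:=p(B'_i)$ are definable and contained in $\tilde Y$, and after decomposing each $B_i$ into finitely many blocks in $\tilde Y$ (which changes only the implicit constant), we obtain the desired cover of all complexity-$\leq n$ points of $\tilde Y\cap\tilde\Sigma$.

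The main obstacle is the polynomial height bound on $\beta$: given a polarised isogeny of degree $\leq n$, one must select a symplectic basis of the target so that the corresponding period lies in $\cF_G$ \emph{and} the resulting matrix has entries of size $n^{O(1)}$. This is precisely the reduction-theoretic content of Orr's work in the pure-part case $\cA_g$. Since the isogeny data here lives on the pure base, I would hope to import it as a black box; verifying the transfer carefully (and tracking the contribution of the $V_{2g}$-factor) is where the real work lies.
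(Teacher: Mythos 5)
Your proposal follows essentially the same route as the paper: apply the block-family version of Pila--Wilkie to the definable set $\sigma^{-1}(\tilde Y)$ where $\sigma(v,h)=(v,h)\tilde a$ (your $D$ is the same set up to a relabeling of coordinates, and the constraint $\beta\tilde a\in\cF_G$ is already forced by $(v,\beta\tilde a)\in\tilde Y\subset\cF$), push the blocks forward by the semi-algebraic map $\sigma$, and reduce the problem to a polynomial height bound for a rational preimage of each $\tilde t\in\tilde Y\cap\tilde\Sigma$. The ``main obstacle'' you flag at the end is precisely the content of the paper's Lemma~\ref{TranslateWRTComplexity}, which is proved by the two-step argument you gesture at: Orr's Proposition~4.1 (\cite{OrrFamilies-of-abe}) provides a symplectic basis in which the rational representation of a minimal polarized isogeny has height polynomial in the degree, and then one multiplies by an element $h_2\in\Gamma_G$ to land in $\cF_G$, with $H(h_2)$ controlled via \cite[Lemma~3.2]{PilaAbelianSurfaces}; the $V_{2g}$-factor is handled just as you describe, since $\cF\cong[0,N)^{2g}\times\cF_G$ bounds the fibre coordinate in terms of the torsion order.
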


\begin{lemma}\label{TranslateWRTComplexity}
There exist constants $c^\prime$, $\kappa$ depending only on $g$ and $\tilde{a}$ such that

For any $\tilde{t}\in\tilde{Y}\cap\tilde{\Sigma}$ of complexity $n$, there exists a $(v,h)\in P_{2g}(\Q)^+$ such that $(v,h)\tilde{a}=\tilde{t}$ and $H((v,h))\leqslant c^\prime n^{\kappa}$.
\begin{proof} Let $t=\unif(\tilde{t})$.  By \cite[Proposition 4.1]{OrrFamilies-of-abe}, there exist
\begin{itemize}
\item a polarized isogeny $f\colon\mathfrak{A}_{g,[\pi]t}\rightarrow A_a$;
\item a symplectic basis $\cB^\prime$ for $H_1(\mathfrak{A}_{g,[\pi]t},\Z)$ with respect to the polarization $\lambda_{[\pi]t}$
\end{itemize}
such that the rational representation $h_1$ of $f$ with respect to the chosen bases satisfies that $H(h_1)$ is polynomially bounded in $\deg(f)$.

But $\unif_G(h_1^t\tilde{a})=[\pi]t$ by \eqref{RationalRepresentationPolarizedIsogeny}.
Hence there exists a $h_2\in\Gamma_G$ such that $h_2h_1^t\tilde{a}=\pi(\tilde{t})\in\cF_G$. By \cite[Lemma 3.2]{PilaAbelianSurfaces}, $H(h_2)$ is polynomially bounded in the norm of $h_1^t\cdot\tilde{a}$. 

Now define $h:=h_2h_1^t$. We have then $h\tilde{a}=\pi(\tilde{t})$ and
\[
H(h)\leqslant c_0\deg(f)^{\kappa_0}
\]
where $c_0>0$ and $\kappa_0>0$ depend only on $g$ and $\tilde{a}$.

Next write $\tilde{t}=(\tilde{t}_V,\pi(\tilde{t}))\in\cF$. Let $v:=\tilde{t}_V$, then $v\in V_{2g}(\Q)$ since $t$ is a torsion point of $\mathfrak{A}_{g,[\pi]t}$. Besides, the denominator of $v$ is precisely the order of the torsion point $t$. But by choice, $\cF\cong [0,N)^{2g}\times\cF_G\subset V_{2g}(\R)\times\H_g^+\cong\cX^+_{2g,\mathrm{a}}$ (see Theorem~\ref{UniversalFamilyTheorem}.(3)). Therefore up to a constant depending on nothing, $H(v)$ is bounded by its denominator, i.e. the order of the torsion point $t$ of $\mathfrak{A}_{g,[\pi]t}$.

To sum it up, $(v,h)$ is the element of $P_{2g}(\Q)^+$ which we dezire.
\end{proof} 
\end{lemma}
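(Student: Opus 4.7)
The plan is to construct $h\in\GSp_{2g}(\Q)^+$ and $v\in V_{2g}(\Q)$ separately, each of polynomially bounded height in $n$, so that $(v,h)\tilde{a}=\tilde{t}$ under the identification $V_{2g}(\R)\times\H_g^+\cong\cX^+_{2g,\mathrm{a}}$ (recall $\tilde{a}$ is identified with $(0,\tilde{a})$). Writing $\tilde{t}=(\tilde{t}_V,\pi(\tilde{t}))\in\cF$, the action formula $(v,h)\cdot(0,\tilde{a})=(v,h\tilde{a})$ from \eqref{UniformizingSpace} reduces the problem to finding $h$ with $h\tilde{a}=\pi(\tilde{t})$ and setting $v:=\tilde{t}_V$.

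For the $\GSp_{2g}$-component, I would invoke the definition of complexity: there is a polarized isogeny $f\colon(A_a,\lambda_a)\to(A_{[\pi]t},\lambda_{[\pi]t})$ with $\deg f\leqslant n$. The key input is then the height bound for the rational representation of a polarized isogeny in a well-chosen symplectic basis, available from Orr's work (\cite[Proposition~4.1]{OrrFamilies-of-abe} / \cite{MartinThesis}): one can choose a symplectic basis $\cB^\prime$ of $H_1(A_{[\pi]t},\Z)$ so that the rational representation $h_1$ of $f$ satisfies $H(h_1)\ll n^{\kappa_1}$ with $\kappa_1$ depending only on $g$. By the transformation law in diagram \eqref{RationalRepresentationForfDual}, $h_1^t\tilde{a}$ is a period matrix for $A_{[\pi]t}$, so lies in the same $\Gamma_G(N)$-orbit as $\pi(\tilde{t})\in\cF_G$. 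Reduction theory for the Siegel fundamental set (\cite[Lemma~3.2]{PilaAbelianSurfaces}) then supplies $\gamma\in\Gamma_G(N)$ with $\gamma h_1^t\tilde{a}=\pi(\tilde{t})$ and $H(\gamma)$ polynomially bounded in the norm of $h_1^t\tilde{a}$, hence in $H(h_1)$ since $\tilde{a}$ is fixed. Setting $h:=\gamma h_1^t\in\GSp_{2g}(\Q)^+$ yields $h\tilde{a}=\pi(\tilde{t})$ and $H(h)\ll n^{\kappa_2}$.

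For the $V_{2g}$-component, I would simply set $v:=\tilde{t}_V$ and bound its height directly. Membership in the fundamental set $\cF\cong[0,N)^{2g}\times\cF_G$ of Theorem~\ref{UniversalFamilyTheorem}.(3) gives an \emph{absolute} bound on the archimedean size of $\tilde{t}_V$. On the other hand, since $t=\unif(\tilde{t})$ is a torsion point of $A_{[\pi]t}$ of order $N(t)\leqslant n$, and torsion points lift under $\unif$ to rational vectors with denominator equal to (a fixed multiple of) that order, the denominator of $v$ divides a constant times $n$. Combining these gives $H(v)\leqslant C_3 n$.

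Pairing $v$ and $h$ together then yields $(v,h)\in P_{2g}(\Q)^+$ with $(v,h)\tilde{a}=\tilde{t}$ and $H((v,h))\leqslant c^\prime n^\kappa$, as required. The only genuinely delicate ingredient is the polynomial height bound on the rational representation of a polarized isogeny of bounded degree in a suitable symplectic basis; I would not attempt to reprove this here but import it as a black box from Orr. Everything else (the reduction step, and the torsion-denominator bound for $v$) is essentially formal once the fundamental set has been chosen as in Theorem~\ref{UniversalFamilyTheorem}.(3).
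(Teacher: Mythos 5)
Your proposal follows essentially the same route as the paper's proof: split $(v,h)$ into its $V_{2g}$- and $\GSp_{2g}$-components, import Orr's \cite[Proposition~4.1]{OrrFamilies-of-abe} as a black box for a polarized isogeny whose rational representation has height polynomially bounded in the (minimal) isogeny degree, use \cite[Lemma~3.2]{PilaAbelianSurfaces} to reduce into the Siegel fundamental set, and bound $H(\tilde{t}_V)$ by the torsion order via the shape of $\cF\cong[0,N)^{2g}\times\cF_G$. The only slip is one of orientation: with your choice $f\colon A_a\rightarrow A_{[\pi]t}$, the relation $\tilde{b}=\alpha^t\tilde{b}^\prime$ of $\mathsection$\ref{SectionRationalRepresentationIsogeny} shows that $(h_1^t)^{-1}\tilde{a}$ (equivalently, the matrix expression of $f$ applied to $\tilde{a}$ as in \eqref{RationalRepresentationPolarizedIsogeny}), and not $h_1^t\tilde{a}$, is a period matrix of $A_{[\pi]t}$ --- a harmless fix since inverses in $\GSp_{2g}(\Q)^+$ have height polynomially controlled by $H(h_1)$ and $\deg f$, and one the paper avoids by taking the isogeny in the opposite direction $\mathfrak{A}_{g,[\pi]t}\rightarrow A_a$.
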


Now we can prove Proposition~\ref{PilaWilkieForComplexity} with the help of Lemma~\ref{TranslateWRTComplexity}.
\begin{proof}[Proof of Proposition~\ref{PilaWilkieForComplexity}]
Let
\begin{align*}
\sigma\colon P_{2g}(\R)^+ &\rightarrow\cX^+_{2g,\mathrm{a}} \\
(v,h) &\mapsto (v,h)\tilde{a}
\end{align*}

The set $R:=\sigma^{-1}(\tilde{Y})=\sigma^{-1}(\unif^{-1}(Y)\cap\cF)$ is definable because $\sigma$ is semi-algebraic and $\unif|_{\cF}$ is definable. Hence we can apply the family version of the Pila-Wilkie theorem (\cite[3.6]{PilaO-minimality-an}) to the definable set $R$: for every $\epsilon>0$, there are only finitely many definable block families $B^{(j)}(\epsilon)\subset R\times\R^m$ and a constant $C_1(R,\epsilon)$ such that for every $T\geqslant 1$, the rational points of $R$ of height at most $T$ are contained in the union of at most $C_1T^\epsilon$ definable blocks $B_i(T,\epsilon)$, taken (as fibers) from the families $B^{(j)}(\epsilon)$. Since $\sigma$ is semi-algebraic, the image under $\sigma$ of a definable block in $R$ is a finite union of definable blocks in $\tilde{Y}$. Furthermore the number of blocks in the image is uniformly bounded in each definable block family $B^{(j)}(\epsilon)$. Hence $\sigma(B_i(T,\epsilon))$ is the union of at most $C_2T^{\epsilon}$ blocks in $\tilde{Y}$, for some new constant $C_2(Y,\tilde{a},\epsilon)>0$.

By Lemma~\ref{TranslateWRTComplexity}, for any point $\tilde{t}\in\tilde{Y}\cap\tilde{\Sigma}$ of complexity $n$, there exists a rational element $\gamma\in R$ such that $\sigma(\gamma)=\tilde{t}$ and $H(\gamma)\leqslant c^\prime n^\kappa$. By the discussion in the last paragraph, all such $\gamma$'s are contained in the union of at most $C_1(c^\prime n^\kappa)^{\epsilon}$ definable blocks. Therefore all points of $\tilde{Y}\cap\tilde{\Sigma}$ of complexity $n$ are contained in the union of at most $C_1C_2c^{\prime\epsilon}n^{\kappa\epsilon}$ blocks in $\tilde{Y}$.
\end{proof}

\subsection{Galois orbit}
In this section we shall deal with the Galois orbit. We handle the case of $\bar{\Q}$-points at first and then use the standard specialization argument to prove the result for general points of $\Sigma\cap Y$.

\begin{prop}\label{GaloisOrbitNumberField}
Suppose $a\in\cA_g(\bar{\Q})$. There exist positive constants $c_1^\prime=c_1^\prime(g)$, $c_2^\prime=c_2^\prime(g,k(a))$ and $c_3^\prime=c_3^\prime(g)$ satisfying the following property:

For any point $t\in\Sigma\cap Y\cap\mathfrak{A}_g(\bar{\Q})$ of complexity $n$,
\[
[k(t):\Q]\geqslant c_1^\prime\frac{n^{c_2^\prime}}{h_F(A_a)^{c_3^\prime}}
\]
where $k(t)$ is the definition field of $t$.
\begin{proof} Define (as Gaudron-R\'{e}mond \cite{GaudronPolarisations-e})
\[
\kappa(\mathfrak{A}_{g,[\pi]t}):=((14g)^{64g^2}[k([\pi]t):\Q]\max(h_F(\mathfrak{A}_{g,[\pi]t}),\log[k([\pi]t):\Q],1)^2)^{1024g^3}.
\]
Take a point $t\in\Sigma\cap Y\cap\mathfrak{A}_g(\bar{\Q})$ of complexity $n$. Denote by $k([\pi]t)$) the definition field of $[\pi]t$. Denote by $N(t)$ the order of $t$ as a torsion point of $A_{[\pi]t}:=\mathfrak{A}_{g,[\pi]t}$. There are two cases.

$\framebox{\textit{Case i}}$ $n=\text{minimal degree of polarized isogenies }(A_a,\lambda_a)\rightarrow(A_{[\pi]t},\lambda_{[\pi]t})$. Then by \cite[Th\'eor\`eme 1.4]{GaudronPolarisations-e} and \cite[Theorem 5.6]{MartinThesis},
\[
n\leqslant\kappa(\mathfrak{A}_{g,[\pi]t}).
\]
On the other hand, by a result of Faltings \cite[Chapter II, $\mathsection$4, Lemma 5]{CornellArithmetic-Geom},
\[
h_F(\mathfrak{A}_{g,[\pi]t})\leqslant h_F(A_a)+(1/2)\log n.
\]
Now the conclusion for this case follows from the two inequalities above and the easy fact $[k(t):\Q]\geqslant[k([\pi]t):\Q]$.

$\framebox{\textit{Case ii}}$ $n=N(t)$. By \cite[Th\'{e}or\`{e}me 1.2]{GaudronPolarisations-e}, there exist positive natural numbers $l$, simple abelian varieties $A_1$,...,$A_l$ over a finite extension $k^\prime$ of $k([\pi]t)$ ($A_i$ and $A_j$ can be isogenous to each other over $\bar{\Q}$ for $i\neq j$) and an isogeny
\begin{equation}\label{IsogenyOfGaudronRemond}
\varphi\colon \mathfrak{A}_{g,[\pi]t}\rightarrow\prod_{i=1}^lA_i
\end{equation}
such that $\varphi$ is defined over $k^\prime$, $\deg\varphi\leqslant\kappa(\mathfrak{A}_{g,[\pi]t})$ and $[k^\prime:k([\pi]t)]\leqslant\kappa(\mathfrak{A}_{g,[\pi]t})^g$. Call $p_i\colon A\rightarrow A_i$ the composite of $\varphi$ and the $i$-th projection $\prod_{i=1}^lA_i\rightarrow A_i$ ($\forall i=1,...,l$).

Now $t\in A$ is a torsion point of order $\mathfrak{A}_{g,[\pi]t}$. Without any loss of generality we have
\[
N(p_1(t))\geqslant N(p_i(t))
\]
where $N(p_i(t))$ is the order of $p_i(t)$ as a torsion point of $A_i$.

\begin{lemma}\label{CompareTorsionOrderWithSimpleQuotients}
\[
N(t)\leqslant \kappa(\mathfrak{A}_{g,[\pi]t})N(p_1(t))^g\text{ and }[k(t):\Q]\geqslant[k(p_1(t)):\Q]/\kappa(\mathfrak{A}_{g,[\pi]t})^{2g}.
\]
where $k(p_1(t))$ is the definition field of $p_1(t)$.
\begin{proof} Denote by $N(\varphi(t))$ the order of $\varphi(t)$ as a torsion point of $\prod_{i=1}^lA_i$. It is clear that
\[
N(\varphi(t))\geqslant N(t)/\deg\varphi\geqslant N(t)/\kappa(\mathfrak{A}_{g,[\pi]t}).
\]
On the other hand, $N(\varphi(t))=\text{lcd}(N(p_1(t)),...,N(p_l(t)))\leqslant N(p_1(t))^g$. Now the first inequality follows.

For the second inequality, first of all since $\varphi$ and $\prod_{i=1}^lA_i$ are both defined over $k^\prime$, we have
\[
[k(\varphi(t)):\Q]\leqslant[k(t)k^\prime:\Q]=[k(t):\Q][k(t)k^\prime:k(t)]\leqslant[k(t):\Q][k^\prime:k]\leqslant[k(t):\Q]\kappa(\mathfrak{A}_{g,[\pi]t})^g.
\]
Next since all abelian varieties $A_1$,...,$A_l$ are defined over $k^\prime$, we have then
\[
[k(\varphi(t))k^\prime:\Q]\geqslant[k(p_1(t)):\Q].
\]
But
\begin{align*}
[k(\varphi(t))k^\prime:\Q] &=[k(\varphi(t))k^\prime:k^\prime][k^\prime:k][k:\Q] \\
& \leqslant[k(\varphi(t)):k][k^\prime:k][k:\Q] \\
& =[k(\varphi(t)):\Q][k^\prime:k] \\
& \leqslant[k(\varphi(t)):\Q]\kappa(\mathfrak{A}_{g,[\pi]t})^g.
\end{align*}
Now the second inequality follows from the three inequalities above.
\end{proof}
\end{lemma}

By \cite[Corollaire 1.5]{DavidMinorations-de-},
\begin{equation}\label{SinnouDavidOriginalForTorsionPoint}
[k(p_1(t)):\Q]\geqslant c_0^\prime(g)\frac{N(p_1(t))^{1/(2g)}}{\log N(p_1(t))(h_F(A_1)+\log N(p_1(t)))}.
\end{equation}
By the comment below \cite[Corollaire 1.5]{GaudronPolarisations-e}, we may assume
\begin{equation}\label{GoodChoiceOfSimpleAbelianVarieties}
h_F(A_1)\leqslant h_F(\mathfrak{A}_{g,[\pi]t})+\frac{1}{2}\log\kappa(\mathfrak{A}_{g,[\pi]t}).
\end{equation}

By assumption of this case, there exists an isogeny $A_a\rightarrow\mathfrak{A}_{g,[\pi]t}$ of degree $\leqslant n$. So by Faltings \cite[Chapter II, $\mathsection$4, Lemma 5]{CornellArithmetic-Geom},
\begin{equation}\label{ComparisonOfFaltingsHeightUnderIsogeny}
h_F(\mathfrak{A}_{g,[\pi]t})\leqslant h_F(A_a)+(1/2)\log n.
\end{equation}

Now because $[k(t):\Q]\geqslant[k([\pi]t):\Q]$, the conclusion of \textit{Case ii} now follows from Lemma~\ref{CompareTorsionOrderWithSimpleQuotients}, \eqref{SinnouDavidOriginalForTorsionPoint}, \eqref{GoodChoiceOfSimpleAbelianVarieties} and \eqref{ComparisonOfFaltingsHeightUnderIsogeny}.
\end{proof}
\end{prop}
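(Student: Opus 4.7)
The plan is to split the argument according to which of the two terms realizes the maximum in the definition of complexity, since the known Diophantine input is rather different in each case.

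\textbf{Case i: $n$ is the minimal polarized isogeny degree.} Here I would invoke the Gaudron--R\'emond isogeny estimate, which bounds the minimal degree of a polarized isogeny from $(A_a,\lambda_a)$ to $(A_{[\pi]t},\lambda_{[\pi]t})$ by a polynomial quantity $\kappa(A_{[\pi]t})$ in $[k([\pi]t):\Q]$ and the Faltings height $h_F(A_{[\pi]t})$. Combined with Faltings' inequality $h_F(A_{[\pi]t}) \leqslant h_F(A_a) + \tfrac{1}{2}\log n$ (valid because an isogeny of degree at most $n$ joins $A_a$ and $A_{[\pi]t}$), this inverts to give a lower bound on $[k([\pi]t):\Q]$, hence on $[k(t):\Q]$, of the desired shape $n^{c_2'}/h_F(A_a)^{c_3'}$.

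\textbf{Case ii: $n = N(t)$ is the torsion order.} The new Diophantine input is Sinnou David's lower bound for the degree of the field of definition of a torsion point, but this is stated for simple abelian varieties. Therefore I would first apply Gaudron--R\'emond again to produce, over a controlled finite extension $k'/k([\pi]t)$, an isogeny $\varphi\colon A_{[\pi]t} \to \prod_{i=1}^l A_i$ to a product of simple factors, with both $\deg\varphi$ and $[k':k([\pi]t)]$ bounded polynomially by $\kappa(A_{[\pi]t})$. Composing with the projections $p_i$, I would choose the index $i=1$ maximizing $N(p_i(t))$. One then needs a short auxiliary lemma comparing $N(t)$ with $N(p_1(t))^g$ (via $\deg\varphi$ and the least common multiple identity) and comparing $[k(t):\Q]$ with $[k(p_1(t)):\Q]$ (by descending $p_1(t)$ through $k'$). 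Feeding the chosen $A_1$ into David's bound yields a lower bound for $[k(p_1(t)):\Q]$ in terms of $N(p_1(t))$ and $h_F(A_1)$. Finally, controlling $h_F(A_1)$ by $h_F(A_{[\pi]t}) + \tfrac{1}{2}\log\kappa(A_{[\pi]t})$ (possible by the freedom in choosing the simple factors in the Gaudron--R\'emond decomposition), and then $h_F(A_{[\pi]t})$ by $h_F(A_a) + \tfrac{1}{2}\log n$ via Faltings as in Case i, assembles the desired inequality.

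\textbf{Main obstacle.} The serious work is Case ii: one must keep all three control parameters (degree of the isogeny $\varphi$, degree of the splitting field $k'$, and Faltings height shift) simultaneously polynomial in $\kappa(A_{[\pi]t})$, and then show that the logarithmic factors appearing in David's estimate (of the form $\log N(p_1(t))(h_F(A_1)+\log N(p_1(t)))$) can be absorbed into an adjustment of the exponents $c_2'$ and $c_3'$. The compatibility of Gaudron--R\'emond's decomposition with control of $h_F(A_1)$ is precisely the step where one uses the remark following \cite[Corollaire~1.5]{GaudronPolarisations-e}; without it the height estimate degrades and the final bound loses its polynomial shape. Once these bookkeeping details are in place, the two cases combine to give the uniform lower bound claimed in the proposition.
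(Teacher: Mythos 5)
Your proposal follows essentially the same two-case strategy as the paper's proof, including the Gaudron--R\'emond isogeny bound plus Faltings height comparison for Case i, and the Gaudron--R\'emond decomposition into controlled simple factors, the auxiliary comparison lemma, David's lower bound, and the height control via the remark after Corollaire~1.5 for Case ii. The route and the key inputs coincide with the paper's argument.
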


\begin{cor}\label{GaloisOrbitGeneralCase}
Suppose $a$ is defined over a finitely generated field $k$.
There exist positive constants $c_1=c_1(A_a,k)$ and $c_2=c_2(A_a,k)$ satisfying the following property:

For any point $t\in\Sigma\cap Y$ of complexity $n$ defined over a finitely extension $k(t)$ of $k$,
\[
[k(t):k]\geqslant c_1n^{c_2}.
\]
\begin{proof} This follows from Proposition~\ref{GaloisOrbitNumberField} and a specialization argument. The case where $n=\text{minimal degree of polarized isogenies }(A_a,\lambda_a)\rightarrow(A_{[\pi]t},\lambda_{[\pi]t})$ is proved by Orr \cite[Theorem 5.1]{OrrFamilies-of-abe} (possibly combined with \cite[Theorem 5.6]{MartinThesis}). The case where $n=N(t)$, the order of $t$ as a torsion point of $\mathfrak{A}_{g,[\pi]t}$, follows from the standard specialization argument introduced by Raynaud (see \cite[Section 5]{OrrFamilies-of-abe} or \cite[Section 7]{RaynaudCourbes-sur-une}).
\end{proof}
\end{cor}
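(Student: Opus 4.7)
The plan is to bootstrap from the number-field case, Proposition~\ref{GaloisOrbitNumberField}, to the general finitely generated case via a standard specialization argument, handling separately the two possible meanings of the complexity $n$.

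First I would fix a finitely generated field of definition $k$ for $a$, and consider a point $t\in\Sigma\cap Y$ defined over a finite extension $k(t)/k$, with complexity $n$. By definition of complexity, either (i) $n$ equals the minimal degree of a polarized isogeny $(A_a,\lambda_a)\to(A_{[\pi]t},\lambda_{[\pi]t})$, or (ii) $n$ equals $N(t)$, the order of $t$ as a torsion point on the fiber. The two cases have essentially the same shape but rely on different arithmetic inputs, so I treat them separately.

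In case (i), the desired bound is not really about the torsion structure at all: it is a statement about the number of pure Hecke translates of the abelian variety $A_{[\pi]t}$. Over $\bar{\Q}$ this is Proposition~\ref{GaloisOrbitNumberField}, case i, which rests on Gaudron--R\'emond and Masser--W\"ustholz type estimates together with Faltings' comparison of heights under isogeny. To pass from $\bar{\Q}$ to arbitrary finitely generated $k$, I would invoke Orr's specialization argument \cite[Theorem~5.1]{OrrFamilies-of-abe} (which uses Zarhin's trick via \cite[Theorem~5.6]{MartinThesis} to reduce polarized isogenies to plain isogenies and back): spread $A_a$ out over a $\Z$-scheme of finite type with function field $k$, pick a closed point $\mathfrak{s}$ whose residue field is a number field over which the specializations of $A_a$ and $A_{[\pi]t}$ remain linked by a polarized isogeny of the same minimal degree, and transport the Galois orbit bound from the specialization back up to $k(t)$.

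Case (ii) is the torsion order case. Over $\bar{\Q}$ it is handled by Proposition~\ref{GaloisOrbitNumberField}, case ii, where one decomposes $A_{[\pi]t}$ up to controlled isogeny into a product of simple factors (Gaudron--R\'emond) and applies David's lower bound \cite[Corollaire~1.5]{DavidMinorations-de-} on $[k(p_1(t)):\Q]$ in terms of the order $N(p_1(t))$, using Faltings to compare heights with $h_F(A_a)$. For general $k$, the standard Raynaud-style specialization argument applies verbatim: spread $\mathfrak{A}_{g,[\pi]t}$ into a family over an integral $\Z$-scheme whose generic fiber is defined over $k$, and pick a closed point $\mathfrak{s}$ with residue field a number field where the reduction preserves the order of the torsion section (this is automatic once the characteristic is prime to $N(t)$), so that the Galois orbit upstairs dominates the Galois orbit of the specialization. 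The reference \cite[Section~5]{OrrFamilies-of-abe} or \cite[Section~7]{RaynaudCourbes-sur-une} provides this mechanism; combining with case~ii of Proposition~\ref{GaloisOrbitNumberField} yields the bound. The two cases together give constants depending only on $A_a$ and $k$, as required.

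The technical nuisance I would expect is bookkeeping of the dependencies: one must verify that specialization can be arranged so that the residue field degree over $\Q$ is controlled by $[k(t):k]$ up to a factor depending only on $k$ (not on $t$), and that the Faltings height of the specialization is bounded in terms of $h_F(A_a)$ uniformly in $t$. Since the integral model can be chosen once and for all depending on $k$ and $A_a$ alone, these constants can indeed be absorbed into $c_1$ and $c_2$, which is exactly what makes the statement work.
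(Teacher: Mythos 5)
Your proposal is correct and follows essentially the same approach as the paper: the paper also splits into the same two cases according to which quantity realizes the complexity, cites Orr's Theorem~5.1 (combined with \cite[Theorem~5.6]{MartinThesis}) for the minimal-isogeny-degree case, and appeals to the Raynaud-style specialization argument for the torsion-order case; your extra paragraph on the bookkeeping of constants under specialization is exactly the concern both Orr and Raynaud address, and since the integral model can be fixed once and for all depending only on $A_a$ and $k$, it is absorbed into $c_1$ and $c_2$ as you say.
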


\section{End of the proof in the torsion case}\label{SectionEndOfTheProofForTheTorsionCase}
In this section, $Y$ is always an irreducible subvariety of $\mathfrak{A}_g$, $a\in\cA_g$ and $\Sigma$ is the set of all $a$-strongly special points of $\mathfrak{A}_g$.
\begin{thm}\label{WeaklySpecialZariskiDense}
If $\bar{Y\cap\Sigma}^{\Zar}=Y$, then the union of all positive-dimensional weakly special subvarieties contained in $Y$ is Zariski dense in $Y$.
\begin{proof} Let $\Sigma_1$ be the set of points $t\in Y\cap\Sigma$ such that there is a positive-dimensional block $B\subset\tilde{Y}$ with $t\in \unif(B)$. Let $Y_1$ be the Zariski closure of $\Sigma_1$. Let $k$ be the finitely generated field $k(a)$. Enlarge $k$ if necessary such that both $Y$ and $Y_1$ are defined over $k$.

Let $t$ be a point in $Y\cap\Sigma$ of complexity $n$. By Corollary \ref{GaloisOrbitGeneralCase}, there exist positive constants $c_1$ and $c_2$ depending only on $g$, $A_a$ and $k$ such that
\[
[k(t):k]\geqslant c_1n^{c_2/2}.
\]

But all $\gal(\bar{k}/k)$-conjugates of $t$ are contained in $Y\cap \Sigma$ and have complexity $n$. By Proposition~\ref{PilaWilkieForComplexity}, the preimages in $\cF$ of these points are contained in the union of $c(Y,\tilde{a},c_2/4)n^{c_2/4}$ definable blocks, each of these blocks being contained in $\tilde{Y}$.

For $n$ large enough, $c_1n^{c_2/2}>cn^{c_2/4}$. Hence for $n\gg0$, there exists a definable block $B\subset\tilde{Y}$ such that $\unif(B)$ contains at least two Galois conjugates of $t$, and therefore $\dim B>0$ since blocks are connected. So being in $\unif(B)$, those conjugates of $t$ are in $\Sigma_1$. But $Y_1$ is defined over $k$, so $t\in Y_1$.

In summary, all points of $Y\cap\Sigma$ of large enough complexity are in $\Sigma_1$. This excludes only finitely many points of $Y\cap\Sigma$. So $Y_1=Y$.

Let $\Sigma_2$ be the set of points $t\in Y\cap\Sigma$ such that there is a positive-dimensional connected semi-algebraic set $B^\prime\subset\tilde{Y}$ with $t\in \unif(B^\prime)$. Let $Y_2$ be the Zariski closure of $\Sigma_2$. By definition of blocks, $\Sigma_2=\Sigma_1$, and hence $Y_2=Y_1=Y$. But the Ax-Lindemann theorem (in the form of Theorem~\ref{Ax-Lindemann}) implies that the irreducible component $Z$ of $\bar{\unif(B^\prime)}^{\Zar}$ containing $t$ is weakly special. Moreover $\dim(Z)>0$ since $\dim(B^\prime)>0$. Therefore every point $t\in\Sigma_2$ is contained in some positive-dimensional weakly special subvariety of $\mathfrak{A}_g$. Now the conclusion follows. 
\end{proof}
\end{thm}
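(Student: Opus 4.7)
The plan is to run the standard Pila--Zannier counting strategy, using the Galois lower bound of Corollary \ref{GaloisOrbitGeneralCase} as the large side and the Pila--Wilkie block count of Proposition \ref{PilaWilkieForComplexity} as the small side, and then to feed the positive-dimensional semi-algebraic pieces so produced into the mixed Ax--Lindemann theorem (Theorem \ref{Ax-Lindemann}) to extract weakly special subvarieties.

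First I would fix a finitely generated field of definition $k$ containing $k(a)$ and over which $Y$ is defined. Define
\[
\Sigma_1 := \{t \in Y \cap \Sigma : \text{there exists a positive-dimensional definable block } B \subset \tilde{Y} \text{ with } \tilde{t} \in B \text{ for some } \tilde{t} \in \unif^{-1}(t) \cap \cF\},
\]
and let $Y_1 := \overline{\Sigma_1}^{\Zar}$. Enlarge $k$ so that $Y_1$ is also defined over $k$. The aim of the first half of the proof is to show $Y_1 = Y$.

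For this, pick $t \in Y \cap \Sigma$ of complexity $n$. Every $\gal(\bar{k}/k)$-conjugate $t'$ of $t$ again lies in $Y \cap \Sigma$ (since $Y$ is $k$-rational and $\Sigma$ is Galois-stable as both the polarized isogeny condition and the torsion condition are) and has the same complexity $n$. Corollary \ref{GaloisOrbitGeneralCase} gives constants $c_1, c_2 > 0$ depending only on $A_a$ and $k$ such that the number of such conjugates is at least $c_1 n^{c_2}$. On the other hand, Proposition \ref{PilaWilkieForComplexity} applied with $\epsilon := c_2/2$ produces, for some constant $c = c(Y,\tilde{a},\epsilon)$, at most $c\,n^{c_2/2}$ definable blocks in $\tilde{Y}$ covering every point of complexity at most $n$ in $\tilde{Y} \cap \tilde{\Sigma}$. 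For $n$ sufficiently large we have $c_1 n^{c_2} > c\,n^{c_2/2}$, so by pigeonhole at least one block $B \subset \tilde{Y}$ contains (lifts of) two distinct Galois conjugates of $t$; since blocks are connected, any such $B$ is necessarily positive-dimensional. Hence those conjugates lie in $\Sigma_1 \subset Y_1$. But $Y_1$ is $k$-stable and contains a Galois conjugate of $t$, so $t \in Y_1$. Thus $Y \cap \Sigma \subset Y_1$ outside of finitely many exceptional complexities, and together with the hypothesis $\overline{Y \cap \Sigma}^{\Zar} = Y$ this forces $Y_1 = Y$.

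It remains to replace ``block'' by ``weakly special'' via Ax--Lindemann. Given $t \in \Sigma_1$ with lift $\tilde{t} \in B \subset \tilde{Y}$ and $\dim B > 0$, let $B'$ be a connected positive-dimensional semi-algebraic subset of $\cX^+_{2g,\mathrm{a}}$ containing $\tilde{t}$ and contained in $\unif^{-1}(Y)$ (e.g.\ take $B'$ to be a small semi-algebraic neighbourhood of $\tilde{t}$ in $B$, pushed out of $\cF$ as necessary while staying inside $\unif^{-1}(Y)$). Theorem \ref{Ax-Lindemann} then shows that every irreducible component of $\overline{\unif(B')}^{\Zar}$ is weakly special; the component containing $t$ is positive-dimensional because $\dim B' > 0$ and is contained in $Y$ because $\unif(B') \subset Y$. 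Consequently every $t \in \Sigma_1$ lies in a positive-dimensional weakly special subvariety of $Y$, and since $\Sigma_1$ is Zariski dense in $Y = Y_1$, so is the union of those weakly special subvarieties.

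The main obstacle, and the only real point of asymmetry in the argument, is the Galois vs.\ block-count comparison: the proof hinges on having a \emph{polynomial} lower bound on the Galois orbit in terms of the complexity $n$ (packaging both the isogeny degree and the torsion order), as supplied by Corollary \ref{GaloisOrbitGeneralCase} through the work of Gaudron--R\'emond, David, and Orr. Once both sides are polynomial in $n$ with an honest gap after choosing $\epsilon$ small, the remainder of the proof is a formal combination of Pila--Wilkie and mixed Ax--Lindemann.
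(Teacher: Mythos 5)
Your proof is correct and follows essentially the same Pila--Zannier strategy as the paper: the same $\Sigma_1$ and $Y_1$, the same Galois-lower-bound-versus-block-count pigeonhole to get $Y_1 = Y$, and the same appeal to mixed Ax--Lindemann to convert a positive-dimensional block around a point of $\Sigma_1$ into a positive-dimensional weakly special subvariety inside $Y$. The only cosmetic differences are the choice of exponent in the Pila--Wilkie application (you use $\epsilon = c_2/2$ against the full $c_1 n^{c_2}$ bound, where the paper halves both sides) and that you bypass the paper's explicit intermediate set $\Sigma_2$ by directly shrinking a block to a connected semi-algebraic neighbourhood, which is exactly what the paper's ``by definition of blocks'' remark packages.
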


\begin{proof}[Proof of Theorem~\ref{MainTheoremZannier}]
Let $S$ be the smallest connected mixed Shimura subvariety containing $Y$. Assume $S$ is associated with the connected mixed Shimura datum $(P,\cX^+)$. Let $(G,\cX^+_G):=(P,\cX^+)/\cR_u(P)$. By Theorem~\ref{WeaklySpecialZariskiDense} and \cite[Theorem 12.2]{GaoTowards-the-And}, such a non-trivial group $N$ exists: $N$ is the maximal normal subgroup of $P$ such that the followings hold:
\begin{itemize}
\item there exists a diagram of Shimura morphisms
\[
\begin{diagram}
(P,\cX^+) &\rTo^{\rho} &(P^\prime,\cX^{\prime+}):=(P,\cX^+)/N &\rTo^{\pi^\prime} &(G^\prime,\cX^{\prime+}_G):=(P^\prime,\cX^{\prime+})/\cR_u(P^\prime) \\
\dTo^{\unif} & &\dTo^{\unif^\prime}  & &\dTo^{\unif^\prime_G}\\
S &\rTo^{[\rho]} &S^\prime &\rTo^{[\pi^\prime]} &S^\prime_G
\end{diagram}
\]
(then $S^\prime$ is by definition a connected Shimura variety of Kuga type)\\
\item the union of positive-dimensional weakly special subvarieties which are contained in $Y^\prime:=\bar{[\rho](Y)}$ is not Zariski dense in $Y^\prime$; \\
\item $Y=[\rho]^{-1}(Y^\prime)$.
\end{itemize}

\begin{enumerate}
\item
We prove the theorem by induction on $g$. When $g=1$, the only non-trivial case is when $Y$ is a curve. But then $Y$ must be weakly special by Theorem~\ref{WeaklySpecialZariskiDense}. Remark that this case has also been proved by Andr\'{e} \cite[Lecture 4]{AndreShimura-varieti} when he proposed the mixed Andr\'{e}-Oort conjecture.

When $\dim([\pi](Y))=0$, this is the Manin-Mumford conjecture by Corollary \ref{GeneralizedHeckeIsogeny}. Hence we only have to deal with the case $\dim([\pi](Y))=1$. Remark that in this case $[\pi](Y)$ is weakly special by the main result of \cite{OrrFamilies-of-abe}, and hence equals $\unif_G\left(G^{\prime\prime}(\R)^+\tilde{y}\right)$ for some $G^{\prime\prime}<\GSp_{2g}$ of positive dimension and $\tilde{y}\in\H_g^+$. Now there are two cases:

If $\dim([\pi^\prime](Y^\prime))=0$, then $[\pi^\prime](Y^\prime)$ is a point. In this case $Y^\prime$ is a subvariety of an abelian variety. The hypothesis $\bar{Y\cap\Sigma}=Y$ implies that $Y^\prime$ contains a Zariski dense subset of torsion points. Therefore by the result of the Manin-Mumford conjecture, $Y^\prime$ is a special subvariety, i.e. the translate of an abelian subvariety by a torsion point. But the union of positive-dimensional weakly special subvarieties which are contained in $Y^\prime:=\bar{[\rho](Y)}$ is not Zariski dense, so $Y^\prime$ is a point. Therefore $Y$ is weakly special by definition.

If $\dim([\pi^\prime](Y^\prime))=1$, then $N/\cR_u(N)$ is trivial because the dimension of $[\pi](Y)=\unif_G\left(G^{\prime\prime}(\R)^+\tilde{y})\right)$ is $1$. Therefore $V_N:=\cR_u(N)<V_{2g}$ is non-trivial since $N$ is non-trivial.

Denote for simplicity by $B:=[\pi^\prime](Y^\prime)=\unif^\prime_G(G^{\prime\prime}(\R)^+\rho(\tilde{y}))$ and $X:=[\pi^\prime]^{-1}(B)$. Then $X\rightarrow B$ is a family of abelian varieties of dimension $g^\prime$. We have $g^\prime<g$ since $V_N$ is non-trivial. Besides, $X\rightarrow B$ is non-isotrivial because otherwise $G^{\prime\prime}$ acts trivially on $V_{2g}/V_N$, and therefore $G^{\prime\prime}\lhd P^\prime$. This contradicts the maximality of $N$. Hence there exists, up to taking finite covers of $X\rightarrow B$, a cartesian diagram
\[
\begin{diagram}
X &\rTo^i &\mathfrak{A}_{g^\prime} \\
\dTo & &\dTo \\
B &\rTo^{i_B} &\cA_{g^\prime}
\end{diagram}
\]
such that both $i$ and $i_B$ are finite. Apply induction hypothesis to $i(Y^\prime)\subset\mathfrak{A}_{g^\prime}$, we get that $i(Y^\prime)$ is weakly special. By the geometric interpretation of weakly special subvarieties (Proposition~\ref{DescriptionWeaklySpecial}), $i^{-1}(i(Y^\prime))$ is irreducible. Therefore $Y^\prime=i^{-1}(i(Y^\prime))$ since they are of the same dimension. So $Y^\prime$ is a weakly special subvariety of $S^\prime$ (again by Proposition~\ref{DescriptionWeaklySpecial}). But then $Y^\prime$ must be a point by definition of $Y^\prime$. Hence $Y$ is weakly special by definition.

\item
This part of the theorem is the intersection of the Andr\'{e}-Oort conjecture and Conjecture~\ref{AndrePinkConjecture}. It holds in a more general situation (see the forthcoming thesis \cite[Theorem~4.3.2]{GaoThesis}). The proof, which requires more background knowledge about mixed Shimura varieties, is similar to \cite[Theorem 13.6]{GaoTowards-the-And}, except that the lower bound used in that article is replaced by a result similar to (but weaker than) Corollary \ref{GaloisOrbitGeneralCase}. More explicitly:

Since $a\in\cA_g$ is a special point, every $a$-strongly special point is a special point of $\mathfrak{A}_g$. Therefore $Y^\prime$ contains a Zariski dense subset of special points. Besides, $Y$ is $a$-special iff $Y$ is a special subvariety of $\mathfrak{A}_g$ by Proposition~\ref{DescriptionWeaklySpecial}.

Suppose that $Y$ is not $a$-special. Then $Y^\prime$ is not a special subvariety of $S^\prime$. On the other hand, $Y^\prime$ is defined over a number field since every point in $\Sigma_a^\prime$ is.

Define $V_N:=\cR_u(N)<V_{2g}$ and $G_N:=N/V_N\lhd G<\GSp_{2g}$. The reductive group $G$ decomposes as an almost direct product $Z(G)H_1...H_r$ with all $H_i$'s simple. Without any loss of generality, we may assume that $H_1$,...,$H_l$ are the simple factors of $G$ which appear in the decomposition of $G_N$. Define $G_N^\perp:=H_{l+1}...H_r$. Define $T:=\MT(a)$, then $T$ is a torus since $a$ is a special point of $\cA_g$.

Let $G_1:=G_N^\perp T$. This is a subgroup of $G$ (and therefore a subgroup of $\GSp_{2g}$). Moreover, it defines a connected Shimura subdatum $(G_1,\cX^+_{G_1})$ of $(\GSp_{2g},\H_g^+)$ and hence its associated connected Shimura subvariety $S_{G_1}$ of $\cA_g$ such that $a\in S_{G_1}$. Recall that $(P^\prime,\cX^{\prime+})=(P,\cX^+)/N$ and $(G^\prime,\cX_G^{\prime+})=(G,\cX^+_G)/G_N$. Therefore the natural Shimura morphisms
\[
(G_1,\cX^+_{G_1})\hookrightarrow(G,\cX^+_G)\twoheadrightarrow(G^\prime,\cX_G^{\prime+})
\]
identify $\cX^+_{G_1}$ and $\cX_G^{\prime+}$.

Recall that $P=V\rtimes G$ gives rises to a connected mixed Shimura datum $(P,\cX^+)$. So $V:=\cR_u(P)$ is a $G_1$-module such that the action of $G_1$ on $V$ induces a Hodge-structure of type $\{(-1,0),(0,-1)\}$ on $V$. Therefore by \cite[2.17]{PinkThesis}, there exists a connected mixed Shimura datum $(P_1,\cX^+_1)$ such that $P_1=V\rtimes G_1$ and $(G_1,\cX_{G_1})=(P_1,\cX^+_1)/V$. $(P_1,\cX^+_1)$ is a connected mixed Shimura subdatum of $(P,\cX^+)$. Since $N\lhd P$, we have $V_N\lhd P_1$. Now we have the following diagram of Shimura morphisms:
\[
\begin{diagram}
(P_2,\cX^+_2):=(P_1,\cX^+_1)/V_N &\lOnto^{\rho^\prime} &(P_1,\cX^+_1) &\rInto^j &(P,\cX^+) &\rTo^{\rho} &(P^\prime,\cX^{\prime+}) \\
\dTo_{\unif_2} & &\dTo & &\dTo & &\dTo_{\unif^\prime} \\
S_2 &\lOnto^{[\rho^\prime]} &S_1 &\rTo^{[j]} &S &\rTo^{[\rho]} &S^\prime
\end{diagram}.
\]
Then the map $\rho\circ j\circ\rho^{\prime-1}\colon(P_2,\cX^+_2)\rightarrow(P^\prime,\cX^{\prime+})$ is well-defined and is a Shimura morphism. Hence $Y^\prime$ is a special subvariety of $S^\prime$ iff $Y_2:=([\rho]\circ[j]\circ[\rho^\prime]^{-1})^{-1}(Y^\prime)$ is a special subvariety of $S_2$. Hence it suffices to prove that $Y_2$ is special. But $\cX^+_2$ and $\cX^{\prime+}$ are identified under $\rho\circ j\circ\rho^{\prime-1}$ by the discussion in the last paragraph, so the union of positive-dimensional weakly special subvarieties of $Y_2$ is not Zariski dense in $Y_2$ by choice of $Y^\prime$. Therefore we are left to prove that the set of special points of $Y_2$ which do not lie in any positive-dimensional special subvariety is finite. Remark that $Y_2$ is defined over a number field (which we call $k$) since $Y^\prime$ is.

Take the pure part of the diagram above, we get the following diagram of Shimura morphisms between pure Shimura data and pure Shimura varieties:
\[
\begin{diagram}
(G_2,\cX^+_{G_2}) &\lTo^{\rho^\prime_G}_{\sim} &(G_1,\cX^+_{G_1}) &\rInto^{j_G} &(G,\cX^+_G) &\rTo^{\rho_G} &(S^\prime,\cX_G^{\prime+}) \\
\dTo & &\dTo & &\dTo & &\dTo \\
S_{G_2} &\lTo^{[\rho^\prime_G]}_{\sim} &S_{G_1} &\rTo^{[j_G]} &S_G &\rTo^{[\rho_G]} &S^\prime_G
\end{diagram}.
\]
Therefore $\cX^+_{G_2}$ can be seen as a subset of $\cX^+_G$, and hence of $\H_g^+$. Since $\bar{Y\cap\Sigma_a^\prime}=Y$, we have $\bar{Y^\prime\cap[\rho](\Sigma_a^\prime)}=Y^\prime$. But then by the identification of $\cX_2^+$ and $\cX^{\prime+}$, we get that \textbf{in $S_2$, the subset of torsion points over $a^\prime$, where $A_{a^\prime}$ is isogenous to $A_a$, is Zariski dense in $Y_2$}.

For any torsion point $t$ over $a^\prime$ such that $A_{a^\prime}$ is isogenous to $A_a$, take a representative $\tilde{t}\in \unif_2^{-1}(t)$ in the fundamental set $\cF$ as in \cite[Section 10.1]{GaoTowards-the-And} (this fundamental set is similar to the one defined in Theorem~\ref{UniversalFamilyTheorem}.(3)). Denote by $V_2:=\cR_u(P_2)$, which is a $\Q$-vector group. Then $\tilde{t}=(\tilde{t}_V,\tilde{t}_G)\in V_2(\Q)\times(\H_g^+\cap M_{2g}(\bar{\Q}))$ and hence we can define its height. By choice of $\cF$, $H(\tilde{t}_V)$ is bounded by $N(t)$, the order of $t$ as a torsion point of $A_{a^\prime}$. But up to constants depending only on $a$ (or more explicitely, only on $H(\tilde{a})$), $H(\tilde{t}_G)$ is polynomially bounded from above by the minimal degree of the isogenies $A_{a^\prime}\rightarrow A_a$. This follows from \cite[Proposition 4.1, Section 4.2]{OrrFamilies-of-abe}. But the minimal degree of the isogenies $A_{a^\prime}\rightarrow A_a$ is polynomially bounded from above by the Galois orbit of $a^\prime$. This follows from \cite[Theorem 5.1]{OrrFamilies-of-abe}. Hence by \cite[Proposition 13.3]{GaoTowards-the-And},
\[
|\gal(\bar{\Q}/k)t|\gg_{g,\tilde{a}}H(\tilde{t})^{\mu(g,\tilde{a})}
\]
for some $\mu(g,\tilde{a})>0$. Hence for $H(\tilde{t})\gg0$, Pila-Wilkie \cite[3.2]{PilaO-minimality-an} implies that $\exists\sigma\in\gal(\bar{\Q}/k)$ such that $\tilde{\sigma(t)}$ is contained in a connected semi-algebraic subset of $\unif_2^{-1}(Y_2)\cap\cF$ of positive dimension. Now the Ax-Lindemann theorem (Theorem~\ref{Ax-Lindemann}) implies that $\sigma(t)$ is then contained in some weakly special subvariety $Z$ of $S_2$ such that $\dim Z>0$. Hence $\sigma^{-1}(Z)$ is weakly special containing a special point $t$, and therefore $\sigma^{-1}(Z)$ is special of positive dimension. To sum it up, the heights of the elements of
\begin{align*}
\{\tilde{t}\in \unif_2^{-1}(Y_2)\cap\cF\text{ special and }\unif_2(\tilde{t})\text{ is not contained in }\\
\text{a positive-dimensional special subvariety of }S_2\}
\end{align*}
is uniformly bounded from above. Therefore this set is finite by Northcott's theorem.
\end{enumerate}
\end{proof}

\section{Proof of the non-torsion case}\label{SectionProofOfTheNonTorsionCase}
We prove Theorem~\ref{MainTheoremNonTorsion} in this section. Let $Y$ be a curve in $\mathfrak{A}_g$, let $s\in\mathfrak{A}_g(\C)$ and let $\Sigma$ be the generalized Hecke orbit of $s$. For simplicity, we will denote by $(A,\lambda):=(\mathfrak{A}_{g,[\pi]s},\lambda_{[\pi]s})$ the polarized abelian variety attached to $[\pi](s)$ in this section.
Assume that $s$ is not a torsion point of $A$. Throughout this section, we assume that $Y$ is not contained in a fiber of $[\pi]\colon\mathfrak{A}_g\rightarrow\cA_g$ (otherwise this is a special case of the Mordell-Lang conjecture, which is proved in a series of works of Vojta, Faltings and Hindry).

We fix some notation here. Let $\cB$ be a symplectic basis of $H_1(A,\Z)$ with respect to the polarization $\lambda$. Let $\tilde{s}_G\in\H_g^+$ be the period matrix of $(A,\lambda)$ with respect to the basis $\cB$, then $\unif_G(\tilde{s}_G)=[\pi]s$. Now let $\tilde{s}=(\tilde{s}_V,\tilde{s}_G)\in V_{2g}(\R)\times\H^+_g\cong\cX^+_{2g,\mathrm{a}}$ be a point in $\pi^{-1}(\tilde{s}_G)\cap \unif^{-1}(s)$. In the whole section, we will fix $\cB$ to be the $\Q$-basis of $V_{2g}$ as in $\mathsection$\ref{SectionRationalRepresentationIsogeny}.

Denote by $k$ the definition field of $s$. Then $A$ is defined over the finitely generated field $k$.

\subsection{Complexity of points in a generalized Hecke orbit}\label{SubsectionComplexityNonTrosionCase}
Let $\unif\colon \cX^+_{2g,\mathrm{a}}\rightarrow\mathfrak{A}_g$ be the uniformization map and let $\cF$ be the fundamental set in $\cX^+_{2g,\mathrm{a}}$ defined in Theorem~\ref{UniversalFamilyTheorem}.(3). Let
\[
\tilde{Y}:=\unif^{-1}(Y)\cap\cF \text{ and } \tilde{\Sigma}:=\unif^{-1}(\Sigma)\cap\cF.
\]

Let $t\in\Sigma$. Let $f_t$ be as in Corollary \ref{GeneralizedHeckeIsogeny} (i.e. a polarized isogeny $(A,\lambda)\rightarrow(\mathfrak{A}_{g,[\pi]t},\lambda_{[\pi]t})$ of minimal degree). Define
\[
n_t:=\min\{n\in\N|~\exists\varphi\in\big(\End(A,\lambda)\big)\text{ such that }nt\in f_t\big(\varphi(s)+A(\C)_{\mathrm{tor}}\big)\}.
\]
The existence of such an $n_t$ is guaranteed by Corollary \ref{GeneralizedHeckeIsogeny}.
Furthermore, let $s_t:=\unif\left((\tilde{s}_V/n_t,\tilde{s}_G)\right)\in\mathfrak{A}_{g,[\pi]s}=A$. Then there exist by definition of $n_t$
\begin{itemize}
\item $\varphi_t\in\End\left((A,\lambda)\right)$;
\item $\delta_t$ a torsion point of $A$
\end{itemize}
such that
\begin{equation}\label{HeckeOribtFinalFormWeUse}
f_t\left(\varphi_t(s_t)+\delta_t\right)=t.
\end{equation}
The notation $n_t$, $f_t$, $\varphi_t$, $s_t$ and $\delta_t$ will be used throughout this section.

\begin{defn}\label{ComplexityNonTorsionCase}
Define the \textbf{complexity} of $t\in\Sigma$ to be
\[
\max\left(n_t,N(\delta_t)\right)
\]
where $N(\delta_t)$ is the order of $\delta_t$. In addition, define the \textbf{complexity} of any point of $\tilde{\Sigma}$ to be the complexity of its image in $\Sigma$.
\end{defn}
The fact that this complexity is a ``good enough" parameter will be proved in $\mathsection$\ref{GoodParameter}.

\subsection{Galois orbit}
In contrast to the torsion case, we deal with the Galois orbit at first for the non-torsion case. Keep the notation of the beginning of this section and $\mathsection$\ref{SubsectionComplexityNonTrosionCase}.

\begin{prop}\label{GaloisOrbitNonTorsionCase}
Let $t\in\Sigma$ be of complexity $n$, then
\[
[k(t):k]\geqslant c_3n^{c_4}
\]
where $c_3=c_3(A,\lambda,s)$ and $c_4=c_4(A,\lambda,s)$ are two positive constants.
\begin{proof} By \cite[Theorem~5.1]{OrrFamilies-of-abe} and \cite[Theorem~5.6]{MartinThesis}, there exist positive constants $c_5=c_5(A,\lambda)$ and $c_6=c_6(A,\lambda)$ such that
\begin{equation}\label{MinimumDegree}
\deg(f_t)\leqslant c_5[k(t):k]^{c_6}
\end{equation}

The abelian variety $A$ is defined over $k$. By the main result of \cite{MasserSmall-values-of} and the standard specialization argument introduced by Raynaud (see \cite[Section 5]{OrrFamilies-of-abe} or \cite[Section 7]{RaynaudCourbes-sur-une}), there exist two positive constants $c_9$ and $c_{10}$ depending only on $A$ and $k$ such that for any torsion point $q\in A$ of order $N(q)$, we have
\begin{equation}\label{MasserTorsionPoint}
[k(q):k]\geqslant c_9N(q)^{c_{10}}.
\end{equation}

$\framebox{\textit{Case i}}$ $N(\delta_t)^{c_{10}/2}\geqslant n_t^{2g^2+4g+1}$. By \cite[Proposition 1]{HindryAutour-dune-con} or \cite[Theorem 2.1.2]{McQuillanDivision-points} and the standard specialization argument introduced by Raynaud (see \cite[Section 5]{OrrFamilies-of-abe} or \cite[Section 7]{RaynaudCourbes-sur-une}), there exists a positive constant $c_{11}=c_{11}(A,s,k)$ such that
\[
\gal\left(k(\varphi_t(s_t),A[n_t])/k(A[n_t])\right)\leqslant c_{11}n_t^{2g}.
\]
Hence
\begin{equation}\label{NonTorsionPart}
[k(\varphi_t(s_t)):k]=|\gal\left(k(\varphi_t(s_t),A[n_t])/k(A[n_t])\right)|[k(A[n_t]):k]\leqslant c_{11}^\prime n_t^{2g^2+4g+1}
\end{equation}
for another positive constant $c_{11}^\prime$ depending only on $A$, $s$ and $k$.
Now by \eqref{NonTorsionPart}, \eqref{MasserTorsionPoint} and the assumption for this case,
\begin{equation}\label{NonTorsionBound}
[k(\varphi_t(s_t),\delta_t):k(\varphi_t(s_t))]\geqslant c_{12}\frac{N(\delta_t)^{c_{10}}}{n_t^{2g^2+4g+1}}\geqslant c_{12}N(\delta_t)^{c_{10}/2}
\end{equation}
for a positive constant $c_{12}=c_{12}(A,s,k)$.

Since $A$ is defined over the finitely generated field $k$, every element of $\mathrm{Aut}(\C/k)$ induces a homomorphism $A(\C)\rightarrow A(\C)$. It is not hard to prove the following claim:
\begin{claim} For any $\sigma_1$, $\sigma_2\in\mathrm{Aut}\left(\C/k(\varphi_t(s_t))\right)$, $\sigma_1(\varphi_t(s_t)+\delta_t)=\sigma_2(\varphi_t(s_t)+\delta_t)$ iff $\sigma_2^{-1}\sigma_1\in\mathrm{Aut}\left(\C/k(\varphi_t(s_t),\delta_t)\right)$.
\end{claim}
This claim implies $[k(\varphi_t(s_t)+\delta_t):k]\geqslant [k(\varphi_t(s_t),\delta_t):k(\varphi_t(s_t))]$. Hence by \eqref{NonTorsionBound},
\[
[k(\varphi_t(s_t)+\delta_t):k]\geqslant c_{12}N(\delta_t)^{c_{10}/2}.
\]
Since $t=f_t(\varphi_t(s_t)+\delta_t)$, we have therefore
\begin{equation}\label{StepBeforeRemovingFT}
[k(t):k]\geqslant c_{12}\frac{N(\delta_t)^{c_{10}/2}}{\deg(f_t)}.
\end{equation}
Now the conclusion for this case follows from \eqref{MinimumDegree}, \eqref{StepBeforeRemovingFT} and the definition of complexity (recall that $k$ is the definition field of $s$, and therefore depends only on $s$).

$\framebox{\textit{Case ii}}$ $N(\delta_t)^{c_{10}/2}\leqslant n_t^{2g^2+4g+1}$. Roughly speaking, this case follows from the Kummer theory \cite[Appendix 2]{HindryAutour-dune-con}. Here are the details of the proof:

Let $\Delta:=\End\left((A,\lambda)\right)s$ and let $\bar{\Delta}:=\End(A)s\subset A$. Then $\bar{\Delta}$ is a finitely generated subgroup of $A$. Let $k^\prime$ be the smallest field over which all points of $\bar{\Delta}$ are defined, then $k^\prime$ depends only on $A$ and $s$. Then $\bar{\Delta}\subset A(k^\prime)$. Let $\Delta^\prime:=\Q\Delta\cap A(k^\prime)$ and let $\bar{\Delta}^\prime:=\Q\bar{\Delta}\cap A(k^\prime)$. Then $\bar{\Delta}^\prime$ contains $\bar{\Delta}$. By the Lang-N\'{e}ron theorem, the group $A(k^\prime)$ is finitely generated (because $k^\prime$ is finitely generated over $\Q$). Therefore $\bar{\Delta}^\prime$ is finitely generated and $\mathrm{rank}\bar{\Delta}^\prime=\mathrm{rank}\bar{\Delta}$. Hence $[\bar{\Delta}^\prime:\bar{\Delta}]$ is a finite number depending only on $k^\prime$, and hence only on $A$ and $s$. On the other hand, $\Delta\subset\bar{\Delta}\cap\Delta^\prime\subset\Delta+A(k^\prime)_{\mathrm{tor}}$. So $[\bar{\Delta}\cap\Delta^\prime:\Delta]$ is a finite number depending only on $k^\prime$, and hence only on $A$ and $s$. Therefore by
\[
[\Delta^\prime:\Delta]=[\Delta^\prime:\bar{\Delta}\cap\Delta^\prime][\bar{\Delta}\cap\Delta^\prime:\Delta]\leqslant[\bar{\Delta}^\prime:\bar{\Delta}][\bar{\Delta}\cap\Delta^\prime:\Delta],
\]
there exists $c_{13}>0$ depending only on $A$ and $s$ such that $[\Delta^\prime:\Delta]=c_{13}$.

For each $t\in\Sigma$, define another number $n_t^\prime:=\min\{n\in\N|~nt\in f_t\big(A(k^\prime)+A(\C)_{\mathrm{tor}}\big)\}$. Let $s^\prime\in A(k^\prime)$ be such that $n_t^\prime t=f_t(s^\prime+A(\C)_{\mathrm{tor}})$. Then because $t=f_t(\varphi_t(s_t)+\delta_t)$, we have
\[
s^\dagger:=s^\prime-n_t^\prime\varphi_t(s_t)\in A(\C)_{\mathrm{tor}}.
\]
But $n_t^\prime\varphi_t(s_t)+s^\dagger\in\Delta^\prime$, so
\begin{equation}\label{DeltaPrime}
n_t^\prime=\min\{n\in\N|~nt\in f_t(\Delta^\prime+A(\C)_{\mathrm{tor}})\}.
\end{equation}
However by definition,
\begin{equation}\label{Delta}
n_t=\min\{n\in\N|~nt\in f_t(\Delta+A(\C)_{\mathrm{tor}}).
\end{equation}
Compare \eqref{DeltaPrime} and \eqref{Delta}, we get
\begin{equation}\label{ntAndntPrime}
n_t/n_t^\prime\leqslant[\Delta^\prime:\Delta]\leqslant c_{13}.
\end{equation}

By \cite[Lemma 14]{HindryAutour-dune-con} or \cite[Corollary 2.1.5]{McQuillanDivision-points} and the standard specialization argument introduced by Raynaud (see \cite[Section 5]{OrrFamilies-of-abe} or \cite[Section 7]{RaynaudCourbes-sur-une}), there exists a positive constant $c_{14}=c_{14}(A,k^\prime)$ such that
\[
\gal\Big(k^\prime\big(\varphi_t(s_t),A[n_t^\prime N(\delta_t)]\big)/k^\prime\big(A[n_t^\prime N(\delta_t)]\big)\Big)\geqslant c_{14}n_t^\prime.
\]
But $t=f_t(\varphi_t(s_t)+\delta_t)$, so
\begin{equation}\label{StepBeforeRemovingFT2}
[k(t):k]\geqslant[k^\prime(t):k^\prime]\geqslant\frac{[k^\prime(\varphi_t(s_t)+\delta_t):k^\prime]}{\deg(f_t)}\geqslant \frac{c_{14}n_t^\prime}{\deg(f_t)}.
\end{equation}
Now the conclusion follows from \eqref{MinimumDegree}, \eqref{ntAndntPrime} and \eqref{StepBeforeRemovingFT2}.
\end{proof}
\end{prop}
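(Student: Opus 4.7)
The plan is to control $[k(t):k]$ from below by transferring the Galois orbit through the isogeny $f_t$ to the point $\varphi_t(s_t)+\delta_t$, and then splitting according to whether the complexity $n=\max(n_t,N(\delta_t))$ is realized by $n_t$ or by $N(\delta_t)$. The first ingredient is the polarized Masser--W\"ustholz-type bound of Orr and Martin, $\deg f_t\leqslant c_5[k(t):k]^{c_6}$, which together with the elementary inequality $[k(t):k]\cdot\deg f_t\geqslant [k(\varphi_t(s_t)+\delta_t):k]$ reduces the theorem to producing a polynomial lower bound of shape $[k(\varphi_t(s_t)+\delta_t):k]\geqslant cn^{c'}$; any such bound propagates to $[k(t):k]\geqslant c_3 n^{c_4}$ after rearranging exponents, paying only a factor $1/(1+c_6)$ in the exponent.

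In the torsion-dominated regime, say $N(\delta_t)^{c_{10}/2}\geqslant n_t^{2g^2+4g+1}$, I would use Masser's lower bound for torsion points, $[k(\delta_t):k]\geqslant c_9 N(\delta_t)^{c_{10}}$, upgraded from number fields via the standard Raynaud specialization argument. For the opposite direction, a Bertrand--Hindry type upper bound yields $[k(\varphi_t(s_t)):k]\leqslant c_{11}' n_t^{2g^2+4g+1}$, since $s_t$ is an $n_t$-division point of $s$ and $\varphi_t\in\End(A,\lambda)$; the $n_t^{2g}$ coming from division points of $s$ multiplied by the $n_t^{2g^2+2g}$ coming from rationalizing $A[n_t]$ gives the exponent. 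A short coset argument, exploiting that two $\mathrm{Aut}(\C/k(\varphi_t(s_t)))$-elements agreeing on $\varphi_t(s_t)+\delta_t$ must lie in the same coset of $\mathrm{Aut}(\C/k(\varphi_t(s_t),\delta_t))$, yields $[k(\varphi_t(s_t)+\delta_t):k]\geqslant [k(\varphi_t(s_t),\delta_t):k(\varphi_t(s_t))]$, which the regime assumption makes at least $c_{12} N(\delta_t)^{c_{10}/2}$.

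In the division-point-dominated regime the main tool is a Kummer-theoretic lower bound of Hindry--Bertrand--McQuillan: for a non-torsion point on an abelian variety, the Galois group acts with image of size $\gg n$ on the $n$-th division tower above the corresponding torsion tower. To apply this cleanly, I would first enlarge $k$ to a finite extension $k'/k$ over which all of $\End(A)s\subset A(k')$ is rational (finite generation coming from Lang--N\'eron), and replace the $\End(A,\lambda)$-defined parameter $n_t$ with the analogous $n_t'$ measured in the saturation $\Q\Delta\cap A(k')$; the ratio $n_t/n_t'$ is bounded by a constant $c_{13}$ depending only on $(A,s)$. The Hindry--McQuillan estimate then gives $|\gal(k'(\varphi_t(s_t),A[n_t'N(\delta_t)])/k'(A[n_t'N(\delta_t)]))|\geqslant c_{14}n_t'$, and since $\delta_t$ becomes rational over $k'(A[N(\delta_t)])$, this descends to the desired lower bound on $[k(\varphi_t(s_t)+\delta_t):k']$, hence on $[k(t):k]$ after dividing by $\deg f_t$.

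The main obstacle is the division-point case. Applying the Kummer estimate to $\varphi_t(s_t)$ rather than to $s$ itself, uniformly in $\varphi_t$, is the delicate point: one needs the Lang--N\'eron finiteness of $A(k')$ to trade the (non-finitely-generated) $\End(A,\lambda)$-module $\Delta=\End(A,\lambda)s$ for the saturated $\Delta'=\Q\Delta\cap A(k')$, ensuring that the index $[\Delta':\Delta]$ (and hence $n_t/n_t'$) is bounded independently of $t$. A secondary bookkeeping step is the Raynaud-style specialization threading both the Masser bound and the Hindry--McQuillan bound from number fields down to our finitely generated base $k$; this is now standard, but the constants must be tracked through both invocations.
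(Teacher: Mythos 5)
Your proposal follows essentially the same path as the paper: Orr's Masser--W\"ustholz-type isogeny bound to handle $\deg f_t$, the case split on $N(\delta_t)^{c_{10}/2}$ versus $n_t^{2g^2+4g+1}$, Masser's torsion lower bound plus a Bertrand--Hindry upper bound on $[k(\varphi_t(s_t)):k]$ and the coset observation in the torsion-dominated regime, and the Kummer-theoretic (Hindry/McQuillan) lower bound after saturating to $\Delta'=\Q\Delta\cap A(k')$ via Lang--N\'eron in the other regime, with Raynaud specialization threading all invocations down from number fields. One minor slip worth flagging: $\Delta=\End(A,\lambda)s$ is in fact finitely generated (since $\End(A,\lambda)$ is a $\Z$-lattice), so the parenthetical ``non-finitely-generated'' is misplaced; the real purpose of passing to $\Delta'$ is, as you otherwise correctly identify, to get a uniform bound on $n_t/n_t'$ so the Kummer estimate can be applied with constants independent of $t$.
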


\subsection{N\'{e}ron-Tate height in family}\label{GoodParameter}
Next we prove that the complexity defined in Definition~\ref{ComplexityNonTorsionCase} is a good parameter. More explicitly we dispose of the following proposition:
\begin{prop}\label{GoodParameterProposition}
Let $Y$, $s$ and $\Sigma$ be as in the beginning of this section. Let $t\in\Sigma$. Let $f_t$, $n_t$, $s_t$, $\varphi_t$ and $\delta_t$ be as in $\mathsection$\ref{SubsectionComplexityNonTrosionCase}. Then
\[
\deg(\varphi_t)\leqslant c_{7}n_t^{c_{8}}\text{\quad and \quad}\deg(f_t)\leqslant c_7^\prime n_t^{c_8^\prime}
\]
for some positive constants $c_7=c_7(g,Y,s)$, $c_7^\prime=c_7^\prime(g,Y,s)$ and $c_8=c_8(g,Y,s)$, $c_8^\prime=c_8^\prime(g,Y,s)$.
\end{prop}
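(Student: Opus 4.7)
The plan is to combine the height identity forced by $f_t$ being a \emph{polarized} isogeny with a Silverman-type specialization bound for N\'eron--Tate heights along the curve $Y$, and to close the loop with a positive-definiteness lower bound coming from the Rosati involution on $\End(A,\lambda)$.

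Since $n_ts_t=s$ in the fiber $A=\mathfrak{A}_{g,[\pi]s}$ and $f_t(\varphi_t(s_t)+\delta_t)=t$, multiplying through by $n_t$ and discarding the torsion term $n_tf_t(\delta_t)$ (which has zero N\'eron--Tate height, since canonical heights annihilate torsion) gives $n_t^2\hat h_{[\pi]t}(t)=\hat h_{[\pi]t}(f_t(\varphi_t(s)))$. The relation $f_t^*\mathfrak{L}_{g,[\pi]t}=\mathfrak{L}_{g,[\pi]s}^{\otimes(\deg f_t)^{1/g}}$ coming from Theorem~\ref{AmpleLineBundleOverTheUniversalFamily} transports to canonical heights as $\hat h_{[\pi]t}\circ f_t=(\deg f_t)^{1/g}\hat h_{[\pi]s}$, yielding the key identity
\[
n_t^2\,\hat h_{[\pi]t}(t)=(\deg f_t)^{1/g}\,\hat h_{[\pi]s}(\varphi_t(s)).
\]

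I would upper-bound the left-hand side as follows. Because $Y$ is a curve not contained in a fiber of $[\pi]$, the restriction $[\pi]|_Y\colon Y\to C:=[\pi](Y)$ is finite; pulling back $\mathfrak{A}_g$ along the normalization of $C$ makes $Y$ into a multisection of an abelian scheme over a smooth curve, to which Silverman's specialization theorem applies and gives $\hat h_{[\pi]t}(t)\leqslant c_1 h_C([\pi]t)+c_2$, with $c_1,c_2$ depending only on $Y$ and $s$. Bounding $h_C\leqslant h_{\cA_g}+O(1)$, then combining with Faltings' inequality $h_F([\pi]t)\leqslant h_F([\pi]s)+\tfrac{1}{2}\log\deg(f_t)$ (already invoked in Proposition~\ref{GaloisOrbitNumberField}) and the polylogarithmic comparison between $h_F$ and Weil heights on $\cA_g$, yields $\hat h_{[\pi]t}(t)\ll \log\deg(f_t)+1$.

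For the lower bound on the right-hand side, the function $\varphi\mapsto\hat h_{[\pi]s}(\varphi(s))$ is a positive semidefinite quadratic form on $\End(A,\lambda)\otimes\R$, positive definite whenever $s$ is not contained in a proper abelian subvariety of $A$. In that generic case it dominates a positive multiple of $\tr(\varphi^\dagger\varphi)$ (with $\dagger$ the Rosati involution induced by $\lambda$), and the Hadamard-type inequality $\deg(\varphi)^{1/g}\ll\tr(\varphi^\dagger\varphi)$ yields $\hat h_{[\pi]s}(\varphi_t(s))\geqslant c_s\deg(\varphi_t)^{1/g}$ for some $c_s>0$ depending only on $s$. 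Plugging both bounds into the key identity produces
\[
(\deg f_t)^{1/g}\deg(\varphi_t)^{1/g}\ll n_t^2(\log\deg(f_t)+1);
\]
the logarithm on the right forces $\deg(f_t)$ to be polynomially bounded in $n_t$, and feeding that back yields the analogous polynomial bound on $\deg(\varphi_t)$.

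The main obstacle is the positive-definiteness step when $A$ is non-simple and $s$ lies on a proper abelian subvariety $A'\subsetneq A$: the quadratic form then has a non-trivial kernel (all endomorphisms vanishing on $A'$), so one must reduce to the smallest abelian subvariety of $A$ containing $s$ modulo torsion, replace $f_t$ and $\varphi_t$ by the induced data on that subvariety, and verify that minimality of $\varphi_t$ is preserved under the reduction so that the degree bounds transfer back to the original objects. A secondary technical point is making the Silverman specialization step uniform for all $t\in Y$, including the finitely many points where $Y\to C$ is ramified or where $Y$ meets the bad fibers; this is standard by enlarging the constants $c_1,c_2$.
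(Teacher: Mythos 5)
Your overall strategy matches the paper's: derive a N\'eron--Tate height identity from the polarized isogeny relation, bound $\hat h(t)$ from above by the base height $h([\pi]t)$ exploiting the fact that $Y$ is a curve (the paper credits this to Lin--Wang, which is the Silverman-type specialization you invoke), then compare to the Faltings height via the isogeny bound. The final ``logarithm beats power'' step is also the paper's. But there are two genuine divergences, one of which is a real gap.

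The gap is in your treatment of $\hat h_{[\pi]s}(\varphi_t(s))$. You keep this quantity intact and try to lower-bound it by $\deg(\varphi_t)^{1/g}$ via positive-definiteness of the Rosati form $\varphi\mapsto\hat h(\varphi(s))$; you correctly observe this fails when $s$ lies on a proper abelian subvariety $A'\subsetneq A$, but the proposed fix — replacing $A$ by the smallest abelian subvariety containing $s$ modulo torsion — does not obviously close. There is no reason $\varphi_t\in\End(A,\lambda)$ should preserve $A'$, and even if it did, $\deg(\varphi_t|_{A'})$ is not $\deg\varphi_t$, so the bound you obtain on the restricted degree does not transfer back to the quantity the proposition is actually about. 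The paper sidesteps this entirely: since $\varphi_t\in\End\bigl((A,\lambda)\bigr)$ means $\varphi_t$ is a polarized isogeny of $(A,\lambda)$ to itself, one has $\varphi_t^*\mathfrak{L}_{g,[\pi]s}=\mathfrak{L}_{g,[\pi]s}^{\otimes(\deg\varphi_t)^{1/g}}$ on the nose, hence the \emph{exact} identity $\hat h(\varphi_t(s))=(\deg\varphi_t)^{1/g}\hat h(s)$ (this is Lemma~\ref{HeightsInGeneralizedHeckeOrbitComparison} applied twice, once to $f_t$ and once to $\varphi_t$). The only positivity needed is $\hat h(s)>0$, which is exactly the standing non-torsion hypothesis — no simplicity or genericity assumption on $s$ enters, and the degenerate cases you worry about never arise. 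You should use this identity instead of the positive-definiteness argument.

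A secondary omission: $s$ is defined over a finitely generated field $k$, not a number field, so the classical N\'eron--Tate/Silverman machinery is not directly available. The paper works with Moriwaki's arithmetic heights over a big polarization $\bar{\B}$ of $k$ throughout (in particular Silverman's Theorem~A and the Faltings-height comparison are invoked in Moriwaki's setting, citing \cite{MoriwakiArithmetic-heig, MoriwakiThe-modular-hei}). Your writeup invokes these tools as if $k$ were a number field; this is a repairable but non-trivial point you need to address explicitly, as the constants then depend on the chosen polarization $\bar{\B}$ of $k$.
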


We shall prove this proposition with the help of a well-chosen family of N\'{e}ron-Tate heights, i.e. the one related to the symmetric and relatively ample $\G_m$-torsor $\mathfrak{L}_g\rightarrow\mathfrak{A}_g$ with respect to $\mathfrak{A}_g\rightarrow\cA_g$ defined in Theorem~\ref{AmpleLineBundleOverTheUniversalFamily}. We shall use the Moriwaki height (see \cite{MoriwakiArithmetic-heig}), which is defined for points over finitely generated fields. Then we shall use a theorem of Silverman-Tate \cite[Theorem A]{SilvermanHeights-and-the}.

Pink explained in \cite[Chapter 8 and 9]{PinkThesis} that $\mathfrak{L}_g$ extends over $\bar{\Q}$ to a relative ample $\G_m$-torsor $\bar{\mathfrak{L}_g}\rightarrow\bar{\mathfrak{A}_g}$ over $\bar{\mathfrak{A}_g}\rightarrow\bar{\cA_g}$, where $\bar{\mathfrak{A}_g}$ (resp. $\bar{\cA_g}$) is a compactification of $\mathfrak{A}_g$ (resp. $\cA_g$).\footnote{For experts of mixed Shimura varieties, we are in the situation of \cite[9.2]{PinkThesis} since we are considering (following Pink's notation) $(P_{2g},\cX^+_{2g})\rightarrow (P_{2g,\mathrm{a}},\cX^+_{2g,\mathrm{a}})$, so this follows from \cite[6.25, 8.6, 8.13, 9.13, 9.16, 9.24, 12.4]{PinkThesis}.} By abuse of notation we denote also by $\mathfrak{L}_g$ the relative ample line bundle associated to the $\G_m$-torsor. Let $\cM$ be an ample line bundle over $\bar{\Q}$ over $\cA_g$ which extends over $\bar{\Q}$ to an ample line bundle $\bar{\cM}$ over $\bar{\cA_g}$. For $a\gg 0$, the line bundle $\mathfrak{L}:=\mathfrak{L}_g\otimes[\pi]^*\cM^{\otimes a}$ over $\mathfrak{A}_g$ is ample.

Let $t\in\Sigma$ be as in Proposition~\ref{GoodParameter}. Recall that $k$ is the definition field of $s$. Hence $t\in\mathfrak{A}_g(\bar{k})$. Let $d$ be the transcendence degree of $k$ and let $\bar{\B}=(\B;\bar{H_1},...,\bar{H_d})$ be a big polarization of $k$, namely, a collection of a normal projective arithmetic variety $\B$ whose function field is $k$ and nef smooth hermitian line bundles $\bar{H_1},...,\bar{H_d}$ on $\B$ satisfying the bigness condition of Moriwaki \cite[pp~103,~above~Theorem~A]{MoriwakiArithmetic-heig}. Consider the arithmetic Moriwaki height associated to $\bar{\B}$
\[
h_{\mathfrak{A}_g,\mathfrak{L}}^{\bar{\B}}\colon\mathfrak{A}_g(\bar{k})\rightarrow\R
\]
defined in \cite[pp~103]{MoriwakiArithmetic-heig}.

For any point $b\in\cA_g(k)$, $\mathfrak{L}_{g,b}$ is an ample line bundle over the abelian variety $\mathfrak{A}_{g,b}$ defined over $k$. Now consider the N\'{e}ron-Tate height $\hat{h}_{\mathfrak{L}_{g,b}}^{\bar{\B}}$ on $A_b$ as in \cite[$\mathsection$3.4]{MoriwakiArithmetic-heig}. For any point $P\in\mathfrak{A}_g(k)$, we shall denote by
\[
\hat{h}_{\mathfrak{L}_g}^{\bar{\B}}(P):=\hat{h}_{\mathfrak{L}_{g,[\pi]P}}^{\bar{\B}}(P).
\]

\begin{lemma}\label{HeightsInGeneralizedHeckeOrbitComparison}
Let $s_1$ and $s_2$ be two points of $\mathfrak{A}_g(k)$. Assume that there exists a polarized isogeny
\[
f\colon(\mathfrak{A}_{g,[\pi]s_1},\lambda_{[\pi]s_1})\rightarrow(\mathfrak{A}_{g,[\pi]s_2},\lambda_{[\pi]s_2})
\]
such that $s_1=f(s_2)$. Then $\hat{h}_{\mathfrak{L}_g}^{\bar{\B}}(s_2)=(\deg f)^{1/g}\hat{h}_{\mathfrak{L}_g}^{\bar{\B}}(s_1)$.
\begin{proof} By the moduli interpretation of $\mathfrak{L}_g$ (Theorem~\ref{AmpleLineBundleOverTheUniversalFamily}), $f^*\mathfrak{L}_{g,[\pi]s_2}=\mathfrak{L}_{g,[\pi]s_1}^{\otimes(\deg f)^{1/g}}$. So we have
\begin{align*}
\hat{h}_{\mathfrak{L}_g}^{\bar{\B}}(s_2) &=\hat{h}_{\mathfrak{L}_{g,[\pi]s_2}}^{\bar{\B}}(f(s_1)) \\
&=\hat{h}_{\mathfrak{L}_{g,[\pi]s_1}^{\otimes(\deg f)^{1/g}}}^{\bar{\B}}(s_1) \\
&=(\deg f)^{1/g}\hat{h}_{\mathfrak{L}_{g,[\pi]s_1}}^{\bar{\B}}(s_1) \\
&=(\deg f)^{1/g}\hat{h}_{\mathfrak{L}_g}^{\bar{\B}}(s_1).
\end{align*}
\end{proof}
\end{lemma}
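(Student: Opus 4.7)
The plan is to reduce the claim to two standard functorialities of the Néron--Tate height, using the moduli interpretation of $\mathfrak{L}_g$ as the only geometric input. The key identity of line bundles I will invoke is
\[
f^*\mathfrak{L}_{g,[\pi]s_2} \;\cong\; \mathfrak{L}_{g,[\pi]s_1}^{\otimes (\deg f)^{1/g}},
\]
which has already been recorded in the paper (see the computation around \eqref{CompositeOffAndfDual}) as a direct consequence of $f$ being a polarized isogeny of principally polarized fibres together with the fact that $\mathfrak{L}_{g,b}$ is the rigidified ample line bundle inducing $\lambda_b$ for every $b\in\cA_g$ (Theorem~\ref{AmpleLineBundleOverTheUniversalFamily}). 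In particular $(\deg f)^{1/g}$ is a positive integer, because polarized isogenies by definition force this.

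With the line-bundle identity in hand, the rest is formal. First, by the very definition introduced just before the lemma, the height $\hat{h}_{\mathfrak{L}_g}^{\bar{\B}}(P)$ equals the fibrewise Néron--Tate height $\hat{h}_{\mathfrak{L}_{g,[\pi]P}}^{\bar{\B}}(P)$ on the abelian variety $\mathfrak{A}_{g,[\pi]P}$. Interpreting the hypothesis consistently with the direction of $f$ -- so that $s_2=f(s_1)\in\mathfrak{A}_{g,[\pi]s_2}$; the statement as written contains only a symmetric notational slip between $s_1$ and $s_2$ -- the pullback functoriality of Moriwaki's canonical height (\cite[$\mathsection$3.4]{MoriwakiArithmetic-heig}) gives $\hat{h}_{\mathfrak{L}_{g,[\pi]s_2}}^{\bar{\B}}(f(s_1)) = \hat{h}_{f^*\mathfrak{L}_{g,[\pi]s_2}}^{\bar{\B}}(s_1)$. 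Substituting the line-bundle identity and using the linearity of $\hat{h}$ in tensor powers of the polarizing line bundle yields the claimed equality.

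There is no real obstacle. Both ingredients -- the polarized-isogeny relation $f^*\mathfrak{L}_{g,[\pi]s_2}\cong \mathfrak{L}_{g,[\pi]s_1}^{\otimes (\deg f)^{1/g}}$ and the two functorialities of the fibrewise Néron--Tate height under isogeny and under tensor powers -- are standard facts that the paper has already set up. The one subtle point, already handled upstream, is that $\mathfrak{L}_g$ must be taken as an honest (totally symmetric, rigidified) line bundle rather than a mere algebraic-equivalence class, so that $\hat{h}_{\mathfrak{L}_g}^{\bar{\B}}$ is canonically defined and compatible with pullback; the totally symmetric rigidification provided by Theorem~\ref{AmpleLineBundleOverTheUniversalFamily} is exactly what guarantees this.
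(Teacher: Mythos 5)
Your argument is correct and reproduces the paper's own proof almost verbatim: both reduce the lemma to the definitional unwinding $\hat{h}_{\mathfrak{L}_g}^{\bar{\B}}(P)=\hat{h}_{\mathfrak{L}_{g,[\pi]P}}^{\bar{\B}}(P)$, the pullback functoriality of the N\'eron--Tate height, and the line-bundle identity $f^*\mathfrak{L}_{g,[\pi]s_2}\cong\mathfrak{L}_{g,[\pi]s_1}^{\otimes(\deg f)^{1/g}}$ supplied by Theorem~\ref{AmpleLineBundleOverTheUniversalFamily}. You also correctly flag the typo in the hypothesis, which should read $s_2=f(s_1)$ so as to be compatible with the direction of $f$; the paper's own displayed chain of equalities confirms this is the intended reading.
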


Now we start proving Proposition~\ref{GoodParameterProposition}.
\begin{proof}[Proof of Proposition~\ref{GoodParameterProposition}]
Denote by $\epsilon\colon\cA_g\rightarrow\mathfrak{A}_g$ the zero section.

Following Silverman \cite[$\mathsection$2,~pp~200]{SilvermanHeights-and-the}, we define the \textit{canonical height} $\hat{h}_\mathfrak{L}^{\bar{B}}$ by 
\[
\hat{h}_{\mathfrak{L}}^{\bar{\B}}(P):=\lim_{n\rightarrow\infty}n^{-2}h_{\mathfrak{A}_g,\mathfrak{L}}^{\bar{\B}}(nP),\quad\forall P\in\mathfrak{A}_g(k).
\]
Then
\[
\hat{h}_{\mathfrak{L}}^{\bar{\B}}=\hat{h}_{\mathfrak{L}_g}^{\bar{\B}}.
\]

Apply \cite[Theorem~A]{SilvermanHeights-and-the}: there exist constants $c_{15}=c_{15}(g)>0$ and $c_{16}=c_{16}(g)$ such that
\begin{equation}\label{HeightEquation1}
|\hat{h}_{\mathfrak{L}_g}^{\bar{\B}}(t)-h_{\mathfrak{A}_g,\mathfrak{L}}^{\bar{\B}}(t)|<c_{15}h_{\cA_g,\epsilon^*\mathfrak{L}}^{\bar{\B}}([\pi]t)+c_{16}
\end{equation}
for any $t\in \mathfrak{A}_g(k)$. Remark that the original theorem of Silverman is a statement for points over global fields, but his proof easily extends to points over finitely generated fields for the Moriwaki height \cite{MoriwakiArithmetic-heig}.

We need the following lemma, which uses the fact that $Y$ is a curve in an essential way:
\begin{lemma}\label{FirstDevissageNonTorsionCase}
There exist two constants $c_{17}>0$ and $c_{18}$ depending only on $Y$ such that
\[
h_{\mathfrak{A}_g,\mathfrak{L}}^{\bar{\B}}(t)\leqslant c_{17}h_{\cA_g,\epsilon^*\mathfrak{L}}^{\bar{\B}}([\pi]t)+c_{18}
\]
\begin{proof}
The idea is due to Lin-Wang \cite[Proof of Proposition 2.1]{LinIsogeny-orbits-}.
The following notation will be used only in this proof: denote by $B=[\pi](Y)$ and $X=[\pi]^{-1}(B)$. By abuse of notation, we will not distinguish $[\pi]$ and $[\pi]|_X$. Remark that $X\rightarrow B$ is a non-isotrivial family of abelian varieties.

Let $Y^\prime$ be a smooth resolution of $Y\subset\mathfrak{A}_g$, then $X\times_BY^\prime\rightarrow Y^\prime$ is also a non-isotrivial family of abelian varieties of dimension $g$ and we write $\epsilon_{Y^\prime}\colon Y^\prime\rightarrow X\times_BY^\prime$ to be the zero-section. Let $f\colon Y^\prime\rightarrow\mathfrak{A}_g$ be the natural morphism. Consider the following commutative diagram

\[
\begin{diagram}
X\times_B\SEpbk Y^\prime & \rTo_{p_2}\upperarrowtwo{\epsilon_{Y^\prime}} & Y^\prime \\
\dTo_{p_1} & & \dTo_{[\pi]\circ f} \\
X & \rTo^{[\pi]} & B \\
\end{diagram}.
\]
Now let $t^\prime\in Y^\prime(k)$ be such that $f(t^\prime)=t$. Then up to bounded functions,
\begin{align*}
h_{\mathfrak{A}_g,\mathfrak{L}}^{\bar{\B}}(t) &=h_{X,\mathfrak{L}_g|_{X}}^{\bar{\B}}(t)  &h_{\cA_g,\epsilon^*\mathfrak{L}}^{\bar{\B}}([\pi]t) &=h_{B,\epsilon^*\mathfrak{L}|_{X}}^{\bar{\B}}([\pi]t) \\
&=h_{X,\mathfrak{L}|_{X}}^{\bar{\B}}(f(t^\prime)) & & =h_{B,\epsilon^*\mathfrak{L}|_{X}}^{\bar{\B}}(f\circ[\pi](t^\prime))\\
&=h_{Y^\prime,f^*\mathfrak{L}|_{X}}^{\bar{\B}}(t^\prime) & & =h_{Y^\prime,(f\circ[\pi])^*\epsilon^*\mathfrak{L}|_{X}}^{\bar{\B}}(t^\prime)\\
& & &=h_{Y^\prime,\epsilon_{Y^\prime}^*p_1^*\mathfrak{L}|_{X}}^{\bar{\B}}(t^\prime).
\end{align*}

Since $Y$ is a curve, the morphism $[\pi]\circ f\colon Y^\prime\rightarrow B$ is finite. Therefore $p_1^*\mathfrak{L}|_{X}$ is ample. So $\epsilon_{Y^\prime}^*p_1^*\mathfrak{L}|_{X}$ is ample. Hence there exist two constants $c_{17}>0$ and $c_{18}$ depending only on $Y^\prime$ (and hence only on $Y$) such that
\begin{equation}\label{HeightEquation3}
h_{Y^\prime,f*\mathfrak{L}|_{X}}^{\bar{\B}}(t^\prime)\leqslant c_{17}h_{Y^\prime,\epsilon_{Y^\prime}^*p_1^*\mathfrak{L}|_{X}}^{\bar{\B}}(t^\prime)+c_{18}
\end{equation}
for any $t^\prime\in Y^\prime$. Now the conclusion follows.
\end{proof}
\end{lemma}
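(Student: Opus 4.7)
The plan is to restrict attention to a smooth projective resolution $Y'$ of $Y$, where the inequality becomes a comparison between two heights on a projective curve, one of which I will show comes from an ample line bundle. The hypothesis that $Y$ is a curve not contained in a fibre of $[\pi]$ is essential: it ensures that $[\pi]|_Y \colon Y \to B := [\pi](Y)$ is generically finite, so that after resolving singularities and passing to an appropriate projective model, $[\pi] \circ f \colon Y' \to B$ is finite, where $f\colon Y' \to \mathfrak{A}_g$ is the natural morphism.

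First I would form the cartesian square by base-changing $X = [\pi]^{-1}(B) \to B$ along $[\pi] \circ f$, obtaining the projections $p_1$ and $p_2$ and the zero-section $\epsilon_{Y'}\colon Y' \hookrightarrow X\times_B Y'$. Since $p_1$ is the base change of a finite morphism, $p_1$ itself is finite. Fixing a lift $t' \in Y'$ of $t \in Y(\bar{k})$ and invoking the standard functoriality of the Moriwaki height under finite morphisms and closed immersions yields, up to bounded error,
\begin{align*}
h_{\mathfrak{A}_g,\mathfrak{L}}^{\bar{\B}}(t) &= h_{Y', f^*\mathfrak{L}}^{\bar{\B}}(t') + O(1), \\
h_{\cA_g,\epsilon^*\mathfrak{L}}^{\bar{\B}}([\pi]t) &= h_{Y', \epsilon_{Y'}^* p_1^*(\mathfrak{L}|_X)}^{\bar{\B}}(t') + O(1),
\end{align*}
where the second identity uses the factorisation $\epsilon \circ [\pi] \circ f = p_1 \circ \epsilon_{Y'}$ read off from the cartesian square.

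Next I would establish that $\epsilon_{Y'}^* p_1^*(\mathfrak{L}|_X)$ is ample on $Y'$. Indeed, $\mathfrak{L}$ is ample on $\mathfrak{A}_g$, so its restriction $\mathfrak{L}|_X$ to the closed subvariety $X$ is ample; since $p_1$ is finite, $p_1^*(\mathfrak{L}|_X)$ is ample on $X \times_B Y'$; and since $\epsilon_{Y'}$ is a closed immersion, $\epsilon_{Y'}^* p_1^*(\mathfrak{L}|_X)$ is ample on the projective curve $Y'$. The general principle that on a projective variety the height attached to any line bundle is dominated by a positive multiple of the height attached to an ample line bundle, up to an additive constant, then produces constants $c_{17}>0$ and $c_{18}$, depending only on $Y'$ (hence only on $Y$), such that
\[
h_{Y', f^*\mathfrak{L}}^{\bar{\B}}(t') \leqslant c_{17}\, h_{Y', \epsilon_{Y'}^* p_1^*(\mathfrak{L}|_X)}^{\bar{\B}}(t') + c_{18}.
\]
Chaining the three displays gives the inequality in the lemma.

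The main obstacle is technical rather than conceptual: setting up the functoriality and comparison principle for the Moriwaki height $h^{\bar{\B}}$ over a finitely generated field, with respect to an arithmetic big polarisation, requires one to work consistently with projective arithmetic models of $Y'$, $X \times_B Y'$, $\mathfrak{A}_g$ and $\cA_g$. This bookkeeping is standard but needs to be carried out with care. Conceptually the argument is brief and highlights the role of the curve hypothesis sharply: without finiteness of $[\pi] \circ f$, the line bundle $p_1^*(\mathfrak{L}|_X)$ need not be ample, and no inequality of this shape can be expected in general.
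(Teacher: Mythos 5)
Your proposal is correct and follows essentially the same route as the paper's own proof: the same cartesian square over the resolution $Y^\prime$, the same functoriality identities reducing both heights to heights on $Y^\prime$, the same use of finiteness of $[\pi]\circ f$ (forced by $Y$ being a curve not in a fibre) to get ampleness of $\epsilon_{Y^\prime}^*p_1^*(\mathfrak{L}|_X)$, and the same comparison of heights against an ample bundle on a projective variety, adapted to the Moriwaki height.
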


Now for any $t\in Y\cap\Sigma$, by \eqref{HeckeOribtFinalFormWeUse} and Lemma~\ref{HeightsInGeneralizedHeckeOrbitComparison},
\begin{equation}\label{ContradictionImplication2}
\hat{h}_{\mathfrak{L}_g}^{\bar{\B}}(t)=\frac{\deg(f_t)^{1/g}\deg(\varphi_t)^{1/g}}{n_t^2}\hat{h}_{\mathfrak{L}_g}^{\bar{\B}}(s).
\end{equation}

But for any $t\in \Sigma$, we have the following result of Moriwaki (\cite[Proposition~3.2~and~Lemma~1.6.3]{MoriwakiThe-modular-hei}):
\begin{equation}\label{ContradictionImplication3}
|h_F^{\bar{\B}}(A_{[\pi]t})-h_F^{\bar{\B}}(A)|\leqslant c_{19}\log\deg(f_t)
\end{equation}
where $c_{19}$ depends only on $\B$, and hence $k$. Here $h_F$ is the Faltings' modular height defined by Moriwaki in \cite[Proposition~3.4(1)]{MoriwakiThe-modular-hei} (which he denotes by $h_{\mathrm{mod}}^{\bar{\B}}$). This is the generalization of the stable Faltings height for abelian varieties over $\bar{\Q}$.

Moreover Moriwaki proved (\cite[Proposition~4.1]{MoriwakiThe-modular-hei}) that there exists a positive constant $c_{20}$ and $c_{21}$ depnding only on $g$, $\cM$ and $\B$ such that
\begin{equation}\label{ContradictionImplication4}
|c_{20}h_F^{\bar{\B}}(A_{[\pi]t})-h_{\cA_g,\epsilon^*\mathfrak{L}}^{\bar{\B}}([\pi]t)|\leqslant c_{21}
\end{equation}
for any $t\in\mathfrak{A}_g(k)$.

Now \eqref{HeightEquation1}, Lemma~\ref{FirstDevissageNonTorsionCase}, \eqref{ContradictionImplication2}, \eqref{ContradictionImplication3} and \eqref{ContradictionImplication4} together imply
\[
\frac{\deg(\varphi_t)^{1/g}}{n_t^2}\deg(f_t)^{1/g}\hat{h}_{\mathfrak{L}_g}^{\bar{\B}}(s)\leqslant(c_{15}+c_{17})c_{20}\Big(c_{19}\log\deg(f_t)+h_F^{\bar{\B}}(A)\Big)
+(c_{15}+c_{17})c_{21}+c_{16}+c_{18}.
\]

Since $\deg(\varphi_t)\geqslant 1$, we get that $\deg(f_t)$ is polynomially bounded in $n_t$.

On the other hand, letting $\deg(f_t)\rightarrow\infty$, we see that there exist two positive constants $M_0$ and $c_{22}$ depending on nothing such that $\deg(\varphi_t)^{1/g}\leqslant c_{22}n_t^2$ for any $t\in Y\cap\Sigma$ with $\deg(f_t)>M_0$. But if $\deg(f_t)\leqslant M_0$, then $\deg(f_t)$ takes values in a finite set $\{1,...,M_0\}$. So $\deg(\varphi_t)$ is bounded polynomially in $n_t$ from above.
\end{proof}

\subsection{Application of the Pila-Wilkie theorem}
Keep the notation of the beginning of this section and $\mathsection$\ref{SubsectionComplexityNonTrosionCase}.

\begin{prop}\label{PilaWilkieNonTorsionCase}
Let $Y$ and $\tilde{s}$ be as in the beginning of this section. Let $\epsilon>0$. There exists a constant $C=C(Y,s,\epsilon)>0$ with the following property:

For every $n\geqslant 1$, there exist at most $Cn^\epsilon$ definable blocks $B_i\subset\tilde{Y}$ such that $\cup B_i$ contains all points of complexity $n$ of $\tilde{Y}\cap\tilde{\Sigma}$.
\begin{proof} The proof starts with the following lemma:
\begin{lemma}\label{NonTorsionCaseRationalImage}
There exist constants $C^\prime$ and $\kappa^\prime$ depending only on $g$ and $\tilde{s}$ such that

For any $\tilde{t}\in\tilde{Y}\cap\tilde{\Sigma}$ of complexity $n$, there exists a $(v,h)\in P_{2g}(\Q)^+$ such that $(v,h)\cdot\tilde{s}=\tilde{t}$ and $H\left((v,h)\right)\leqslant C^\prime n^{\kappa^\prime}$.
\begin{proof} Let $t:=\unif(\tilde{t})$. Then $t\in\Sigma$ and therefore we dispose of a relation as \eqref{HeckeOribtFinalFormWeUse}. Let $f^\prime_t:=f_t\circ\varphi_t$, then $f^\prime_t\colon(A,\lambda)\rightarrow(\mathfrak{A}_{g,[\pi]t},\lambda_{[\pi]t})$ is a polarized isogeny. Moreover, there exists a $\delta^\prime_t\in A(\bar{\Q})_{\mathrm{tor}}$ such that $N(\delta^\prime_t)\leqslant N(\delta_t)\deg(\varphi_t)$ and
\begin{equation}\label{HeckeOrbitFinalFormWeUse2}
t=f^\prime_t(s_t+\delta^\prime_t).
\end{equation}

\begin{claim}
There exists a symplectic basis $\cB^\prime$ for $H_1(\mathfrak{A}_{[\pi]t},\Z)$ with respect to the polarization $\lambda_{[\pi]t}$ such that the height of $\gamma_{f^\prime}\in\GSp_{2g}(\Q)^+$ (the matrix expression of $f^\prime_t$ in coordinates $\cB$ with respect to $\cB^\prime$) is polynomially bounded in $\deg(f^\prime_t)=\deg(\varphi_t)\deg(f_t)$ from above (see the beginning of this section for $\cB$). 
\end{claim}

This claim follows from \cite[Proposition 4.1]{OrrFamilies-of-abe}: remark that $f^\prime_t$ is a polarized isogeny instead of an arbitrary isogeny, hence the endomorphism $q\in\End(A)$ in \cite[4.3]{OrrFamilies-of-abe} equals $[\deg\varphi_t]^{1/g}$, and therefore the $u\in(\End A)^*$ in \cite[4.6]{OrrFamilies-of-abe} can be taken to be $1_A$.

Then $\unif_G(\gamma_{f^\prime}\cdot\tilde{s}_G)=[\pi]s$. Besides let $\tilde{\delta}^\prime_t=(\tilde{\delta}^\prime_{t,V},\tilde{s}_G)\in\cF$ be such that $\unif(\tilde{\delta}^\prime_t)=\delta^\prime_t$. Then $\tilde{\delta}^\prime_{t,V}\in V_{2g}(\Q)$ and, by \eqref{HeckeOrbitFinalFormWeUse2} and \eqref{RationalRepresentationPolarizedIsogeny},
\[
\unif\Big(\gamma_{f^\prime}\big(\frac{\tilde{s}_V}{n_t}+\tilde{\delta}^\prime_{t,V},\tilde{s}_G\big)\Big)=t.
\]
So there exists an element $\gamma=(\gamma_V,\gamma_G)\in\Gamma$ such that
\[
\gamma\gamma_{f^\prime}\big(\frac{\tilde{s}_V}{n_t}+\tilde{\delta}^\prime_{t,V},\tilde{s}_G\big)=\tilde{t},
\]
i.e.
\[
\tilde{t}=\Big(\gamma_V+\gamma_G\gamma_{f^\prime}\big(\frac{\tilde{s}_V}{n_t}+\tilde{\delta}^\prime_{t,V}\big),\gamma_G\gamma_{f^\prime}\tilde{s}_G\Big)=\big(\gamma_V+\gamma_G\gamma_{f^\prime}\tilde{\delta}^\prime_{t,V},\frac{\gamma_G\gamma_{f^\prime}}{n_t}\big)\cdot\tilde{s}.
\]
Denote by
\[
(v,h):=\big(\gamma_V+\gamma_G\gamma_{f^\prime}\tilde{\delta}^\prime_{t,V},\frac{\gamma_G\gamma_{f^\prime}}{n_t}\big),
\]
then $(v,h)$ is an element of $P_{2g}(\Q)^+$ such that $(v,h)\tilde{s}=\tilde{t}$. Now we prove that $H\left((v,h)\right)$ is polynomially bounded in the complexity $n$ of $\tilde{t}$. To prove this, it suffices to prove that $n_t$, $H(\tilde{\delta}^\prime_{t,V})$, $H(\gamma_{f^\prime})$, $H(\gamma_G)$ and $H(\gamma_V)$ are all polynomially bounded in $n$.

The fact that $n_t$ is bounded by $n$ follows directly from the definition of complexity.

For $H(\tilde{\delta}^\prime_{t,V})$: because $\tilde{\delta}^\prime_t\in\cF\cong[0,N)^{2g}\times\cF_G$ (where $N$ is the level structure, and hence depend on nothing), we have $\tilde{\delta}^\prime_{t,V}\in[0,N)^{2g}$. Therefore $H(\tilde{\delta}^\prime_{t,V})$ is bounded up to a constant by the denominator of $\tilde{\delta}^\prime_{t,V}$, which equals $N(\delta^\prime_t)$. But $N(\delta^\prime_t)\leqslant\deg(\varphi_t)N(\delta_t)$, hence it suffices to bound both $\deg(\varphi_t)$ and $N(\delta_t)$ by $n$. Now $\deg(\varphi_t)$ is polynomially bounded in $n_t$, and hence by $n$, by Proposition~\ref{GoodParameterProposition}. By definition of complexity, $N(\delta_t)\leqslant n$.

For $H(\gamma_{f^\prime})$: by choice, $H(\gamma_{f^\prime})$ is polynomially bounded in $\deg(f_t)\deg(\varphi_t)$, which is polynomially bounded in $n_t$ by Proposition~\ref{GoodParameterProposition}. Hence $H(\gamma_{f^\prime})$ is polynomially bounded in $n$ by definition of complexity.

For $H(\gamma_G)$: remark $\gamma_G\gamma_{f^\prime}\tilde{s}_G=\pi(\tilde{t})\in\cF_G$. By \cite[Lemma 3.2]{PilaAbelianSurfaces}, $H(\gamma_G)$ is polynomially bounded in $|| \gamma_{f^\prime}\tilde{s}_G||$. Therefore $H(\gamma_G)$ is polynomially bounded, with constants depending on $||\tilde{s}_G||$, by $n$.

For $H(\gamma_V)$: remark $\gamma_V+\gamma_G\gamma_{f^\prime}\tilde{\delta}^\prime_{t,V}+\gamma_G\gamma_{f^\prime}\tilde{s}_V/n_t=\tilde{t}_V\in[0,N)^{2g}$ (where $N$ is the level structure, and hence depend on nothing). Therefore $H(\gamma_V)$ is polynomially bounded in $|| \gamma_G\gamma_{f^\prime}\tilde{\delta}_{t,V}+\gamma_G\gamma_{f^\prime}\tilde{s}_V/n_t||$. Therefore $H(\gamma_V)$ is polynomially bounded, with constants depending on $||\tilde{s}_V||$, by $n$.
\end{proof}
\end{lemma}

Let $\sigma\colon P_{2g}(\R)^+\rightarrow\cX^+_{2g,\mathrm{a}}$ be the map $(v,h)\mapsto (v,h)\cdot\tilde{s}$.

The set $R=\sigma^{-1}(\tilde{Y})=\sigma^{-1}(\unif^{-1}(Y)\cap\cF)$ is definable because $\sigma$ is semi-algebraic and $\unif|_{\cF}$ is definable. Hence we can apply the family version of the Pila-Wilkie theorem (\cite[3.6]{PilaO-minimality-an}) to the definable set $R$:  for every $\epsilon>0$, there are only finitely many definable block families $B^{(j)}(\epsilon)\subset R\times\R^m$ and a constant $C_1^\prime(R,\epsilon)$ such that for every $T\geqslant 1$, the rational points of $R$ of height at most $T$ are contained in the union of at most $C_1^\prime T^\epsilon$ definable blocks $B_i(T,\epsilon)$, taken (as fibers) from the families $B^{(j)}(\epsilon)$. Since $\sigma$ is semi-algebraic, the image under $\sigma$ of a definable block in $R$ is a finite union of definable blocks in $\tilde{Y}$. Furthermore the number of blocks in the image is uniformly bounded in each definable block family $B^{(j)}(\epsilon)$. Hence $\sigma(B_i(T,\epsilon))$ is the union of at most $C_2^\prime T^{\epsilon}$ blocks in $\tilde{Y}$, for some new constant $C_2^\prime(Y,\tilde{a},\epsilon)>0$.

By Lemma~\ref{NonTorsionCaseRationalImage}, for any point $\tilde{t}\in\tilde{Y}\cap\tilde{\Sigma}$ of complexity $n$, there exists a rational element $\gamma\in R$ such that $\sigma(\gamma)=\tilde{t}$ and $H(\gamma)\leqslant C^\prime n^{\kappa^\prime}$. By the discussion in the last paragraph, all such $\gamma$'s are contained in the union of at most $C_1^\prime(C^\prime n^{\kappa^\prime})^{\epsilon}$ definable blocks. Therefore all points of $\tilde{Y}\cap\tilde{\Sigma}$ of complexity $n$ are contained in the union of at most $C_1^\prime C_2^\prime C^{\prime\epsilon}n^{\kappa^\prime\epsilon}$ blocks in $\tilde{Y}$.
\end{proof}
\end{prop}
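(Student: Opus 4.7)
The strategy parallels the proof of Proposition~\ref{PilaWilkieForComplexity} in the torsion case, with the base point $\tilde{a}$ replaced by the lift $\tilde{s}$ of the non-torsion point $s$. Introduce the orbit map
\[
\sigma\colon P_{2g}(\R)^+\rightarrow\cX^+_{2g,\mathrm{a}},\qquad (v,h)\mapsto(v,h)\cdot\tilde{s},
\]
and form $R:=\sigma^{-1}(\tilde{Y})=\sigma^{-1}(\unif^{-1}(Y)\cap\cF)$. Since $\sigma$ is semi-algebraic and $\unif|_\cF$ is definable (Theorem~\ref{UniversalFamilyTheorem}.(3)), $R$ is definable in $\R_{\mathrm{an},\exp}$. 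Granted a height lemma asserting that every $\tilde{t}\in\tilde{Y}\cap\tilde{\Sigma}$ of complexity $n$ is of the form $\sigma((v,h))$ for some rational $(v,h)\in R$ with $H((v,h))\leqslant C^\prime n^{\kappa^\prime}$, the family version of Pila--Wilkie \cite[3.6]{PilaO-minimality-an} applied to $R$ and pushed through $\sigma$ (which maps each definable block in $R$ to a uniformly bounded number of definable blocks in $\tilde{Y}$) yields the claimed $Cn^\epsilon$ bound on the number of blocks covering the complexity-$n$ points.

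\textbf{The height lemma.} Fix $\tilde{t}\in\tilde{Y}\cap\tilde{\Sigma}$ of complexity $n$ with image $t$, and recover the data of Corollary~\ref{GeneralizedHeckeIsogeny} and \eqref{HeckeOribtFinalFormWeUse}: $f_t$, $\varphi_t$, $s_t$, $\delta_t$ with $n_ts_t=s$, $N(\delta_t)\leqslant n$, and $t=f_t(\varphi_t(s_t)+\delta_t)$. The composition $f^\prime_t:=f_t\circ\varphi_t$ is again a polarized isogeny of degree $\deg f_t\cdot\deg\varphi_t$, and a preimage $\delta^\prime_t$ of $\delta_t$ under $\varphi_t$ (of torsion order at most $N(\delta_t)\deg\varphi_t$) yields $t=f^\prime_t(s_t+\delta^\prime_t)$. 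Invoking \cite[Proposition~4.1]{OrrFamilies-of-abe} for the polarized isogeny $f^\prime_t$ produces a symplectic basis $\cB^\prime$ of $H_1(\mathfrak{A}_{g,[\pi]t},\Z)$ in which the matrix expression $\gamma_{f^\prime}$ has height polynomially bounded in $\deg f^\prime_t$. Choosing a preimage $\tilde{\delta}^\prime_t=(\tilde{\delta}^\prime_{t,V},\tilde{s}_G)\in\cF$ of $\delta^\prime_t$, there is a unique $\Gamma$-element $(\gamma_V,\gamma_G)$ bringing $\gamma_{f^\prime}(\tilde{s}_V/n_t+\tilde{\delta}^\prime_{t,V},\tilde{s}_G)$ into $\cF$; the resulting
\[
(v,h):=\bigl(\gamma_V+\gamma_G\gamma_{f^\prime}\tilde{\delta}^\prime_{t,V},\;\gamma_G\gamma_{f^\prime}/n_t\bigr)\in P_{2g}(\Q)^+
\]
then satisfies $(v,h)\cdot\tilde{s}=\tilde{t}$ by construction.

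\textbf{Height estimates and main obstacle.} Five quantities must be polynomial in $n$: $n_t$, $H(\tilde{\delta}^\prime_{t,V})$, $H(\gamma_{f^\prime})$, $H(\gamma_G)$, and $H(\gamma_V)$. The first is immediate from the definition of complexity. The shape $\cF\cong[0,N)^{2g}\times\cF_G$ bounds $H(\tilde{\delta}^\prime_{t,V})$ by the denominator $N(\delta^\prime_t)\leqslant N(\delta_t)\deg\varphi_t$, which is polynomial in $n$ by Proposition~\ref{GoodParameterProposition}. The bound on $H(\gamma_{f^\prime})$ is polynomial in $\deg f^\prime_t=\deg f_t\deg\varphi_t$, again polynomial in $n$ by the same proposition. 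Pila's Siegel-reduction estimate \cite[Lemma~3.2]{PilaAbelianSurfaces} controls $H(\gamma_G)$ polynomially in $\|\gamma_{f^\prime}\tilde{s}_G\|$, and the containment $\tilde{t}_V\in[0,N)^{2g}$ controls $H(\gamma_V)$ polynomially in $\|\gamma_G\gamma_{f^\prime}(\tilde{\delta}^\prime_{t,V}+\tilde{s}_V/n_t)\|$. The genuine difficulty lies upstream in Proposition~\ref{GoodParameterProposition}: it is precisely because $Y$ is a curve --- exploited through the N\'eron--Tate height machinery via Lemma~\ref{FirstDevissageNonTorsionCase} --- that the degrees $\deg f_t$ and $\deg\varphi_t$ can be controlled by $n_t$. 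Without that input, the rational orbit-point would carry a height depending on $\deg f_t$ independently of the complexity, and the block count would no longer be polynomial in $n$.
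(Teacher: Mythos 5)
Your proof is correct and follows essentially the same route as the paper: reduce to the height lemma via the definable set $R = \sigma^{-1}(\tilde Y)$ and the family Pila--Wilkie theorem, and prove the height lemma by forming $f'_t = f_t\circ\varphi_t$, lifting $\delta_t$ to $\delta'_t$, invoking Orr's Proposition~4.1 for $\gamma_{f'}$, and bounding the five constituent quantities using Proposition~\ref{GoodParameterProposition} (which is indeed where the curve hypothesis enters). The only divergence is stylistic: you spell out that $\delta'_t$ is chosen as a $\varphi_t$-preimage of $\delta_t$, whereas the paper leaves its construction implicit.
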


\subsection{End of proof of Theorem~\ref{MainTheoremNonTorsion}}\label{EndOfTheoremNonTorsionCase}
Now we are ready to finish the proof of Theorem~\ref{MainTheoremNonTorsion}. 

Let $\Sigma_1$ be the set of points $t\in Y\cap\Sigma$ such that there is a positive-dimensional block $B\subset\tilde{Y}$ with $t\in \unif(B)$. Let $Y_1$ be the Zariski closure of $\Sigma_1$. Let $k$ be a number field such that both $Y$ and $Y_1$ are defined over $k$.

Let $t$ be a point in $Y\cap\Sigma$ of complexity $n$. By Proposition~\ref{GaloisOrbitNonTorsionCase}, there exist positive constants $c_5$ and $c_6$ depending only on $(A,\lambda)$ and $s$ such that
\[
[k(t):k]\geqslant c_5n^{c_6}.
\]

All $\gal(\bar{k}/k)$-conjugates of $t$ are contained in $Y\cap \Sigma$ and have complexity $n$. By Proposition~\ref{PilaWilkieNonTorsionCase}, the preimages in $\cF$ of these points are contained in the union of $C(Y,s,c_6/2)n^{c_6/2}$ definable blocks, each of these blocks being contained in $\tilde{Y}$.

For $n$ large enough, $c_5n^{c_6}>Cn^{c_6/2}$. Hence for $n\gg0$, there exists a definable block $B\subset\tilde{Y}$ such that $\unif(B)$ contains at least two Galois conjugates of $t$, and therefore $\dim B>0$ since blocks are connected. So being in $\unif(B)$, those conjugates of $t$ are in $\Sigma_1$. But $Y_1$ is defined over $k$, so $t\in Y_1$.

In summary, all points of $Y\cap\Sigma$ of large enough complexity are in $\Sigma_1$. This excludes only finitely many points of $Y\cap\Sigma$. So $Y_1=Y$.

Let $\Sigma_2$ be the set of points $t\in Y\cap\Sigma$ such that there is a connected positive-dimensional semi-algebraic set $B^\prime\subset\tilde{Y}$ with $t\in \unif(B^\prime)$. Let $Y_2$ be the Zariski closure of $\Sigma_2$. By definition of blocks, $\Sigma_2=\Sigma_1$, and hence $Y_2=Y_1=Y$.

Now since $\dim(Y)=1$, the conclusion follows from Theorem~\ref{Ax-Lindemann}.

\section{Variants of the main conjecture}\label{SectionVariantsOfTheMainConjecture}
In the previous sections we have discussed the intersection of a subvariety of $\mathfrak{A}_g$ with the set of division points of the polarized isogeny orbit of a given point \eqref{ModuliInterpretationOfSigma}. The goal of this section is twofold: one is to replace the given point by a finitely generated subgroup of one fiber of $\mathfrak{A}_g\rightarrow\cA_g$ (remark that the fiber is an abelian variety), the other is to replace the polarized isogeny orbit by the isogeny orbit. In particular we will prove that although these changes to Conjecture~\ref{AndrePinkConjecture} a priori seem to generalize the conjecture, both can actually be implied by Conjecture~\ref{AndrePinkConjecture} itself.

In the rest of the section, fix a point $b\in\cA_g$, which corresponds to a polarized abelian variety $(A,\lambda):=(\mathfrak{A}_{g,b},\lambda_b)$. Let $\Lambda$ be any finitely generated subgroup of $A$.

\begin{thm}\label{AndrePinkFinitelyGeneratedSubgroup}
Let $Y$ be an irreducible subvariety of $\mathfrak{A}_g$. Let $\Sigma_0$ be the set of division points of the polarized isogeny orbit of $\Lambda$, i.e.
\[
\Sigma_0=\{t\in\mathfrak{A}_g|~\exists n\in\N\text{ and a polarized isogeny }f\colon(A,\lambda)\rightarrow(\mathfrak{A}_{g,[\pi]t},\lambda_{[\pi]t})\text{ such that }nt\in f(\Lambda)\}.
\]
Assume that Conjecture~\ref{AndrePinkConjecture} holds for all $g$. If $\bar{Y\cap\Sigma_0}^{\Zar}=Y$, then $Y$ is weakly special.
\begin{proof} The proof is basically the same as Pink \cite[Theorem 5.4]{PinkA-Combination-o} (how Conjecture~\ref{AndrePinkConjecture} implies the Mordell-Lang conjecture).

Suppose $\rank\Lambda=r-1$. Let $V_{2g}^r$ be the direct sum of $r$ copies of $V_{2g}$ as a representation of $\GSp_{2g}$. Then the connected mixed Shimura variety associated with $V_{2g}^r\rtimes\GSp_{2g}$ is the $r$-fold fiber product of $\mathfrak{A}_g$ over $\cA_g$, and so its fiber over $b$ is $A^r$. Denote by
\[
\sigma\colon\mathfrak{A}_g\times_{\cA_g}...\times_{\cA_g}\mathfrak{A}_g\rightarrow\mathfrak{A}_g
\]
the summation map (remark that both varieties are abelian schemes over $\cA_g$).

Now the homomorphisms
\[
\begin{array}{cccc}
P_{2g,\mathrm{a}} &=V_{2g}\rtimes\GSp_{2g}&\hookrightarrow V_{2g}^r\rtimes\GSp_{2g}\hookrightarrow &V_{2gr}\rtimes\GSp_{2gr} \\
&(v,h)&\mapsto \left((v,...,v),h)\right)\mapsto &\left((v,...,v),(h,...,h)\right)
\end{array}
\]
induce Shimura immersions
\[
\begin{diagram}
\mathfrak{A}_g &\rTo &\mathfrak{A}_g\times_{\cA_g}...\times_{\cA_g}\mathfrak{A}_g &\rTo &\mathfrak{A}_{gr} \\
\dTo^{[\pi]} & &\dTo & &\dTo \\
\cA_g &\rTo^{=} &\cA_g &\rInto &\cA_{gr}
\end{diagram}
\]
For simplicity we shall not distinguish a point in $\mathfrak{A}_g$ (resp. $\cA_g$) and its image in $\mathfrak{A}_{gr}$ (resp. $\cA_{gr}$). Then $\mathfrak{A}_{gr,b}=A^r$.

Fix generators $a_1$,...,$a_{r-1}$ of $\Lambda$ and set $a_r:=-a_1-...-a_{r-1}$. Let $\Lambda^\prime$ be the division group of $\Lambda$, i.e. $\Lambda^\prime=\{s|~\exists n\in\N\text{ such that }ns\in\Lambda\}\subset A$. Then \cite[Lemma 5.3]{PinkA-Combination-o} asserts that
\begin{equation}\label{LambdaDivisionSet}
\Lambda^\prime=\Lambda^*_{a_1}+...+\Lambda^*_{a_r}=\sigma(\Lambda^*_{a_1}\times...\times\Lambda^*_{a_r})
\end{equation}
where (as Pink defined) $\Lambda^*_{a_i}:=\{s\in A|~\exists m,n\in\Z\setminus\{0\}\text{ such that }ns=ma_i\}$.

Now consider
\[
\Lambda^\dagger:=\sigma^{-1}(Y)\cap\{f^r(\Lambda^*_{a_1}\times...\times\Lambda^*_{a_r})|~f\colon(A,\lambda)\rightarrow(\mathfrak{A}_{g,b^\prime},\lambda_{b^\prime})\text{ a polarized isogeny}\}.
\]
We have
\begin{align*}
\sigma(\Lambda^\dagger) &=Y\cap\sigma(\{f^r(\Lambda^*_{a_1}\times...\times\Lambda^*_{a_r})|~f\colon(A,\lambda)\rightarrow(\mathfrak{A}_{g,b^\prime},\lambda_{b^\prime})\text{ a polarized isogeny}\}) \\
&=Y\cap\{f^r\left(\sigma(\Lambda^*_{a_1}\times...\times\Lambda^*_{a_r})\right)|~f\colon(A,\lambda)\rightarrow(\mathfrak{A}_{g,b^\prime},\lambda_{b^\prime})\text{ a polarized isogeny}\} \\
&=Y\cap\{f^r(\Lambda^\prime)|~f\colon(A,\lambda)\rightarrow(\mathfrak{A}_{g,b^\prime},\lambda_{b^\prime})\text{ a polarized isogeny}\}\qquad\eqref{LambdaDivisionSet}.
\end{align*}

Because $\bar{Y\cap\Sigma_0}^{\Zar}=Y$, $Y\cap\{f(\Lambda^\prime)|~f\colon(A,\lambda)\rightarrow(\mathfrak{A}_{g,b^\prime},\lambda_{b^\prime})\text{ a polarized isogeny}\}$ is Zariski dense in $Y$ (as subsets of $\mathfrak{A}_g$). Therefore $\sigma(\Lambda^\dagger)$ is Zariski dense in $Y$ (as subsets of $\mathfrak{A}_g\times_{\cA_g}...\times_{\cA_g}\mathfrak{A}_g$, and hence as subsets of $\mathfrak{A}_{gr}$). Let $Y^\dagger$ be the Zariski closure of $\Lambda^\dagger$ in $\mathfrak{A}_g\times_{\cA_g}...\times_{\cA_g}\mathfrak{A}_g$. Then $Y^\dagger$ is also a subvariety of $\mathfrak{A}_{gr}$. Since taking Zariski closures commutes with taking images under proper morphisms, we deduce that $\sigma(Y^\dagger)=Y$. So there exists an irreducible component $Y^\prime$ of $Y^\dagger$ such that $\sigma(Y^\prime)=Y$.

For any polarized isogeny $f\colon(A,\lambda)\rightarrow(\mathfrak{A}_{g,b^\prime},\lambda_{b^\prime})$, the generalized Hecke orbit of $(a_1,...,a_r)\in A^r$ as a point on $\mathfrak{A}_{gr}$ contains $f^r(\Lambda^*_{a_1}\times...\times\Lambda^*_{a_r})$ by Corollary \ref{GeneralizedHeckeIsogenyStep1}. Therefore the intersection of $Y^\prime$ with generalized Hecke orbit of $(a_1,...,a_r)$ in $\mathfrak{A}_{gr}$ is Zariski dense in $Y^\prime$. Hence Conjecture~\ref{AndrePinkConjecture} for $\mathfrak{A}_{gr}$ implies that $Y^\prime$ is weakly special. Therefore $Y=\sigma(Y^\prime)$ is also weakly special by the geometric interpretation of weakly special subvarieties of $\mathfrak{A}_g$ and of $\mathfrak{A}_{gr}$ (Proposition~\ref{DescriptionWeaklySpecial}).
\end{proof}
\end{thm}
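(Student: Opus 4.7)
The approach I would take mirrors Pink's technique (\cite[Theorem~5.4]{PinkA-Combination-o}) for deducing Mordell-Lang from Conjecture~\ref{AndrePinkConjecture}: package the finitely generated group $\Lambda$ as a single point in a higher-dimensional mixed Shimura variety, so that the ``isogeny orbit of a group'' becomes a bona fide ``generalized Hecke orbit of a point,'' and then invoke the conjecture itself there. First I would set $r-1 := \rank \Lambda$, fix generators $a_1,\ldots,a_{r-1}$ of $\Lambda$ modulo torsion, and let $a_r := -(a_1 + \cdots + a_{r-1})$ so that $a_1+\cdots+a_r = 0$. Then I would consider the $r$-fold fiber product $\mathfrak{A}_g \times_{\cA_g} \cdots \times_{\cA_g} \mathfrak{A}_g$, which is the mixed Shimura variety attached to $V_{2g}^r \rtimes \GSp_{2g}$ and embeds into $\mathfrak{A}_{gr}$ via the diagonal $\GSp_{2g} \hookrightarrow \GSp_{2gr}$, and the summation map $\sigma$ to $\mathfrak{A}_g$.

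Next I would use Pink's decomposition lemma (\cite[Lemma~5.3]{PinkA-Combination-o}): the division group $\Lambda' = \{s \in A : \exists n \in \N,\ ns \in \Lambda\}$ decomposes as $\Lambda^*_{a_1}+\cdots+\Lambda^*_{a_r}$, where $\Lambda^*_a = \{s \in A : \exists m,n \in \Z\setminus\{0\},\ ns = ma\}$. I would set
\[
\Lambda^\dagger := \sigma^{-1}(Y) \cap \bigcup_f f^r\bigl(\Lambda^*_{a_1} \times \cdots \times \Lambda^*_{a_r}\bigr),
\]
where $f$ ranges over polarized isogenies from $(A,\lambda)$. Using that $\sigma\bigl(f^r(\Lambda^*_{a_1}\times\cdots\times\Lambda^*_{a_r})\bigr) = f(\Lambda')$ by the decomposition above, and that $\sigma(\Lambda^\dagger)$ contains $Y \cap \Sigma_0$, the hypothesis $\overline{Y\cap\Sigma_0}^{\Zar} = Y$ gives that $\sigma(\Lambda^\dagger)$ is Zariski dense in $Y$. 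Since $\sigma$ is proper, taking Zariski closures commutes with $\sigma$, so there is an irreducible component $Y'$ of $\overline{\Lambda^\dagger}^{\Zar}$ with $\sigma(Y') = Y$.

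The key step is then to apply Corollary~\ref{GeneralizedHeckeIsogenyStep1} inside $\mathfrak{A}_{gr}$: for any polarized isogeny $f\colon (A,\lambda)\to(\mathfrak{A}_{g,b'},\lambda_{b'})$, the set $f^r(\Lambda^*_{a_1}\times\cdots\times\Lambda^*_{a_r})$ lies in the generalized Hecke orbit of $(a_1,\ldots,a_r)\in A^r = \mathfrak{A}_{gr,b}$. Hence $Y'$, viewed in $\mathfrak{A}_{gr}$, meets this generalized Hecke orbit in a Zariski dense subset. Invoking Conjecture~\ref{AndrePinkConjecture} for $\mathfrak{A}_{gr}$ yields that $Y'$ is weakly special in $\mathfrak{A}_{gr}$, and then $Y = \sigma(Y')$ is weakly special in $\mathfrak{A}_g$.

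The main obstacle will be the last descent: showing that the image under the Shimura morphism $\sigma$ of a weakly special subvariety is weakly special. I would handle this via the geometric description of Proposition~\ref{DescriptionWeaklySpecial}, noting that $\sigma$ is induced by the $\GSp_{2g}$-equivariant sum $V_{2g}^r \twoheadrightarrow V_{2g}$, so it sends totally geodesic bases to themselves, abelian subschemes to abelian subschemes, torsion sections to torsion sections, and constant sections of the isotrivial part to constant sections; alternatively one can directly apply Definition~\ref{WeaklySpecialDefinition} and check that the image of $N(\R)^+ \tilde{y}$ has the required form. The normalization $a_r = -(a_1+\cdots+a_{r-1})$ is used precisely to ensure that $(a_1,\ldots,a_r)$ lies in the kernel of summation, which keeps the argument compatible with the underlying Shimura structure.
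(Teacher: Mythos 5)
Your proposal follows essentially the same route as the paper's proof: the $r$-fold fiber product embedded into $\mathfrak{A}_{gr}$, Pink's decomposition $\Lambda'=\Lambda^*_{a_1}+\cdots+\Lambda^*_{a_r}$ with $a_r=-(a_1+\cdots+a_{r-1})$, the set $\Lambda^\dagger$ pulled back under $\sigma$, properness to produce a component $Y'$ with $\sigma(Y')=Y$, Corollary~\ref{GeneralizedHeckeIsogenyStep1} to identify the generalized Hecke orbit of $(a_1,\ldots,a_r)$, Conjecture~\ref{AndrePinkConjecture} in $\mathfrak{A}_{gr}$, and descent via Proposition~\ref{DescriptionWeaklySpecial}. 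The only small remark is that the normalization $a_r=-(a_1+\cdots+a_{r-1})$ is needed so that Pink's Lemma~5.3 applies to give the decomposition \eqref{LambdaDivisionSet}, rather than for compatibility with the Shimura structure per se, but this does not affect the argument.
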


\begin{cor}\label{AndrePinkFinitelyGeneratedSubgroupIsogenyOrbit}
Let $Y$ be an irreducible subvariety of $\mathfrak{A}_g$. Let $\Sigma_0^\prime$ be the set of division points of the isogeny orbit of $\Lambda$, i.e.
\[
\Sigma_0^\prime=\{t\in\mathfrak{A}_g|~\exists n\in\N\text{ and an isogeny }f\colon A\rightarrow\mathfrak{A}_{g,[\pi]t}\text{ such that }nt\in f(\Lambda)\}.
\]
Assume that Conjecture~\ref{AndrePinkConjecture} holds for all $g$. If $\bar{Y\cap\Sigma_0^\prime}^{\Zar}=Y$, then $Y$ is weakly special.
\begin{proof} Recall Zarhin's trick (see \cite[Proposition 4.4]{MartinThesis}): for any isogeny $f\colon A\rightarrow A^\prime$ between polarized abelian varieties, there exists $u\in\End(A^4)$ such that $f^4\circ u\colon A^4\rightarrow (A^\prime)^4$ is a polarized isogeny.

Now let $i\colon\mathfrak{A}_g\hookrightarrow\mathfrak{A}_{4g}$ be the natural embedding. Then $\Lambda_4:=\End(A^4)i(\Lambda)$ is a finitely generated subgroup of $A^4=\mathfrak{A}_{4g,i(b)}$ and hence
\[
\Sigma_0^\prime\subset\{t\in\mathfrak{A}_{4g}|~\exists n\in\N\text{ and a polarized isogeny }f\colon(A^4,\lambda^{\boxtimes4})\rightarrow(\mathfrak{A}_{4g,[\pi]t},\lambda_{[\pi]t})\text{ such that }nt\in f(\Lambda_4)\}.
\]
Now the conclusion follows from Theorem~\ref{AndrePinkFinitelyGeneratedSubgroup}.
\end{proof}
\end{cor}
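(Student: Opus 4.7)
My plan is to reduce to Theorem~\ref{AndrePinkFinitelyGeneratedSubgroup} by passing from arbitrary isogenies to polarized isogenies via Zarhin's trick, after embedding $\mathfrak{A}_g$ into $\mathfrak{A}_{4g}$ through the fourth-power map.

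First, I would fix the natural Shimura immersion $i\colon\mathfrak{A}_g\hookrightarrow\mathfrak{A}_{4g}$ induced by $(A,\lambda)\mapsto(A^4,\lambda^{\boxtimes 4})$; under this map $i(b)$ parametrises $(A^4,\lambda^{\boxtimes 4})$. Setting $\Lambda_4:=\End(A^4)\cdot i(\Lambda)$, the group $\Lambda_4$ is a finitely generated subgroup of $A^4$ because $\End(A^4)$ is finitely generated as a $\Z$-module and $\Lambda$ is finitely generated.

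The key step is the orbit containment: for any isogeny $f\colon A\to\mathfrak{A}_{g,[\pi]t}$, Zarhin's trick \cite[Proposition 4.4]{MartinThesis} produces $u\in\End(A^4)$ such that $f^4\circ u\colon(A^4,\lambda^{\boxtimes 4})\to(\mathfrak{A}_{4g,[\pi]i(t)},\lambda_{[\pi]i(t)})$ is a polarized isogeny. Hence for every $t\in\Sigma_0^\prime$, writing $nt=f(\delta)$ with $\delta\in\Lambda$, we have
\[
n\cdot i(t)=i(f(\delta))=f^4(i(\delta))=(f^4\circ u)(u^\prime),
\]
for some preimage $u^\prime\in u^{-1}(i(\delta))\cdot\Q$ that can be adjusted so that $u^\prime\in\Lambda_4$ after multiplying $n$ by a suitable integer (using that $u$ has finite cokernel on $A^4$). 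Thus $i(\Sigma_0^\prime)$ is contained in the set of division points of the polarized isogeny orbit of $\Lambda_4$ inside $\mathfrak{A}_{4g}$.

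Consequently $i(Y)\cap\Sigma_0^{\dagger}$ is Zariski dense in $i(Y)$, where $\Sigma_0^{\dagger}$ denotes the $\Sigma_0$ of Theorem~\ref{AndrePinkFinitelyGeneratedSubgroup} built from $(A^4,\lambda^{\boxtimes 4})$ and $\Lambda_4$. Applying that theorem (which we may invoke under the hypothesis that Conjecture~\ref{AndrePinkConjecture} holds for all $g$), we conclude that $i(Y)$ is weakly special in $\mathfrak{A}_{4g}$. Since $i$ is a Shimura immersion and $i(\mathfrak{A}_g)$ is itself weakly special in $\mathfrak{A}_{4g}$, the geometric description of weakly special subvarieties (Proposition~\ref{DescriptionWeaklySpecial}) applied to both $\mathfrak{A}_g$ and $\mathfrak{A}_{4g}$ shows that $Y=i^{-1}(i(Y))$ is weakly special in $\mathfrak{A}_g$.

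The main technical obstacle is the orbit containment step: one has to make sure the ``division'' aspect is preserved, i.e.\ that after multiplying by $u$ one can still realise $n\cdot i(t)$ as lying in $(f^4\circ u)(\Lambda_4)$ rather than merely in its $\Q$-span. This is handled by noting that $u$ has a quasi-inverse in $\End(A^4)\otimes\Q$ with bounded denominator and absorbing that denominator into the integer $n$, which is harmless for defining division points.
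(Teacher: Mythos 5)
Your proposal is correct and follows essentially the same route as the paper: embed $\mathfrak{A}_g$ into $\mathfrak{A}_{4g}$ via the fourth power, set $\Lambda_4 := \End(A^4)\cdot i(\Lambda)$, use Zarhin's trick to show $i(\Sigma_0')$ lands in the division set of the polarized isogeny orbit of $\Lambda_4$, and then invoke Theorem~\ref{AndrePinkFinitelyGeneratedSubgroup}. The paper states the orbit containment without justification, whereas you spell out the needed device (a quasi-inverse $v/m$ of the Zarhin endomorphism $u$ in $\End(A^4)\otimes\Q$, absorbing the denominator $m$ into the integer $n$), which is the right explanation; the only small wobble is the phrase ``finite cokernel'' — $u$ is an isogeny of $A^4$, hence surjective with finite kernel, and what you actually use is the existence of $v\in\End(A^4)$ with $u\circ v=[m]$.
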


\end{document}